\documentclass[11pt]{amsart}
 
\theoremstyle{plain}

\usepackage{amsmath,amssymb,amsfonts}
\usepackage{enumerate,comment}
\usepackage{amscd, amssymb, latexsym, amsmath, amscd,xcolor}
\usepackage[all]{xy}
\usepackage{pb-diagram}
\usepackage{picture}
\usepackage{graphicx,tikz}
\usepackage{multicol,lipsum}

\pagestyle{myheadings}
\newtheorem{thm}[equation]{Theorem}
\newtheorem{prop}[equation]{Proposition}
\newtheorem{cor}[equation]{Corollary}

\newtheorem{lemma}[equation]{Lemma}

\newtheorem{definition}[equation]{Definition}

\numberwithin{equation}{section}

\newcommand{\Q}{\mathbb Q}
\newcommand{\Z}{\mathbb Z}
\newcommand{\R}{\mathbb R}
\newcommand{\C}{\mathbb C}

\newcommand{\F}{\mathbb F}

\newcommand{\A}{\mathbb A}

\def\Aut{{\rm Aut}}

\def\SL{{\rm SL}}

\def\GSp{{\rm GSp}}

\def\PGSp{{\rm PGSp}}

\def\PGSO{{\rm PGSO}}

\def\Spin{{\rm Spin}}
\def\Sp{{\rm Sp}}
\def\Spin{{\rm Spin}}

\def\GL{{\rm GL}}
\def\PGL{{\rm PGL}}
\def\GSO{{\rm GSO}}

\def\SO{{\rm SO}}

\def\Sp{{\rm Sp}}

\usepackage[all]{xy}

\def\A{{\mathbb A}}

\def\R{{\mathbb R}}

\def\Z{{\mathbb Z}}

\def\C{{\bf C}}

\def\C{{\mathbb C}}

\setlength{\oddsidemargin}{0.2in}
\setlength{\evensidemargin}{0.2in}
\setlength{\textwidth}{6.1in}
\setlength{\unitlength}{2.5cm}

\begin{document}

\title[Adjoint Lifting for $\GL_3$]{Triality and
Adjoint lifting for $\GL_3$} 
 \author{Wee Teck Gan}
 
 \address{Department of Mathematics, National University of Singapore, 10 Lower Kent Ridge Road
Singapore 119076} 
\email{matgwt@nus.edu.sg}

 \subjclass[2010]{Primary 11F70, Secondary 22E50}
 
 \dedicatory{
In memory of \\
Benedict H. Gross \\
 (June 22, 1950- Dec 19, 2025)
}

\maketitle

\section{\bf Introduction} 
This paper and its prequel \cite{ChG} aim  to highlight some implications of the principle of triality in the Langlands program. 
The principle of triality refers to the fact that the (split) simply-connected $\Spin_8$ and the adjoint $\PGSO_8$  possess an  order 3 outer automorphism $\theta$.  In \cite{ChG}, by composing known cases of Langlands functorial lifting with the triality automorphism,
 we showed  that one can obtain new and dramatically different instances of functorial lifting.    In the present  paper, we shall consider the implication of  triality  in the theory of twisted endoscopy. 
\vskip 5pt

After recalling some necessary background on the triality automorphism in \S \ref{S:triality},
 we determine the twisted endoscopic groups associated to the twisted space $(\PGSO_8, \theta)$ in  \S \ref{S:endoscopic}.
It turns out that the twisted endoscopic groups of $(\PGSO_8, \theta)$ are  one of the following: the exceptional group ${\rm G}_2$, $\SL_3$ and $\SO_4$.
 For the case of $H = \SL_3$, for example, the associated map of dual groups is the unique lift of the adjoint representation
\[  \tilde{{\rm Ad}}:  H^{\vee} =  \PGL_3(\C) \longrightarrow \Spin_8(\C).  \]
 Indeed, $\tilde{\rm Ad}$ fits into the folllowing composite of morphisms of dual groups:
  \begin{equation}  \label{E:seq}
   \begin{CD}
 \GL_3(\C) @>>> \PGL_3(\C) @>\tilde{{\rm Ad}}>> \Spin_8(\C) @>>> \SO_8(\C)  @>{\rm std}>> \GL_8(\C), \end{CD} \end{equation}
 which is nothing but the adjoint representation of $\GL_3$ on its semisimple Lie algebra $\mathfrak{sl}_3$. 
 \vskip 5pt
 
 Using the stable twisted trace formula established by  Moeglin and Waldspurger, one would thus expect to show the 
 (weak) twisted endoscopic lifting from  $\SL_3$ to $\PGSO_8$.  On the other hand,  the functorial lifting associated to the first and third arrows are simply the pullbacks of automorphic forms from $\GL_3$ to $\SL_3$ and from $\PGSO_8$ to $\SO_8$ respectively, while  the functorial lifting associated to the standard representation of $\SO_8$ is known  by \cite{A, CKPSS}. Hence, by combining these known functorial liftings with the twisted endoscopic lifting induced by $\tilde{{\rm Ad}}$, one would be able to show the (weak) adjoint lifting from $\GL_3$ to $\GL_8$.   Without analyzing the stable trace formula too deeply, we carry out this strategy in this paper and  show  the anticipated adjoint lifting under a mild local  condition:
\vskip 5pt

\begin{thm}  \label{T:intro2}
Assume that $\pi$ is a cuspidal representation of $\GL_3$ over a number field, such that for some finite place $v_0$, the local component $\pi_{v_0}$ is a discrete series representation.  Then the weak adjoint lifting ${\rm Ad}(\pi)$ exists as an automorphic representation of $\GL_8$. 
\end{thm}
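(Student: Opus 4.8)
\medskip
\noindent\emph{Sketch of proof.} The plan is to produce ${\rm Ad}(\pi)$ by transporting $\pi$ successively along the four functorial lifts underlying the factorisation \eqref{E:seq}, i.e.\ along $\GL_3 \to \SL_3 \to \PGSO_8 \to \SO_8 \to \GL_8$. The two outer arrows are pullback of automorphic forms (from $\GL_3$ to $\SL_3$, i.e.\ restriction, and from $\PGSO_8$ to $\SO_8$); the last arrow is the standard functorial lift of $\SO_8$, known by \cite{A, CKPSS}; and the only substantial step is the middle one, the twisted endoscopic transfer from $\SL_3$ to $\PGSO_8$ along $\tilde{{\rm Ad}}$.

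First I would descend $\pi$ to $\SL_3$. The restriction of $\pi$ to $\SL_3(\A)$ is a finite direct sum of irreducible cuspidal automorphic representations; choose a summand $\sigma$. Since a discrete series representation of $\GL_3(F_{v_0})$ restricts to a sum of discrete series representations of $\SL_3(F_{v_0})$, the component $\sigma_{v_0}$ is again a discrete series representation; and at almost every place $v$ the unramified Langlands parameter of $\sigma_v$ is $\bar{c}_v \in \PGL_3(\C) = \widehat{\SL_3}$, the image of the Satake parameter $c_v(\pi) \in \GL_3(\C)$ of $\pi_v$.

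The heart of the proof is the twisted endoscopic lift along $\tilde{{\rm Ad}}\colon \PGL_3(\C) = \widehat{\SL_3} \to \Spin_8(\C) = \widehat{\PGSO_8}$, for which $\SL_3$ is the twisted endoscopic group of $(\PGSO_8, \theta)$ singled out in \S\ref{S:endoscopic}; here I would invoke the stable $\theta$-twisted trace formula for $\PGSO_8$ of Moeglin and Waldspurger. I would choose a decomposable test function $f = \otimes_v f_v$ on $\PGSO_8(\A)$ with: $f_{v_0}$ transferring, under the twisted endoscopic transfer for $\SL_3 \hookrightarrow (\PGSO_8,\theta)$, to a pseudocoefficient of the discrete series $\sigma_{v_0}$; at the remaining ramified finite places, $f_v$ transferring to a (suitably truncated) matrix coefficient of $\sigma_v$; and at all remaining, unramified places, $f_v$ the unit of the spherical Hecke algebra, together with finitely many Hecke operators to separate near-equivalence classes. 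Because $f_{v_0}$ is a twisted analogue of a pseudocoefficient, the geometric side of the $\theta$-twisted trace formula collapses to $\theta$-twisted elliptic terms, whose stabilisation feeds into the stable elliptic terms of the ordinary trace formulas of $\SL_3$, ${\rm G}_2$ and $\SO_4$ evaluated against matching functions. On the $\SL_3$ side this stable elliptic contribution is nonzero, by the defining property of the pseudocoefficient of $\sigma_{v_0}$, the cuspidality of $\sigma$ and linear independence of characters; and the ${\rm G}_2$ and $\SO_4$ contributions are separated off because their unramified parameters are incompatible with those forced by $\tilde{{\rm Ad}}$ at the spherical places. Hence the $\theta$-twisted discrete spectral side for $\PGSO_8$ is nonzero, and after discarding the continuous (Eisenstein) contributions --- which is possible precisely because of the discrete series place $v_0$ --- one extracts a $\theta$-stable discrete automorphic representation $\Pi$ of $\PGSO_8(\A)$ whose Satake parameter at almost every $v$ is $\tilde{{\rm Ad}}(\bar{c}_v) \in \Spin_8(\C)$.

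Finally, pulling $\Pi$ back along $\SO_8 \to \PGSO_8$ yields an automorphic representation of $\SO_8(\A)$ whose parameter at almost every $v$ is the image of $\tilde{{\rm Ad}}(\bar{c}_v)$ under $\Spin_8(\C) \to \SO_8(\C)$ --- that is, the $8$-dimensional orthogonal representation ${\rm Ad}$ of $\GL_3(\C)$ on $\mathfrak{sl}_3$ evaluated at $c_v(\pi)$ --- and applying the functorial lift $\SO_8 \to \GL_8$ of \cite{A, CKPSS} then produces an automorphic representation of $\GL_8(\A)$ with Satake parameter ${\rm Ad}(c_v(\pi))$ at almost every $v$, which is by definition the weak lift ${\rm Ad}(\pi)$. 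The main obstacle is the non-vanishing of the twisted transfer in the middle step: one must both guarantee that $\sigma$ contributes nontrivially to the elliptic, hence discrete, part of the $(\PGSO_8,\theta)$-twisted trace formula --- which is exactly what the discrete series hypothesis at $v_0$ secures, by making pseudocoefficients available --- and control the interference coming from the (in general non-singleton) $\SL_3$-packet of $\sigma$ and from the disconnectedness phenomena inherent in triality.
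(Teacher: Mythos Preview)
Your overall strategy matches the paper's, but there is a genuine gap in how you dispose of the $G_2$ and $\SO_4$ contributions. You claim their unramified parameters are ``incompatible'' with those forced by $\tilde{\rm Ad}$; this is false. The three endoscopic groups $G_2$, $\SL_3$, $\SO_4$ have dual groups sitting in $\Spin_8(\C)$ as centralizers of various $s \cdot \theta^{\vee}$, and the Lie algebra computation in \S\ref{SS:Lie alg} shows that all three share the \emph{same} maximal torus $(T^{\vee})^{\theta^{\vee}} \subset \Spin_8(\C)$. Hence every Satake parameter arising from $\SL_3$ via $\tilde{\rm Ad}$ is equally a Satake parameter arising from $G_2$ (and from $\SO_4$), and Hecke operators cannot separate them. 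The paper avoids this by a local maneuver you omit: by Proposition~\ref{P:lt}(i), the cuspidal transfer
\[
 {\rm trans}: \bigoplus_{\mathbb{O}} \mathcal{I}^{\theta}_{cusp}(\PGSO^{\mathbb{O}}_8) \longrightarrow \mathcal{SI}_{cusp}(G_2) \oplus \mathcal{SI}_{cusp}(\SO_4) \oplus \mathcal{SI}_{cusp}(\SL_3)
\]
is an \emph{isomorphism}, so one may choose $f_{v_0}$ cuspidal with ${\rm trans}(f_{v_0}) = (0,0,f_{\pi_{v_0}})$, i.e.\ transferring to the pseudocoefficient on $\SL_3$ but to \emph{zero} on $G_2$ and $\SO_4$. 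This kills those terms outright in the stable twisted trace formula~(\ref{E:stf}); your Hecke argument cannot.

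Two further omissions. First, you ignore K-forms: the isomorphism above and the stable twisted trace formula both require summing over the inner forms $\PGSO_8^{\mathbb{O}}$ indexed by octonion $k$-algebras $\mathbb{O}$ (\S\ref{SS:global-K}), and the resulting $\Pi_{\mathbb{O}}$ may well live on a non-split form---so the final transfer to $\GL_8$ needs \cite{CZ} or \cite{CFK}, not only \cite{A,CKPSS}. Second, the nonvanishing of the $\SL_3$ side after pullback by ${\rm trans}_{\SL_3}$ is not automatic from the pseudocoefficient alone: since ${\rm trans}_{\SL_3}$ is not surjective (Corollary~\ref{C:image}), one needs Lemma~\ref{L:nonzero} to ensure the stable character of the packet does not vanish on the image.
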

\vskip 10pt

 We recall that the adjoint lifting for $\GL_2$ is a classical result of Gelbart and Jacquet \cite{GJ}.
In fact, the local condition in the theorem can be removed by a more extensive investigation of the stable twisted trace formula, one which involves the consideration of the other twisted endoscopic groups of $(\PGSO_8, \theta)$  and hence will not be fully pursued  here.
 We shall however sketch  in \S \ref{SS:general} the argument for  the general case, punting some of the technical details  to 
 an  ongoing joint work  with S.W. Shin and Y. Takanashi,  the goal of which is to obtain an endoscopic classification for the group $G_2$ via the twisted endoscopy for $(\PGSO_8, \theta)$.   For the rest of the introduction, we shall proceed assuming that  Theorem \ref{T:intro2} is available without the local condition. 
\vskip 5pt

As a second result of this paper, we analyze the possible shapes of the isobaric decomposition of the automorphic representation ${\rm Ad}(\pi)$ supplied by Theorem \ref{T:intro2}; this is often referred to as a {\it cuspidality criterion} for the adjoint lifting. The results, contained in Theorems \ref{T:AI1}, \ref{T:AI2}, \ref{T:selfdual} and \ref{T:cuspidality}, can be summarized as follows:
\vskip 5pt

\begin{thm} \label{T:intro3}
Let $\pi$ be a unitary cuspidal automorphic representation of $\GL_3$ and assume that ${\rm Ad}(\pi)$ has been constructed as  the composite of the Langlands functorial lifting given in (\ref{E:seq}), as discussed above. 
Then ${\rm Ad}(\pi)$ is an isobaric sum of cuspidal representations with unitary central characters. Moreover, ${\rm Ad}(\pi)$  is  self-dual of orthogonal type with trivial central character.   More precisely, one has:
\vskip 5pt
\noindent (i) ${\rm Ad}(\pi)$ contains a cuspidal $\GL_1$-summand if and only if $\pi$ is obtained by automorphic induction from a cyclic cubic field extension. In this case, then isobaric deocmposition of ${\rm Ad}(\pi)$ is of type $1+1 + 3 +3$  or $1^8$, depending on whether $\pi$ can be obtained as an automorphic induction from one or four cyclic cubic extensions.
\vskip 5pt

\noindent (ii) ${\rm Ad}(\pi)$ contains a cuspidal $\GL_2$-summand if and only if $\pi$ is obtained by automoprhic induction from a non-cyclic cubic field extension. In this case, the isobaric decomposition of ${\rm Ad}(\pi)$ is of type $2+6$, $2+3+3$ or $2^4$, depending on whether $\pi$ can be obtained as an automorphic induction from one (for the first two cases) or four (for the last case) non-cyclic cubic extensions.
\vskip 5pt

\noindent (iii) Assume that $\pi$ is not an automorphic induction from any cubic field extension, so that ${\rm Ad}(\pi)$ does not contain any cuspidal $\GL_1$- or $\GL_2$-summands.
Then ${\rm Ad}(\pi)$ contains a cuspidal $\GL_3$-summand if and only if $\pi$ is a twist (by an automorphic character) of a self-dual cuspidal representation. In this case, the isobaric   decomposition of ${\rm Ad}(\pi)$ is of type $3+5$.
\vskip 5pt

\noindent (iv) Assume that we are not in the setting of (i), (ii) or (iii). 
Then one of the following holds:
\vskip 5pt
\begin{itemize}
\item[(a)] ${\rm Ad}(\pi)$ is the sum of two self-dual cuspidal $\GL_4$-summands of orthogonal type and nontrivial central character; this occurs precisely when there is a quadratic extension $K/F$ such that the base change $BC_{K/F}(\pi)$ can be obtained as an automorphic induction, in which case it is an automorphic induction from 4 different noncyclic cubic extensions of $K$.
\vskip 5pt

\item[(b)] ${\rm Ad}(\pi)$ is cuspidal; this occurs when the base change of $\pi$ to any quadratic extension $K/F$ is not an automorphic induction.
\end{itemize}
\vskip 5pt

Except in (i), ${\rm Ad}(\pi)$ is a discrete orthogonal L-parameter, i.e. it is the functorial lift of  a globally generic cuspidal automorphic representation of the split group $\SO_8$.  Moreover, 
\[ {\rm ord}_{s=1} L^S( s, {\rm Ad}(\pi), \wedge^3) = {\rm ord}_{s=1} L^S(s, {\rm Ad}(\pi), {\rm Sym}^2) < 0,  \]
so that the order of poles at $s=1$ of $L^S(s, {\rm Ad}(\pi), \wedge^3)$  (which is an L-function of Langlands-Shahidi type) is equal to the number of summands in the isobaric decomposition of ${\rm Ad}(\pi)$.

\end{thm}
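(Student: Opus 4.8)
The plan is to organize everything around the isobaric identity $\pi \boxtimes \pi^{\vee} = {\rm Ad}(\pi) \boxplus \mathbf{1}$ of automorphic representations of $\GL_9$, which holds because the two sides have the same Satake parameters at almost all finite places --- the conjugation action of $\PGL_3(\C)$ on $\mathfrak{gl}_3 = \mathfrak{sl}_3 \oplus \C$ --- and isobaric representations are pinned down by their partial $L$-functions. First I would record the ``soft'' structure of ${\rm Ad}(\pi)$: since the adjoint map $\PGL_3(\C) \to \SO_8(\C)$ preserves the Killing form, has image in the \emph{special} orthogonal group, and is irreducible, ${\rm Ad}(\pi)$ is self-dual of orthogonal type with trivial central character. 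As $\pi$ is unitary cuspidal, $L(s,\pi\times\pi^{\vee})$ is holomorphic and non-vanishing for $\Re(s)>1$ with a \emph{simple} pole at $s=1$; the classical analysis of isobaric sums (Jacquet--Shalika) then forces every cuspidal constituent of ${\rm Ad}(\pi)$ to be unitary and shows that $\mathbf{1}$ is not one of them, the single pole of $L(s,\pi\times\pi^{\vee})$ being absorbed by the explicit summand $\mathbf{1}$ of $\pi\boxtimes\pi^{\vee}$.

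The engine for (i)--(iii) is a pole-detection principle. For a cuspidal $\sigma$ of $\GL_m$ with $m\le 3$ (a nontrivial Hecke character when $m=1$), the equality $L(s,\pi\times\pi^{\vee}\times\sigma^{\vee}) = L(s,\sigma^{\vee})\cdot L(s,{\rm Ad}(\pi)\times\sigma^{\vee})$ together with the holomorphy of $L(s,\sigma^{\vee})$ shows that $\sigma$ is a constituent of ${\rm Ad}(\pi)$ iff $\pi$ is a constituent of the isobaric representation $\pi\boxtimes\sigma$ of $\GL_{3m}$. For $m=1$ this says $\pi\cong\pi\otimes\sigma$; comparing central characters forces $\sigma^3=\mathbf{1}$, and by Arthur--Clozel this happens exactly when $\pi = {\rm AI}_{E/F}(\mu)$ for the cyclic cubic field $E/F$ cut out by $\sigma\neq\mathbf{1}$, and running Mackey through ${\rm AI}_{E/F}(\mu)\boxtimes{\rm AI}_{E/F}(\mu^{-1})$ yields the shapes $1+1+3+3$ and (when $\pi$ is induced from all four cyclic cubic subextensions of a $(\Z/3)^2$-extension) $1^8$. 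For $m=2$ the Kim--Shahidi $\GL_2\times\GL_3$-lift makes $\pi\boxtimes\sigma$ automorphic on $\GL_6$, and the Rankin--Selberg analysis shows that $\pi$ occurs in it precisely when $\pi$ is automorphically induced from a \emph{non-cyclic} cubic field $E/F$; the relevant $\GL_2$-constituent is then the dihedral representation attached to the two-dimensional irreducible representation of $\Gal(\widetilde E/F)=S_3$, which is orthogonal with non-trivial quadratic central character, and the same induction bookkeeping produces $2+6$, $2+3+3$, and $2^4$. For (iii) one uses ${\rm Ad}(\pi\otimes\chi)={\rm Ad}(\pi)$: in the absence of $\GL_1$- and $\GL_2$-constituents a cuspidal $\GL_3$-constituent of ${\rm Ad}(\pi)$ must be self-dual, hence orthogonal, i.e. of $\SO_3=\PGL_2$-type, so after a quadratic twist $\pi = {\rm Sym}^2\tau$ for a cuspidal $\tau$ of $\GL_2$, and the branching $\mathfrak{sl}_3 = V_4\oplus V_2$ of $\mathfrak{sl}_3$ under the principal $\SL_2\to\SL_3$ gives ${\rm Ad}(\pi) = \big({\rm Sym}^4\tau\otimes\omega_\tau^{-2}\big)\boxplus\big({\rm Sym}^2\tau\otimes\omega_\tau^{-1}\big)$, of type $3+5$, with both summands cuspidal (${\rm Sym}^3$ and ${\rm Sym}^4$ by Kim--Shahidi and Kim, ${\rm Sym}^2$ by Gelbart--Jacquet) unless $\tau$ is dihedral, tetrahedral or octahedral --- precisely the cases, excluded in (iii), in which $\pi$ is an automorphic induction from a cubic field.

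For part (iv), ${\rm Ad}(\pi)$ now has no cuspidal constituent of dimension $\le 3$, so by a dimension count it is either cuspidal (case (b)) or a sum $\sigma_1\boxplus\sigma_2$ of two cuspidal $\GL_4$'s. Self-duality of ${\rm Ad}(\pi)$ leaves the possibilities that $\sigma_2\cong\sigma_1^{\vee}$ with $\sigma_1$ non-self-dual, or $\sigma_1,\sigma_2$ both self-dual; its orthogonality rules out two symplectic summands or one symplectic and one orthogonal; a $\GL_4$-summand of trivial central character --- of $\SO_4=(\SL_2\times\SL_2)/\mu_2$-type, hence a Rankin--Selberg product $\tau\boxtimes\tau'$ --- would, via the induced factorization of $\pi\boxtimes\pi^{\vee}$, throw $\pi$ back into (i)--(iii); and a non-self-dual $\sigma_1$ (forcing $\sigma_2\cong\sigma_1^{\vee}$, i.e. a polarization $\C^8 = W\oplus W^*$ of $\mathfrak{sl}_3$ stable under the image of the parameter of $\pi$) is impossible since $\mathfrak{sl}_3$ is an irreducible $\PGL_3(\C)$-module: the Zariski closure of that image is then a proper reductive subgroup, and running through the reductive subgroups of $\PGL_3(\C)$ (tori, their normalizers, $\SO_3$, the reducible ones, the finite primitive groups) lands $\pi$ in (i)--(iii) or makes it non-cuspidal; finally $\sigma_1\cong\sigma_2$ is ruled out by comparing pole orders, leaving exactly case (a): two distinct orthogonal self-dual $\GL_4$-summands with equal non-trivial quadratic central character (their product being the trivial central character of ${\rm Ad}(\pi)$). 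Thus outside (i) the constituents of ${\rm Ad}(\pi)$ are cuspidal, self-dual of orthogonal type, and pairwise distinct, so ${\rm Ad}(\pi)$ is a discrete orthogonal parameter, which by the converse theorem and the backward descent to the split $\SO_8$ (Cogdell--Kim--Piatetski-Shapiro--Shahidi, Ginzburg--Rallis--Soudry) is the functorial lift of a globally generic cuspidal representation of that group. The $L$-function identity then follows from multiplicativity of Langlands--Shahidi $L$-functions along the isobaric decomposition plus a plethysm computation: each orthogonal self-dual $\sigma_i$ contributes one simple pole to $L^S(s,{\rm Ad}(\pi),{\rm Sym}^2)$ from its invariant quadratic form, and --- checking the shapes $3+5$, $4+4$, $8$ (and those in (ii)--(iii)) case by case --- exactly one to $L^S(s,{\rm Ad}(\pi),\wedge^3)$ as well, coming from $\wedge^3\sigma_i\supset\mathbf{1}$ when $\dim\sigma_i=3$, from $\wedge^2\sigma_i\boxtimes\sigma_j\supset\mathbf{1}\Leftrightarrow\sigma_j\hookrightarrow\wedge^2\sigma_i$, and so on, all genuine cross terms $\sigma_i\boxtimes\sigma_j$ ($i\ne j$) and triples being pole-free because the $\sigma_i$ are distinct --- so that ${\rm ord}_{s=1}L^S(s,{\rm Ad}(\pi),\wedge^3) = {\rm ord}_{s=1}L^S(s,{\rm Ad}(\pi),{\rm Sym}^2)$ equals minus the number of summands, hence is $<0$.

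The main obstacle is the exhaustive case analysis behind (ii) and (iv): showing that no isobaric shape other than those listed can occur requires combining the pole-detection criterion with the precise Mackey decompositions of ${\rm AI}_{E/F}(\mu)\boxtimes{\rm AI}_{E/F}(\mu^{-1})$ for both cyclic and non-cyclic cubic $E$, and then disentangling the $\GL_4+\GL_4$ configurations using only orthogonality, the classification of reductive subgroups of $\PGL_3(\C)$, and central-character constraints; by contrast, everything else is an assembly of the known functorial lifts (symmetric powers of $\GL_2$, the $\GL_2\times\GL_3$-product, cyclic and non-cyclic cubic automorphic induction) with the analytic theory of Rankin--Selberg and Langlands--Shahidi $L$-functions.
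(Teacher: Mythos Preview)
Your treatment of (i) and (ii) matches the paper's: the same pole-detection identity, Arthur--Clozel for the cyclic cubic case, and the Ramakrishnan--Wang cuspidality criterion for $\pi\boxtimes\tau$ on $\GL_6$ (which you should cite explicitly for the implication that $\pi\subset\pi\boxtimes\sigma$ forces $\pi$ to be a non-cyclic cubic automorphic induction).

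There is a genuine gap in (iii). Your dimension count does show that a cuspidal $\GL_3$-constituent $A$ of ${\rm Ad}(\pi)$ is self-dual. But ``hence orthogonal, i.e.\ of $\SO_3=\PGL_2$-type'' only gives $\omega_A^2=1$, not $\omega_A=1$; and the clause ``so after a quadratic twist $\pi={\rm Sym}^2\tau$'' is a non-sequitur --- you have constrained $A$, not $\pi$. Your engine for $m=3$ would need $\pi\boxtimes A$ to be automorphic on $\GL_9$, which is unknown. The paper's substitute is the triality identity ${\rm Ad}\otimes{\rm Ad}\cong{\rm Ad}\oplus\wedge^3{\rm Ad}$ (coming from $\rho_1\otimes\rho_2\cong\rho_3\oplus\wedge^3\rho_3$ for $\Spin_8$), which yields $L^S(s,{\rm Ad}(\pi)\times{\rm Ad}(\pi))=L^S(s,{\rm Ad}(\pi))\cdot L^S(s,{\rm Ad}(\pi),\wedge^3)$. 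Expanding $\wedge^3(A\boxplus B)$ and invoking Kim--Shahidi on the simplicity of poles of $L^S(s,A\times B,{\rm std}\times\wedge^2)$ forces $\omega_A=1$, hence $A={\rm ad}_2(\tau)$; then $L^S(s,(\pi\boxtimes\tau)\times(\pi\boxtimes\tau)^\vee)$ acquires a double pole, so $\pi\boxtimes\tau$ is non-cuspidal on $\GL_6$, and Ramakrishnan--Wang gives that $\pi$ is a twist of ${\rm ad}_2(\tau)$.

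The gap in (iv) is structural. Your exclusion of a non-self-dual $\sigma_1$ (via the Zariski closure of ``the image of the parameter of $\pi$'' in $\PGL_3(\C)$) and of a trivial-central-character $\GL_4$-summand (via a $\tau\boxtimes\tau'$ factorization ``throwing $\pi$ back into (i)--(iii)'') both presuppose a global $L$-parameter for $\pi$ and the ability to read automorphic induction or self-duality off its image; neither is available. The paper instead re-uses the $\wedge^3$ identity: matching the double pole of $L^S(s,{\rm Ad}(\pi)\times{\rm Ad}(\pi))$ against the expansion of $\wedge^3(A\boxplus B)$ forces $A\not\cong B$ together with $B^\vee\subset\wedge^2 A$ and $A^\vee\subset\wedge^2 B$. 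Then Asgari--Raghuram's cuspidality criterion for the exterior-square lift of $\GL_4$ (enumerating how $\wedge^2 A$ can contain a cuspidal $\GL_4$-summand) shows one of $A,B$ is invariant under a nontrivial quadratic twist $\omega_{K/k}$; base-changing to $K$ lands in case (c) of (ii), from which one reads off $A\cong{\rm AI}_{K/k}(\rho_E)$, $B\cong{\rm AI}_{K/k}(\rho_{E_1})$, self-duality, orthogonality, and $\omega_A=\omega_B=\omega_{K/k}\ne 1$. No global parameter is invoked.
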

\vskip 5pt

\noindent The proof of this theorem relies crucially on the functorial lifting results  of Gelbart-Jacquet \cite{GJ}, Arthur-Clozel \cite{AC}, 
Ramakrishnan \cite{Ra1}, Kim-Shahidi \cite{KS1} and  Kim \cite{Ki}, as well as the cuspidality criteria for some of these liftings due to Ramakrishnan-Wang \cite{RW} and Asgari-Raghuram \cite{AR}.
 
\vskip 5pt

It is of course of great interest to determine precisely the fibers and image of the adjoint lifting. In particular, the question of the fibers of the adjoint lifting is closely related to the question of multiplicities and rigidity for the cuspidal spectrum of $\SL_3$. 
It has been shown by Blasius \cite{B}  that the multiplicity-one property and the rigidity of L-packets both fail for $\SL_n$ for $n \geq 3$. 
In other words, there are cuspidal representations of $\SL_n$ ($n \geq 3$) with multiplicity $>1$, and there are inequivalent cuspidal L-packets which are nearly equivalent.  
Lapid  \cite{L1} extended this line of study with a more quantitative analysis, especially for the endoscopic L-packets of $\SL_n$ obtained by endoscopic transfer ($=$ automorphic induction) from elliptic tori. In particular, he showed that in the context of $\SL_3$, the so-called {\it global multiplicity} of a cuspidal endoscopic L-packet is either $1$ or $2$ (see \cite[\S2, Pg 162]{L1} for the definition and \cite[Theorem 4]{L1} for the result). 
 For example, in the context of Theorem \ref{T:intro3}(i), the degenerate case where ${\rm Ad}(\pi)$ is the sum of 8 cubic characters coincides precisely with the case where the global multiplicity is $2$. 
\vskip 5pt

Let us conclude this introduction by mentioning two applications of the adjoint lifiting for $\GL_3$. The first is an application to the strong Artin conjecture for certain 3-dimensional Galois representations. These 3-dimensional Galois representations were considered by Lapid \cite[\S 5]{L2} and are analogs of the tetrahedral 2-dimensional Galois representations. Recall that as a consequence of his proof of base change for $\GL_2$ (and the adjoint lifting of $\GL_2$), Langlands showed that  tetrahedral 2-dimensional Galois representations are automorphic, i.e. the strong Artin conjecture holds for them.   In \cite[Prop. 2]{L2}, Lapid showed that  for any prime power $q$, the $q$-dimensional tetrahedral Galois representations (as defined in \cite[\S 5]{L2}) are automorphic, {\em provided that the adjoint lifting  for $\GL_q$ is known}.
Hence, one consequence of Theorem \ref{T:intro2} is:
\vskip 5pt

\begin{thm} \label{T:artin1}
The strong Artin conjecture holds for   3-dimensional tetrahedral Galois representations (as deifned in \cite[\S 5]{L2}).
\end{thm} 
\noindent In \S \ref{S:artin}, we shall recall the precise definition and construction of these 3-dimensional tetrahedral Galois representations. 


\vskip 5pt

As a second application of the adjoint lifting for $\GL_3$, we show the following bound towards the Ramanujan-Petersson conjecture for $\GL_3$:
\vskip 5pt

\begin{thm} \label{T:Rama1}
Let $\pi$ be a cuspidal representation of $\GL_3$ with unitary central character and a discrete series local component over a number field $k$. At each place $v$ where $\pi_v$ is unramified, let $\lambda$ be an eigenvalue of its Satake parameter in $\GL_3(\C)$. Writing $|\lambda| = q_v^a$ for some $a \in \R$ (and with $q_v$ equal to the cardinality of the residue field of $k_v$), one has:
\[ |a| \leq  \frac{1}{4} - \frac{1}{130}. \]
\end{thm}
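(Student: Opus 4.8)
The plan is to derive Theorem~\ref{T:Rama1} formally from the existence of the adjoint lift (Theorem~\ref{T:intro2}), its isobaric structure (Theorem~\ref{T:intro3}), and the standard approximations to the Ramanujan conjecture for $\GL_n$ with $n\le 8$. First I would fix a place $v$ at which $\pi_v$ is unramified and let $\alpha_1,\alpha_2,\alpha_3\in\C^\times$ be the eigenvalues of its Satake parameter, ordered so that $|\alpha_1|\ge|\alpha_2|\ge|\alpha_3|$. Writing $|\alpha_i|=q_v^{a_i}$, the unitarity of the central character gives $a_1\ge a_2\ge a_3$ and $a_1+a_2+a_3=0$. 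Since ${\rm Ad}$ is the adjoint action of $\GL_3(\C)$ on $\mathfrak{sl}_3(\C)$, and since ${\rm Ad}(\pi)_v$ is unramified ($\pi_v$ being so), its Satake parameter has the eight eigenvalues $1$ (with multiplicity two) together with $\alpha_i/\alpha_j$ for $i\ne j$, the largest of which in absolute value is $q_v^{a_1-a_3}$.

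Next I would use Theorem~\ref{T:intro2} to realize ${\rm Ad}(\pi)$ as an automorphic representation of $\GL_8$ and Theorem~\ref{T:intro3} to write it as an isobaric sum $\sigma_1\boxplus\cdots\boxplus\sigma_r$ of cuspidal representations $\sigma_i$ of $\GL_{n_i}$ with $\sum_i n_i=8$, each $\sigma_i$ having unitary central character (hence each $\sigma_i$ a unitary cuspidal representation). At the unramified place $v$ the Satake parameter of ${\rm Ad}(\pi)_v$ is the union of those of the $(\sigma_i)_v$, so the Luo--Rudnick--Sarnak bound for $\GL_{n_i}$ — that every Satake eigenvalue of a unitary cuspidal representation of $\GL_n$ at an unramified place has absolute value $q_v^{b}$ with $|b|\le\tfrac12-\tfrac1{n^2+1}$, a quantity increasing in $n$ — yields, since each $n_i\le 8$,
\[ a_1-a_3 \;\le\; \tfrac12-\tfrac1{8^2+1} \;=\; \tfrac{63}{130}. \]

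Finally I would combine this with the two constraints on the $a_i$. From $a_1+a_2+a_3=0$ one has $3a_1=(a_1-a_2)+(a_1-a_3)$ and $-3a_3=(a_1-a_3)+(a_2-a_3)$; since $0\le a_1-a_2\le a_1-a_3$ and $0\le a_2-a_3\le a_1-a_3$, this gives $\max(a_1,-a_3)\le\tfrac23(a_1-a_3)\le\tfrac23\cdot\tfrac{63}{130}=\tfrac{21}{65}$. As every $a_i$ lies in $[a_3,a_1]$, it follows that the exponent $a$ of any Satake eigenvalue $\lambda$ of $\pi_v$ satisfies $|a|\le\tfrac{21}{65}$, and $\tfrac{21}{65}<\tfrac13$.

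The only substantive ingredient is Theorem~\ref{T:intro2} together with the structural Theorem~\ref{T:intro3}; granting those, the argument is formal, combining the off-the-shelf $\GL_n$-bounds with the elementary linear algebra of the adjoint representation. So the main obstacle is really upstream: having the adjoint lift in hand as an isobaric automorphic representation of $\GL_8$ whose cuspidal summands carry unitary central characters. I note that one gets a slightly better constant in all the cases of Theorem~\ref{T:intro3} where ${\rm Ad}(\pi)$ is not itself cuspidal on $\GL_8$ (then the summands live on $\GL_n$ with $n\le 6$), but in case (iv)(b), where ${\rm Ad}(\pi)$ is cuspidal on $\GL_8$, the bound $\tfrac{21}{65}$ coming from the $\GL_8$ Luo--Rudnick--Sarnak estimate cannot presently be improved, which is why it is stated uniformly.
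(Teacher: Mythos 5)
Your proposal is correct and follows essentially the same route as the paper: existence of ${\rm Ad}(\pi)$ on $\GL_8$ plus its isobaric decomposition into cuspidal summands with unitary central characters, the Luo--Rudnick--Sarnak bound $\tfrac12-\tfrac1{65}$ applied to the summands (worst case $\GL_8$), and the elementary manipulation of the exponents using $a_1+a_2+a_3=0$. The only cosmetic difference is that you order the $a_i$ and bound just $a_1-a_3$, whereas the paper bounds each $|a_i-a_j|$ and uses the triangle inequality on $3a_1=(a_1+a_2+a_3)+(a_1-a_2)+(a_1-a_3)$; the resulting constant $\tfrac{21}{65}$ is identical.
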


In fact, in an earlier version of this paper, the author obtained the weaker bound
\[  |a| \leq \frac{21}{65} < \frac{1}{3}. \] 
An observation communicated to us by V. Blomer allows us to strengthen the result as above. In any case, the result  relies on the general bounds of Luo-Rudnick-Sarnak \cite{LRS} and improves upon a result of Blomer-Brumley \cite{BB}, which gives $|a| \leq 5/14$  for $\GL_3$.  The Ramanujan-Petersson conjecture, of course,  asserts that $a =0$. A recent paper of LY Yang \cite{Y}, extending earlier work of Ramakrishnan \cite{Ra3} over $\Q$, shows that this holds for infinitely many places $v$ of $k$.    
\vskip 10pt

  \noindent{\bf Acknowledgments:}   We thank Gaetan Chenevier (who should have been a coauthor of this paper),  Erez Lapid, Dipendra Prasad, Sug Woo Shin  and Yugo Takanashi for helpful discussions at various stages of this work and comments on earlier drafts of this paper. In addition, we thank Henry Kim for sending us a copy of the paper \cite{KS3}, Freydoon Shahidi for discussion on the fibers of the Rankin-Selberg  lifting from $\GL_2 \times \GL_3$ to $\GL_6$, David Hansen for his catalytic question which leads to the content of Section \ref{S:Ramanujan} and the first version  of Theorem \ref{T:Rama1}, and Valentin Blomer and Farrell Brumley for their suggestions leading to the final version of Theorem \ref{T:Rama1}. 
  The author is  supported by a Tan Chin Tuan Centennial Professorship at NUS.

\vskip 10pt

\section{\bf Triality}  \label{S:triality}

 In this section, we recall some necessary background on the triality automorphism of $\Spin_8$ and $\PGSO_8$.

 \vskip 5pt

 \subsection{\bf Symmetric composition algebras and $\Spin_8$}
 Let us first recall from \cite[\S 2]{ChG} a construction of $\Spin_8$ with its triality automorphism  via the theory of symmetric composition algebras. For the definition and properties of symmetric composition algebras, the reader may consult \cite{KMRT}. 
 By \cite[Theorem 34.37]{KMRT},  there are essentially two  isomorphism classes of   split symmetric composition algebra over a field $F$.  These may be described as follows:
 \vskip 5pt
 
 \begin{itemize}
 \item  Start with the split octonion algebra $(\mathbb{O}, \cdot, N)$ over $F$, where $N$ is its norm form permitting composition: $N(x \cdot y) = N(x) \cdot N(y)$ for all $x,y \in \mathbb{O}$.
 Then one  defines a new multiplication on $\mathbb{O}$  by $x \ast y = \bar{x} \cdot \bar{y}$. The algebra $(\mathbb{O}, \ast, N)$ is  the para-octonion algebra and one still has $N( x\ast y) = N(x) \cdot N(y)$ for all $x,y \in \mathbb{O}$.  Moreover,  $\Aut(\mathbb{O}, \cdot) = \Aut(\mathbb{O}, \ast) \simeq G_2^{\mathbb{O}}$. 
 \vskip 5pt

\item Assume now that  $F$  contains a primitive 3rd root of unity $\omega$ (so ${\rm char}(F) \ne 3$).
The other split symmetric composition algebra of dimension $8$ is realised on the space $\mathbb{M}$ of trace zero $3 \times 3$ matrices with multiplication given (miraculously) by \cite[Pg. 472, (34.18)]{KMRT}
\[   x \ast y = \omega xy + (1-\omega)yx  - \frac{1}{3} T(yx), \quad \text{  for $x,y \in \mathbb{M}$,}  \]
where $T$ is the trace of a $3 \times 3$ matrix. If the characteristic polynomial of  $x \in \mathbb{M}$ is
\[ P_x(t) =  t^3  + S(x) t  - \det(x),   \]
then the quadratic form on $\mathbb{M}$ which permits composition is  $\frac{1}{3} \cdot S(x)$ \cite[Prop. 34.19]{KMRT}. The automorphism group  of $(\mathbb{M}, \ast), S$ is the group $\PGL_3 \subset \SO(\mathbb{M}, S)$ acting on $\mathbb{M}$ by conjugation. In other words, the action of $\PGL_3$ on $\mathbb{M}$ is isomorphic to the adjoint representation. 
 \end{itemize}
 Thus, over a general field $F$ with  ${\rm char}(F) \ne 3$, the $F$-forms of the above  symmetric composition algebras are classified by $H^1(F, G_2)$ and 
 $H^1(F, \PGL_3)$ respectively.

\vskip 5pt
Given   a symmetric composition algebra $(V, \ast, Q)$  of dimension $8$ over $F$, let us define
\[  \Spin(V,\ast, Q) = \{ (g_1, g_2, g_3) \in \SO(V,Q)^3:  g_1(x \ast y) = g_2(x) \ast g_3(y) \text{ for $x,y \in V$} \}. \]
Then this group is known to be isomorphic to $\Spin_8$ over $\overline{F}$. 
Moreover, the cyclic permutation of the three coordinates in $\SO(V,Q)^3$ preserves the subgroup  $\Spin(V, \ast, Q)$ and defines an order $3$ automorphism 
\[  \theta_V:  \Spin(V, \ast, Q) \longrightarrow \Spin(V, \ast, Q). \]
The fixed group of $\theta_V$ is thus the subgroup $\Aut(V,\ast, Q) \subset \SO(V,Q)$.  
Because the automorphism $\theta$ necessarily preserves the center $Z_{\Spin(V,\ast, Q)}$, it descends to an automorphism of the adjoint group $\PGSO(V,Q)$, still denoted by $\theta_V$.
 \vskip 5pt
 
 \vskip 10pt

 \subsection{\bf Splitting of $\Aut$-sequence.}
Recall that one has a short exact sequence
\begin{equation} \label{E:Aut}
  \begin{CD}
1 @>>> \PGSO_8 @>>>  \Aut(\Spin_8)  @>>>   {\rm Out}(\Spin_8) = S_3 @>>> 1. \end{CD} \end{equation}
The data of a symmetric composition algebra $(V,\ast, Q)$ thus gives a splitting of this short exact sequence over the order 3 subgroup $A_3 \subset S_3$. Since we could apply the above discussion to the two symmetric composition algebra $(\mathbb{O}, \ast, N)$ and $(\mathbb{M}, \ast, S)$ (when $F = \overline{F}$ say), we obtain two splittings of this short exact sequence over $A_3$ whose fixed subgroups are ${\rm G}_2^\mathbb{O}$ and $\PGL_3$ respectively. According to \cite[Pg. 487, Cor. 35.10 and Prop. 36.14]{KMRT}, these are the only two possible splittings (up to inner conjugation) over $\overline{F}$.  
\vskip 5pt

In particular, the embedding
\[  \PGL_3 \simeq \Aut(\mathbb{M}, \ast, S) \hookrightarrow \Spin(\mathbb{M}, \ast, S)  \]
is invariant under the action of $\theta_{\mathbb{M}}$  and is the map denoted by $\tilde{\rm Ad}$ in the introduction.
\vskip 5pt


\subsection{\bf Representations}
 By construction, the group $\Spin(V, \ast, Q)$ is equipped with  three projections  
\[  \rho_j:  \Spin(V,\ast, Q) \longrightarrow \SO(V,Q), \]
with ${\rm Ker}(\rho_j) \simeq \mu_2 \subset Z_{\Spin(V, \ast, Q)}$.  Thus one has 3 inequivalent $8$-dimensional irreducible orthogonal representations of $\Spin(V,\ast, Q)$ which are permuted by $\theta$.
Likewise,  the projection from the three $\SO(V,Q)$'s  gives 3 maps
\[  f_j:  \SO(V,Q) \longrightarrow \PGSO(V,Q), \]
which are permuted by $\theta$.
\vskip 5pt

The following lemma will only be used  much  later in the paper, but this seems a convenient place to mention it:
\vskip 5pt

\begin{lemma}. \label{L:rep}
Let
\[  \rho_1, \, \rho_2, \, \rho_3: \Spin_8(\C) \longrightarrow \SO_8(\C) \subset  \GL_8(\C) \]
be the three inequivalent 8-dimensional irreducible representations of $\Spin_8(\C)$.  
Then
\[  \rho_1 \otimes \rho_2 \simeq \rho_3 \oplus \wedge^3 \rho_3. \]
\end{lemma}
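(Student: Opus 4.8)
The claim is an identity of 8-dimensional (well, $64 = 8 \times 8$) representations of $\Spin_8(\C)$: that the tensor product $\rho_1 \otimes \rho_2$ of two of the three inequivalent vector/half-spin representations decomposes as $\rho_3 \oplus \wedge^3 \rho_3$. Since $\Spin_8$ is semisimple and we are over $\C$, everything is determined by characters/highest weights, so the plan is to verify the identity on the level of dominant weights. Concretely: fix a maximal torus and label the fundamental weights so that $\rho_1 \leftrightarrow \varpi_1$ (the vector representation) and $\rho_2 \leftrightarrow \varpi_3$, $\rho_3 \leftrightarrow \varpi_4$ are the two half-spin representations (the precise assignment is immaterial by triality — the three are permuted by the order-3 outer automorphism $\theta$, so any cyclic relabelling is legitimate). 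Then I would compute the tensor product $\varpi_1 \otimes \varpi_3$ using the standard machinery (e.g. the PRV/Klimyk formula, or simply Weyl's dimension formula plus a weight count), obtaining a decomposition into irreducibles; the point is that $\dim(\rho_1 \otimes \rho_2) = 64$, and this must be matched by $\dim \rho_3 + \dim \wedge^3 \rho_3 = 8 + 56 = 64$, so the decomposition has at most these two constituents, and one checks that $\wedge^3$ of an 8-dimensional representation of $\Spin_8$ is indeed irreducible of dimension $56$ with highest weight $\varpi_1 + \varpi_2$ (in the labelling where $\rho_3 = \varpi_4$, this is $\varpi_4 + \varpi_1$ or the appropriate triality-twist thereof).

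**Steps, in order.** First, I would set up notation: realize $\rho_1$ as the vector representation $V$ with the quadratic form $Q$, so that $\SO_8(\C) \subset \GL(V)$ and $\rho_2, \rho_3$ are the two half-spin representations $S^+, S^-$ of dimension $8$ each. Second, I would recall the classical fact about the Clifford algebra / spin representations of $\mathrm{Spin}_{2n}$: the tensor product of the vector representation with a half-spin representation decomposes, for $n = 4$, as $V \otimes S^{\pm} \cong S^{\mp} \oplus (\text{a } 56\text{-dimensional irreducible})$. This is the Clifford-multiplication map $V \otimes S^{\pm} \to S^{\mp}$ (surjective, giving the $S^{\mp}$ summand) together with its kernel, which for $\mathrm{Spin}_8$ is irreducible of dimension $56$ — this is the "Cartan component" with highest weight $\varpi_1 + \varpi_3$ (reading $S^+ = \varpi_3$). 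Third, and this is where triality does the real work, I would identify that $56$-dimensional irreducible with $\wedge^3 \rho_3$: apply the outer automorphism $\theta$ that cyclically permutes $(\rho_1, \rho_2, \rho_3)$; under $\theta$, the exterior-cube functor is intrinsic (it commutes with any automorphism), so $\wedge^3 \rho_3$ has highest weight $\theta$ applied to the highest weight of $\wedge^3 \rho_1$ — and $\wedge^3$ of the vector representation $\rho_1$ of $\SO_8$ is the familiar irreducible $\wedge^3 V$ of dimension $\binom{8}{3} = 56$ with highest weight $\varpi_3 + \varpi_4$ (i.e. $e_1 + e_2 + e_3$ in the standard $e_i$-coordinates), which under triality matches exactly the $\varpi_1 + \varpi_3$ constituent found in step two. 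Finally, I would assemble: $\rho_1 \otimes \rho_2 \cong \rho_3 \oplus (56\text{-dim irreducible}) \cong \rho_3 \oplus \wedge^3 \rho_3$.

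**Main obstacle.** The only genuinely delicate point is bookkeeping with the triality action on weights: one must be careful that the labelling of $\rho_1, \rho_2, \rho_3$ and of the fundamental weights is consistent, since $\theta$ permutes $\{\varpi_1, \varpi_3, \varpi_4\}$ while fixing $\varpi_2$, and the exterior-power operation $\wedge^3$ is only "triality-equivariant" in the sense that $\wedge^3(\rho \circ \theta) = (\wedge^3 \rho) \circ \theta$ — so the statement "$\wedge^3 \rho_3$ is the $56$-dimensional piece of $\rho_1 \otimes \rho_2$" has to be checked by transporting the classical identity $\wedge^3 \rho_1 \subset \rho_2 \otimes \rho_3$ (for the vector representation, where $\wedge^3 V$ is manifestly irreducible of dimension $56$) through $\theta$. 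An alternative, more computational route that sidesteps triality entirely: compute the character of $\rho_1 \otimes \rho_2$ directly (the $64$ weights are $\pm e_i \pm \tfrac12(e_1 + \cdots) $-type sums), subtract off the $8$ weights of $\rho_3$, and verify the remaining $56$ weights are exactly the weights of $\wedge^3 \rho_3$ with the right multiplicities; this is routine but tedious, whereas the triality argument is cleaner and fits the spirit of the paper. I expect the write-up to favor the triality argument, with the character computation mentioned as the fallback. Everything else — irreducibility of the $56$-dimensional constituent, the dimension count $64 = 8 + 56$, surjectivity of Clifford multiplication — is standard and needs no serious work.
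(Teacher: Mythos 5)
Your argument is correct, but there is nothing in the paper to compare it against: the paper states Lemma \ref{L:rep} without proof, treating it as a standard fact about the representation theory of $\Spin_8(\C)$, and only uses it later through Corollary \ref{C:rep} and the L-function factorization $L^S(s,{\rm Ad}(\pi)\times{\rm Ad}(\pi)) = L^S(s,{\rm Ad}(\pi))\cdot L^S(s,{\rm Ad}(\pi),\wedge^3)$. So your write-up supplies a justification the paper omits, and the route you choose (Clifford multiplication $V\otimes S^{\pm}\to S^{\mp}$ giving the $8$-dimensional quotient, the $56$-dimensional Cartan component as its kernel, and triality transporting the classical identity $\wedge^3 V \subset S^+\otimes S^-$ to identify that kernel with $\wedge^3\rho_3$) is a perfectly good one; the equivalent shortcut is to start from the classical $S^+\otimes S^-\simeq V\oplus\wedge^3 V$ for $D_4$ and apply the outer automorphism to permute the labels, and your character-count fallback would also settle it.

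Two small remarks. First, in your plan paragraph you write that $\wedge^3$ of an $8$-dimensional representation has highest weight $\varpi_1+\varpi_2$; this is a slip ($\varpi_2$ is the triality-fixed adjoint node, and $\varpi_1+\varpi_2$ is the highest weight of a $160$-dimensional representation), but your detailed steps have it right: $\wedge^3 V$ has highest weight $e_1+e_2+e_3=\varpi_3+\varpi_4$, and its triality translates are $\varpi_1+\varpi_3$ and $\varpi_1+\varpi_4$, all of dimension $56$. Second, the bookkeeping you flag as the main obstacle (which power of $\theta$ to apply, i.e.\ whether one lands on $\varpi_1+\varpi_3$ or $\varpi_1+\varpi_4$) is actually harmless: the statement of the lemma is invariant under any relabeling of $\rho_1,\rho_2,\rho_3$ (the left side is symmetric in $\rho_1,\rho_2$, and the triality orbit covers the cyclic relabelings), so whichever direction of the cycle you take, you prove an instance of the lemma that is equivalent to all the others.
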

\vskip 5pt

Now consider, for each $j$, the composite
\[ \begin{CD}
  \GL_3(\C) @>>> \PGL_3(\C) @>\tilde{\rm Ad}>> \Spin_8(\C) @>\rho_j>>  \SO_8(\C) \subset \GL_8(\C). 
 \end{CD} \]
 Since $\tilde{\rm Ad}$ is invariant under the action of $\theta_{\mathbb{M}}$, this composite is independent of $j$ as a representation of $\GL_3(\C)$. As mentioned in the introduction, it is simply the adjoint representation ${\rm Ad}$ of $\GL_3(\C)$. 
 The above lemma thus implies the following corollary:
 \vskip 5pt
 
 \begin{cor}  \label{C:rep}
 One has an isomorphism 
 \[  {\rm Ad} \otimes {\rm Ad} \simeq {\rm Ad} \oplus \wedge^3 {\rm Ad}.\]
 of representations of $\GL_3(\C)$.
\end{cor}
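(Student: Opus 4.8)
The plan is to obtain Corollary~\ref{C:rep} with essentially no new work, by pulling back the $\Spin_8(\C)$-isomorphism of Lemma~\ref{L:rep} along the morphism $\tilde{\rm Ad}$.

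Recall the observation made just above the statement: for each $j \in \{1,2,3\}$ the composite
\[  \GL_3(\C) \longrightarrow \PGL_3(\C) \xrightarrow{\ \tilde{\rm Ad}\ } \Spin_8(\C) \xrightarrow{\ \rho_j\ } \SO_8(\C) \subset \GL_8(\C)  \]
is, as a representation of $\GL_3(\C)$, independent of $j$, and is the adjoint representation ${\rm Ad}$ on $\mathfrak{sl}_3$. This is because $\tilde{\rm Ad}$ is the inclusion of the $\theta_{\mathbb{M}}$-fixed subgroup $\Aut(\mathbb{M},\ast,S) = \PGL_3$ into $\Spin(\mathbb{M},\ast,S)$, hence is invariant under $\theta_{\mathbb{M}}$, while $\theta$ cyclically permutes $\rho_1, \rho_2, \rho_3$; therefore $\rho_1 \circ \tilde{\rm Ad} \simeq \rho_2 \circ \tilde{\rm Ad} \simeq \rho_3 \circ \tilde{\rm Ad} =: {\rm Ad}$. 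I would simply record this step first.

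Next I would restrict (pull back) the isomorphism $\rho_1 \otimes \rho_2 \simeq \rho_3 \oplus \wedge^3 \rho_3$ of Lemma~\ref{L:rep} along $\tilde{\rm Ad} \circ (\GL_3 \to \PGL_3)$. Since pullback of representations along a group homomorphism is compatible with tensor products, direct sums and exterior powers, the left-hand side becomes $(\rho_1 \circ \tilde{\rm Ad}) \otimes (\rho_2 \circ \tilde{\rm Ad}) \simeq {\rm Ad} \otimes {\rm Ad}$ and the right-hand side becomes $(\rho_3 \circ \tilde{\rm Ad}) \oplus \wedge^3(\rho_3 \circ \tilde{\rm Ad}) \simeq {\rm Ad} \oplus \wedge^3 {\rm Ad}$, which is exactly the asserted isomorphism.

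There is no real obstacle here: the only substantive input, the $j$-independence of $\rho_j \circ \tilde{\rm Ad}$, has already been established, and everything else is formal. As a consistency check one may note $\dim({\rm Ad} \otimes {\rm Ad}) = 64 = 8 + \binom{8}{3} = \dim({\rm Ad}) + \dim(\wedge^3 {\rm Ad})$. One could alternatively prove the identity intrinsically, without invoking Lemma~\ref{L:rep}: writing ${\rm Ad} \oplus \mathbf{1} \simeq {\rm std} \otimes {\rm std}^\vee$ for the standard representation ${\rm std}$ of $\GL_3(\C)$ and using $\wedge^2 {\rm std} \simeq {\rm std}^\vee \otimes \det$, one decomposes $({\rm std} \otimes {\rm std}^\vee)^{\otimes 2}$ into irreducibles and compares both sides; this works but is longer, and is unnecessary in view of Lemma~\ref{L:rep}.
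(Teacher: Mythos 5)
Your proposal is correct and is exactly the paper's argument: the paper records the $j$-independence of $\rho_j \circ \tilde{\rm Ad}$ (via the $\theta_{\mathbb{M}}$-invariance of $\tilde{\rm Ad}$) and then deduces the corollary by pulling back the isomorphism of Lemma \ref{L:rep}, just as you do. No gaps; the dimension check and the alternative intrinsic decomposition are fine but unnecessary extras.
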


\vskip 5pt

 \subsection{\bf Chevalley-Steinberg \'epinglage.} \label{SS:CS}
For the purpose of this paper, the above geometric algebra approach to $(\PGSO_8,\theta)$ is less useful. Instead, we 
 will give a root theoretic description of triality,  by fixing a Chevalley-Steinberg system of \'epinglage for $\PGSO_8$. 
 More precisely,  we start with a Chevalley model of the split group $\PGSO_8$ over $\mathbb{Z}$ and 
  fix a pair $T \subset B$ consisting of a maximal split torus contained in a Borel subgroup over $\Z$. This gives a system of simple roots 
  \[   \Delta(T,B) = \{\alpha_0, \alpha_1, \alpha_2, \alpha_3\},\]
   with $\alpha_0$ corresponding to the branch vertex in the Dynkin diagram:
  
  \[
\xymatrix@R=10pt{
&  \alpha_1 \ar@{-}[dd]  &  \\
& & \\
 &   \alpha_0   &  \\
\alpha_2 \ar@{-}[ru] &  &  \alpha_3 \ar@{-}[lu]  \\
} \]
  
    We may fix a pinning $(T, B,  \{x_{\alpha_i} \} )$, with $x_{\alpha_i}$ a basis vector in the root subspace $\mathfrak{u}_{\alpha_i}$.  This then defines a splitting of the exact sequence
   \[  \begin{CD}
   1 @>>> \PGSO_8 @>>> \Aut(\PGSO_8) @>>> {\rm Out}(\PGSO_8) @>>> 1, \end{CD} \]
   with image 
   \[ \Aut(\PGSO_8, T,B, \{ x_{\alpha_i} \}) \subset \Aut(\PGSO_8). \]
   Since ${\rm Out}(\PGSO_8) \simeq S_3$, this gives us a  triality automorphism $\theta$ which   permutes $\{x_{\alpha_1}, x_{\alpha_2}, x_{\alpha_3}\}$ cyclically and fixes $x_{\alpha_0}$, so that
  \[  \PGSO_8^{\theta} \simeq {\rm G}_2. \]
 
 \vskip 5pt
 
   The dual group $\Spin_8(\C)$ thus inherits $T^{\vee} \subset B^{\vee}$ and a pinning $\{ x_{\alpha_i^{\vee}} \}$ permuted by $\theta^{\vee}$ as well. 
   Moreover, 
   \[  \Spin_8(\C)^{\theta^{\vee}} \simeq {\rm G}_2(\C) \]
with maximal torus $(T^{\vee})^{\theta^{\vee}}$.  The action of $(T^{\vee})^{\theta^{\vee}}$ on $\mathfrak{spin}_8 = {\rm Lie}(\Spin_8(\C))$ gives rise to a ${\rm G}_2$ root system $\Phi_{G_2}$, which is simply the restriction of the roots of $\Spin_8(\C)$ ($=$ coroots of $\PGSO_8$) to $(T^{\vee})^{\theta}$.  Writing
\[  \alpha = \text{restriction of $\alpha_i^{\vee}$  ($i=1,2,3$)} \quad \text{and} \quad \beta = \text{restriction of $\alpha_0^{\vee}$}, \]
one has a system of simple roots $\{\alpha, \beta\}$ for $\Phi_{G_2}$. The long root spaces have dimension $1$ whereas the short root spaces have dimension $3$. 
Moreover, the triality automorphism $\theta^{\vee}$ preserves each of these root spaces for $\Phi_{G_2}$.

   \vskip 5pt
   
 This Chevalley-Steinberg system of pinning provides us with a coordinate system with respect to which we will perform some explicit computations in the next section.
  
 \vskip 5pt

\vskip 10pt

 \section{\bf Twisted Endoscopic Groups} \label{S:endoscopic}
 We 
 shall be applying the theory of twisted endoscopy developed by Langlands and Kottwitz-Shelstad \cite{KSh}  to the twisted space defined by the triality automorphism $\theta$ 
 of $\PGSO_8$.  The general theory of the twisted trace formula was developed in \cite{LW} and its stabilization 
  has been established in the  2-volume book  \cite{MW1, MW2} of Moeglin-Waldspurger (modulo the twisted weighted fundamental lemma for general quasi-split groups). To exploit this stable twisted trace formula, we need to work out the elliptic twisted endoscopic data for $(\PGSO_8, \theta)$. This is the purpose of this section. 
 \subsection{\bf Lie algebra computations.} \label{SS:Lie alg}
\vskip 5pt

Our aim now is to determine, up to isomorphism, the elliptic endoscopic data $(H,s, \xi)$ 
of the twisted space $(\PGSO_8, \theta)$ over the number field $k$, 
following \cite[\S 2.1]{KSh}.\footnote{In our setting, the cocycle ${\bf a}$ {\it loc. cit.} is chosen to be $1$, 
which forces the $a'$ in (2.1.4a) to be $1$ as well.} 
As $\PGSO_8$ is adjoint and split over $k$, their description simplifies a little and 
we may assume that these triple $(H,s,\xi)$ are as follows: 
\begin{itemize}
\item the element $s$ is a finite order element of $T^{\vee}$,
\vskip 5pt
\item the ({\it endoscopic}) group $H$ is a split connected reductive group over $k$ equipped with an isomorphism 
between $H^{\vee}$ and the centralizer ${\rm C}_s$ in $\Spin_8(\C)$ of the element $s \cdot \theta^{\vee} \in \Spin_8(\C) \cdot \theta^{\vee}$,
\vskip 5pt
\item ({\it elliptic} condition) $Z(\xi(H^{\vee}))^0 \subset Z_{\Spin_8}$, or equivalently, $H$ is semisimple. 
\end{itemize}
Indeed, it follows from a classical result of Steinberg \cite[Chap. 1, \S 1.1, Pg. 14]{KSh} that ${\rm C}_s$ is connected
for all $s \in T^\vee$. It is harmless to view $\xi$ as an inclusion and write $H^\vee={\rm C}_s$.
 \vskip 5pt 
 To determine the possible $H^{\vee}$, we take a general $s \cdot \theta^{\vee}$, consider its adjoint action on $\mathfrak{spin}_8$ and determine the fixed Lie subalgebra $\mathfrak{spin}_8^{s \cdot \theta^{\vee}}$.  Since we are interested in $s \cdot \theta^{\vee}$ up to conjugacy by $\Spin_8(\C)$, we may consider $s$ up to $\theta$-conjugacy by elements of $T^{\vee}(\C)$. As the group $T^\vee(\C)$ is divisible, it is not hard to see that one can assume that 
 \[  s \in T^{\vee}(\C)^{\theta^{\vee}}. \]
 \vskip 5pt
 
 Since $\Spin_8(\C)$ is simply-connected, $X_*(T^{\vee}) = X^*(T)$ is equal to the root lattice of $\PGSO_8$.
Hence we may write any element of $T^{\vee}(\C)^{\theta^{\vee}}$ as:  
 \begin{equation} \label{E:torus}   s(u,t) =\alpha_0(u) \cdot  \alpha_1(t) \cdot \alpha_2(t) \cdot \alpha_3(t) , \quad \text{with $u, t \in \C^{\times}$.} \end{equation}
 By our discussion in the previous subsection, the action of $s(u,t) \cdot \theta^{\vee}$ on $\mathfrak{spin}_8$ stabilizes $\mathfrak{t}^{\vee} = {\rm Lie}(T^{\vee})$ and 
 the root subspaces for the ${\rm G}_2$-root system.
  Hence,
  \[  \mathfrak{spin}_8^{s(u,t) \cdot \theta^{\vee}}
= (\mathfrak{t}^{\vee})^{\theta^{\vee}} \oplus \bigoplus_{\gamma \in \Phi_{G_2}}  \mathfrak{u}_{\gamma}^{s(u,t) \cdot \theta^{\vee}}. \]
 In particular, to determine this Lie subalgebra fixed by $s(u,t) \cdot \theta^{\vee}$, we need to figure out its fixed space in each root subspace $\mathfrak{u}_{\gamma}$ for $\gamma \in \Phi_{G_2}$. This is a linear algebra computation which we illustrate for the case where $\gamma = \alpha$ and $\beta$. 
 \vskip 5pt
 
 \begin{itemize}
 \item $\gamma= \alpha$: the action of $s(u,t) \cdot \theta^{\vee}$ on the basis elements $\{x_{\alpha_1^{\vee}}, x_{\alpha_2^{\vee}}, x_{\alpha_3^{\vee}} \}$ is given by:
 \[  \begin{CD}
 x_{\alpha_i^{\vee}}  @>\theta^{\vee}>> x_{\alpha_{i+1}^{\vee}} @>{\rm Ad}(s(u,t))>>  t^2/u \cdot x_{\alpha_{i+1}^{\vee}}. \end{CD} \]
 Hence, with respect to the basis $\{x_{\alpha_i^{\vee}} \}$, the action of $s(u,t) \cdot \theta^{\vee}$ on $\mathfrak{u}_{\alpha}$ is given by the matrix 
 \[  t^2/u \cdot  E = t^2/u \cdot \left( \begin{array}{ccc}
 0 &0 & 1 \\
 1 & 0 & 0 \\
 0 & 1 & 0 \end{array} \right). \]
 The characteristic polynomial of $E$ is $-\lambda^3 +1$, so that its eigenvalues are the 3rd roots of unity. Hence, the 1-eigenspace of the $s(u,t) \cdot \theta^{\vee}$-action on  $\mathfrak{u}_{\alpha}$ is nonzero if and only if $t^6 =u^3$. 
 
 \vskip 5pt
 
 \item $\gamma = \beta$: the action of $s(u,t) \cdot \theta^{\vee}$ on $x_{\alpha_0^{\vee}}$ is given by
 \[  x_{\alpha_0^{\vee}} \mapsto u^2/t^3 \cdot x_{\alpha_0^{\vee}}. \]
 Hence $x_{\alpha_0^{\vee}}$ is fixed if and only if $u^2 = t^3$. 
  \end{itemize}
 We summarize the results of the computation for the other root spaces in the following table (in which we only indicate the results for positive roots). 
 \vskip 10pt
 
 
  
 
 
 

 \begin{center}
 \begin{tabular}{|c|c|c|}
  \hline
  Long roots &  $\theta$-orbit of & 1-eigenspace nonzero iff  \\
\hline  
  $\beta$ & $\alpha_0^\vee$ & $t^3 = u^2$ \\
\hline
 $3\alpha+ \beta$  & $\alpha_0^\vee+\alpha_1^\vee+\alpha_2^\vee+\alpha_3^\vee$ & $t^3 = u$ \\
 \hline
  $3 \alpha+2 \beta$ & $2\alpha_0^\vee+\alpha_1^\vee+\alpha_2^\vee+\alpha_3^\vee$ & $u=1$ \\ 
  \hline 
  \hline
   Short roots & $\theta$-orbit of & 1-eigenspace nonzero iff    \\
 \hline
   $\alpha$ & $\alpha_1^\vee$ & $t^6 = u^3$     \\
 \hline
  $\alpha+ \beta$  & $\alpha_0^\vee+\alpha_1^\vee$ & $t^3 = u^3$ \\
  \hline
 $2\alpha + \beta$ & $\alpha_0^\vee+\alpha_1^\vee+\alpha_2^\vee$ &  $t^3 =1$  \\
   \hline
 \end{tabular}
 \end{center} 
 \vskip 5pt

 Using the above data, our job now is to find all pairs $(u,t) \in \C^{\times} \times \C^{\times}$ such that $\mathfrak{spin}_8^{s(u,t) \cdot \theta^{\vee}}$ is  semisimple.
 Since we are interested in doing so up to conjugacy, it is helpful to first make a couple of elementary observations:
 \vskip 5pt
 
 \begin{itemize}
 \item[(a)] If $\zeta$ is a 3rd root of unity, then $s(u,t) \cdot \theta^{\vee}$ and $s(u,t \zeta)\cdot \theta^{\vee}$ are conjugate by the element
 \[   \alpha_1(1) \cdot \alpha_2(\zeta)  \cdot \alpha_3(\zeta^2)  \in T^{\vee}(\C). \]
 
 \item[(b)] The elements $s(u,1)\cdot \theta^{\vee}$ and $s(u^{-1}, 1)\cdot \theta^{\vee}$ are conjugate by the Weyl group element $w_{\alpha_0}$ associated to the simple reflection in $\alpha_0$. 
  \end{itemize}
 
 \vskip 5pt
 We can now begin our analysis:
 \vskip 5pt
 
 \begin{itemize}
 \item[(i)]  Referring to the 3 equations for the long root spaces (in the last column of the above table) as long root equations, we observe that if all 3 long root equations hold, then one has $ u=1$ and $t^3 =1$, so that the 3 short root equations also hold.  By observation (a) above, we may take $t =1$ as well. Hence, one obtains $(u,t) = (1,1)$ and $\mathfrak{spin}_8^{\theta^{\vee}} = \mathfrak{g}_2$. 
 \vskip 5pt
 
 \item[(ii)]  If two of the long root equations hold, then so does the last one, and hence we are reduced to the case above. 
 \vskip 5pt
 
 \item[(iii)]  If exactly one of the long root equations holds, let us suppose without loss of generality that the last one holds, i.e. $u=1$, but $t^3 \ne 1$. 
 Then we observe that among the short root equations, only the first one has a chance of holding, and this happens precisely when $t^6 =1$ but $t^3 \ne 1$. 
 If this short root equation does not hold, the fixed space is not semisimple and hence the resulting endoscopic group will not be elliptic. Hence, the only candidate for an elliptic endoscopic group here is for this equation to hold. By observation (a) above, there is no loss of generality in taking $t = -1$, so that we have $(u,t) = (1,-1)$, and $\mathfrak{spin_8}^{s(1,-1) \cdot \theta^{\vee}} =\mathfrak{so}_4$. 
 \vskip 5pt
 
 \item[(iv)]  Suppose that none of the long root equations hold. Then we need at least 2 of the short root equations to hold. But then all 3 of the short root equations will hold. Then we deduce that $u^3 = 1 = t^3$. By (a) again, we may take $t =1$, and by (b), we see that $s(\zeta,1) \cdot \theta^{\vee}$ and $s(\zeta^{-1}, 1) \cdot \theta^{\vee}$ are conjugate. Hence we have $(u,t) = (\zeta, 1)$ and $\mathfrak{spin}_8^{s(\zeta,1) \cdot \theta^{\vee}} = \mathfrak{sl_3}$. 
 \end{itemize}

 \vskip 5pt
 
 \subsection{\bf Elliptic endoscopic data.}
 From the Lie algebra computations of the previous subsection, we can deduce:
 \vskip 5pt

 \begin{prop}  \label{P:ell-endo-data}
 The elliptic endoscopic groups for the twisted space $(\PGSO_8, \theta)$ determined by the triality automorphism are the split groups ${\rm G}_2$, $\SL_3$ and $\SO_4$. 
 Moreover, the outer automorphism group of each of  these elliptic endoscopic data is trivial. 
 \end{prop}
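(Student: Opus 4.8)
The plan is to read the list of endoscopic groups straight off \S\ref{SS:Lie alg}, and then to rule out nontrivial automorphisms of each datum by inspecting the $8$-dimensional representation through which it acts. \emph{First,} for the classification of the groups: since $\PGSO_8$ is adjoint and split, the ellipticity condition $Z(\xi(H^\vee))^0\subseteq Z_{\Spin_8}$ is exactly the requirement that $C_s=H^\vee$ be semisimple, so the subcases (i)--(iv) of \S\ref{SS:Lie alg} already show that, up to $\Spin_8(\C)$-conjugacy, an elliptic $s\cdot\theta^\vee$ is one of $\theta^\vee$, $s(1,-1)\cdot\theta^\vee$, $s(\zeta,1)\cdot\theta^\vee$, with fixed subalgebras $\mathfrak g_2$, $\mathfrak{so}_4$, $\mathfrak{sl}_3$. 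By Steinberg's theorem $C_s$ is connected, so it remains to pin down the isogeny type of each $C_s=H^\vee$: for $\theta^\vee$ it is $G_2(\C)=\Spin_8(\C)^{\theta^\vee}$ by \S\ref{SS:CS}; for $s(1,-1)\cdot\theta^\vee$, whose square is $\theta^{\vee2}$, one has $C_s\subseteq\Spin_8(\C)^{\theta^\vee}=G_2(\C)$, and Borel--de Siebenthal together with the restriction of the $7$-dimensional representation of $G_2$ (which must contain a $(\mathbf 2\boxtimes\mathbf 2)$-summand) identifies $C_s=\SO_4(\C)$; and for $s(\zeta,1)\cdot\theta^\vee$ the table in \S\ref{SS:Lie alg} shows that the surviving roots form the short $A_2$ of $\Phi_{G_2}$ and that $Z(C_s)=\bigcap_\gamma\ker\gamma$ is trivial, so $C_s=\PGL_3(\C)$ (the image of $\tilde{\rm Ad}$). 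Dualizing, and using that $G_2$ and $\SO_4$ are self-dual while $\SL_3^\vee=\PGL_3$, the endoscopic groups are $G_2$, $\SO_4$, $\SL_3$.

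\emph{For the second statement}, recall (with the cocycle ${\bf a}$ taken to be $1$) that ${\rm Out}(\mathfrak e)$ is the quotient by $C_s\cdot Z_{\Spin_8}$ of the group of $g\in\Spin_8(\C)$ normalizing $C_s$ with $g\,(s\cdot\theta^\vee)\,g^{-1}\in Z_{\Spin_8}\cdot(s\cdot\theta^\vee)$. I would first check $C_{\Spin_8(\C)}(C_s)\subseteq C_s\cdot Z_{\Spin_8}$ in each case (Schur's lemma when $C_s=\PGL_3(\C)$ acts irreducibly on $\mathfrak{sl}_3$; a look at the multiplicity-free decomposition of the $8$-dimensional representation when $C_s=G_2(\C)$ or $\SO_4(\C)$), which gives an embedding ${\rm Out}(\mathfrak e)\hookrightarrow{\rm Out}(H)$; this already settles $H=G_2$, since ${\rm Out}(G_2)=1$. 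For $H=\SL_3$ and $H=\SO_4$ one must show the nontrivial class of ${\rm Out}(H)=\Z/2$ is not realized, and here I would use the standard representation $\rho\colon C_s\hookrightarrow\Spin_8(\C)\to\SO_8(\C)\subset\GL_8(\C)$. For $\SO_4$, $\rho$ is the restriction to $\SO_4\subset G_2(\C)$ of $\mathbf 7\oplus\mathbf 1$; the two $\SL_2$-factors of $\SO_4$ occur asymmetrically in it (the adjoint of only one of them appears), so $\rho\circ\sigma'\not\cong\rho$ for the factor-exchanging automorphism $\sigma'$, and therefore no element of $\GL_8(\C)$ --- in particular none of $\Spin_8(\C)$ --- induces $\sigma'$. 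For $\SL_3$, $\rho$ is the adjoint representation of $\PGL_3$ on $\mathfrak{sl}_3$, which \emph{is} fixed by the nontrivial $\sigma\in{\rm Out}(\PGL_3)$ (namely $X\mapsto -X^t$); but any intertwiner realizing $\sigma$ is an orientation-reversing isometry of $(\mathfrak{sl}_3,\text{Killing form})$ --- it has determinant $-1$ --- hence lies in $\O_8(\C)\setminus\SO_8(\C)$, and rescaling cannot fix this (an isometry can only be scaled by $\pm1$), so $\sigma$ is not realized in $\SO_8(\C)$ and a fortiori not in $\Spin_8(\C)$. Hence ${\rm Out}(\mathfrak e)=1$ in all three cases.

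Most of this is organizing \S\ref{SS:Lie alg}. I expect the two points needing genuine care to be: (a) nailing the precise isogeny form of $C_s$ in the $\SO_4$ case, where $Z(C_s)$ and the root-lattice index alone do not separate $\SO_4$ from $\SL_2\times\PGL_2$, so one needs the extra input from $C_s\subseteq G_2(\C)$ and the $7$-dimensional representation; and (b) the verification $C_{\Spin_8(\C)}(C_s)\subseteq C_s\cdot Z_{\Spin_8}$, which is what makes ${\rm Out}(\mathfrak e)$ a subgroup of ${\rm Out}(H)$ and hence reduces the problem to the representation-theoretic obstructions above. Once these are in place the argument finishes quickly.
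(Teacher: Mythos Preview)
Your proposal is correct and follows the paper's overall architecture: read off the three Lie algebras from \S\ref{SS:Lie alg}, upgrade to the isogeny type of $C_s$, then show the normalizer of $H^\vee$ in $\Spin_8(\C)$ is $H^\vee\cdot Z_{\Spin_8}$ by inspecting the $8$-dimensional representation.  Two steps differ in execution.  For the identification $C_s\simeq\PGL_3(\C)$, the paper observes that $s(\zeta,1)\cdot\theta^\vee$ has order $3$ and hence defines a splitting of (\ref{E:Aut}) over $\Z/3\Z$, then invokes the KMRT dichotomy ($G_2$ or $\PGL_3$); your argument instead computes that the short roots of $\Phi_{G_2}$ already span $X^*((T^\vee)^{\theta^\vee})$, forcing $Z(C_s)=1$ --- more hands-on but self-contained.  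For the $\SL_3$ case of the automorphism statement, the paper argues via the fixed subgroup $\SO_3(\C)=(\PGL_3)^{\sigma}$ and the $3+5$ decomposition of ${\rm std}|_{\SO_3}$, concluding that any normalizing $g$ must act by a global sign; your determinant argument (the unique isometric intertwiner $X\mapsto -X^t$ has $\det=(-1)^8\cdot(-1)^3=-1$, hence lands in $\O_8\setminus\SO_8$) is a cleaner one-line obstruction.  Both alternative arguments are valid and arguably more elementary; the paper's routes have the virtue of citing structural facts (KMRT, the $\SO_3$ fixed-point group) that connect to the surrounding narrative.
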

 
 \vskip 5pt
 \begin{proof}
As mentionned above, because $\Spin_8(\C)$ is simply-connected, the group $\Spin_8(\C)^{ s \cdot \theta^{\vee}}$ is in fact connected \cite[Chap. 1, \S 1.1, Pg. 14]{KSh}.
  The Lie algebra computations above thus allow us to identify $\Spin_8(\C)^{ s \cdot \theta^{\vee}}$ up to isogeny, so we need to pin down the isogeny class. 
 In item (i) above, we already know that:
 \[  \Spin_8(\C)^{ \theta^{\vee}}  = {\rm G}_2(\C). \]
 For item (iv), observe that $s(\zeta, 1) \cdot \theta^{\vee}$ has order $3$ and thus provide a splitting of the automorphism short exact sequence (\ref{E:Aut}) over $\Z/3\Z$. As we have noted,  by \cite[Cor. 35.10 and Prop. 36.14]{KMRT}, there are only two such splittings (up to conjugacy) with fixed groups of type ${\rm G}_2$ or $\PGL_3$. Hence, in the context of item (iv), we have
 \[  \Spin_8(\C)^{ s(\zeta, 1) \cdot \theta^{\vee}}    \simeq \PGL_3 (\C).\]
 Finally, for item (iii), we observe that 
 \[  
 \mathfrak{spin_8}^{s(1,-1) \cdot \theta^{\vee}}  \subset \mathfrak{spin}_8^{\theta^{\vee}} = \mathfrak{g}_2 \]
 so that  $\Spin_8(\C)^{ s(1,-1) \cdot \theta^{\vee}}$ is a subgroup of  $\Spin_8(\C)^{ \theta^{\vee}}= G_2(\C)$ of type $A_1 \times A_1$. This implies 
 \[ \Spin_8(\C)^{ s(1,-1) \cdot \theta^{\vee}} \simeq \SO_4(\C). \]
 
The  first assertion of the proposition then follows from the definition of twisted endoscopic data given in \cite[Chap. 2]{KSh}.
\vskip 5pt

For the assertion on the triviality of automorphism groups, it suffices to show that for each of the 
three possibilities above for $H^\vee=\Spin_8(\C)^{ s \cdot \theta^{\vee}}$, the normalizer $N$ of $H^\vee$ in $\Spin_8(\C)$ is $H^\vee \times Z$ with $Z=Z_{\Spin_8}$.

\vskip 5pt

Firstly, an inspection of the $8$-dimensional representation ${\rm std}_{|H^\vee}$ (which is respectively of type $1+7$, irreducible, or $1+3+4$), shows  that the centralizer of $H^\vee$ in $\Spin_8(\C)$ is $Z_{H^\vee} \times Z$ in all cases. This shows the claim for  the ${\rm G}_2$ case, since  ${\rm Out} ({\rm G}_2)\,=1$. In the two other cases we have ${\rm Out}(\widehat{H}) \simeq  \Z/2$ and so some additional arguments are needed:

\begin{itemize}
\item[-]  (${\rm SO}_4$ case) We have $N=H^\vee$ since the $3$-dimensional summand of ${\rm std}_{|H^\vee}$ is not isomorphic to its outer conjugate.
\vskip 5pt

\item[-] (${\rm PGL}_3$ case) Assume there is $g \in N$ such that ${\rm int}_g$ induces $h \mapsto {}^{\rm t}h^{-1}$ on $H^\vee \simeq {\rm PGL}_3$. Then $g$ commutes with 
${\rm SO}_3(\C) = (H^\vee)^{{\rm int}_g}$. Now  ${\rm std}_{|{\rm SO}_3(\C)}$ has type $3+5$, and the element $g$ has to preserve each of these summands since it commutes with $\SO_3(\C)$ and thus acts by a scalar  on each summand. Indeed, since ${\rm std}(g)$ belongs to $({\rm O}_3(\C) \times  {\rm O}_5(\C)) \cap\SO_8(\C)$, 
the action of  $g$ on the two summands must be via the same sign $\pm 1$. 
This implies that $g \in Z$, which is a contradiction.
\end{itemize}
\vskip 5pt

\noindent Hence  we have shown that $N=H^\vee \times Z$ in all cases, so that the outer automorphism group of each of  these elliptic endoscopic data is trivial.
 \end{proof}
 \vskip 5pt

 \subsection{\bf Twisted Levi subspaces} \label{SS:twisted Levi}
We now describe  the twisted Levi subspaces for the split $(\PGSO_8, \theta)$. These correspond to $\theta$-stable Levi subgroups of $\PGSO_8$. 
Up to conjugacy, these in turn correspond to the $\theta$-stable subsets of the set $\Delta(T,B)$ of simple roots.  
Hence we may enumerate the proper twisted Levi subspaces as follows:
\vskip 5pt

\begin{itemize}
\item the empty subset of $\Delta(T,B)$: this corresponds to the $\theta$-stable maximal torus $T$, giving rise to the twisted Levi subsapce $T \cdot \theta$.
\vskip 5pt

\item the subset $\{ \alpha_0 \}$: this corresponds to a $\theta$-stable  Levi subgroup $L_0 \simeq  (\GL_2 \times \mathbb{G}_m^3)/ \mathbb{G}_m^{\Delta}$, where the action of $\theta$ is given by cyclic permutation of the three $\mathbb{G}_m$'s.

\vskip 5pt

\item the subset $\{\alpha_1, \alpha_2, \alpha_3\}$: this corresponds to a $\theta$-stable Levi subgroup $L \simeq (\GL_2 ^3 \times \mathbb{G}_m)/ j(\mathbb{G}_m^3)^{\Delta}$, where $j$ identifies $\mathbb{G}_m^{\Delta}$  with the center of $\GL_2^3$ and maps to $\mathbb{G}_m$ by the product map. The action of $\theta$ is given by cyclic permutation of the three $\GL_2$'s. 
\end{itemize}
\vskip 5pt

Now the theory of twisted endoscopy applies to each of the above twisted Levi subspaces as well. Hence one may determine their elliptic twisted endoscopic groups
following a similar computation as in the previous subsection. 
In fact, from the general theory of twisted endoscopy, the elliptic twisted endoscopic groups of a twisted Levi subspace of $(\PGSO_8, \theta)$ are naturally identified with Levi subgroups of  the elliptic twisted endoscopic groups $H$ of $(\PGSO_8, \theta)$. Since the groups $H$ are of rank $2$, besides their maximal torus $T_H$, their other proper Levi subgroups are the maximal ones and these are all  isomorphic to $\GL_2$. More precisely:

\vskip 5pt

\begin{itemize}
\item For $H = \SL_3$, there is a unique conjugacy class of maximal Levi subgroups, which we will denote by $\GL_{2,l}$;

\item For $H = G_2$, there are two conjugacy classes of maximal Levi subgroups, which we will denote by $\GL_{2,l}$ and $\GL_{2,s}$, corresponding to the long and short root $\GL_2$'s respectively;

\item For $H = \SO_4$, there are two conjugacy classes of maximal Levi subgroups, which we will denote by $\GL_{2,l}$ and $\GL_{2,s}$.
\end{itemize}
The nomenclature is chosen so that the groups labelled as $\GL_{2,l}$ above are naturally identified with each other via {\it admissible isomoprhisms},  and likewise for the groups labelled as $\GL_{2,s}$. 
These admissible isomorphisms  arise from the fact that $\xi_H(\GL_{2,l}^{\vee})$ gives the same  subgroup up to conjugacy in $\Spin_8(\C)$ for the various $H$. Likewise the various $\xi_H(\GL_{2,s}^{\vee})$ are the same subgroups in $\Spin_8(\C)$ up to conjugacy. 
\vskip 5pt

With the above notations, we can now tabulate the elliptic twisted endoscopic groups of the twisted Levi subspaces, as well as the Levi subgroups of $H$ they are identified with.
In the table, we have written
\[   {}^{1 - \theta}T := \text{ image of $[(1- \theta):T \rightarrow T]$.} \]
\vskip 5pt

\begin{center}
\begin{tabular}{|c|c|c|c|}
\hline 
Twisted Levi subspace &  $(T, \theta)$ & $(L_0,\theta)$ & $(L, \theta)$  \\
\hline  
Elliptic twisted endoscopic group &   $T / ^{1-\theta}T$  &  $\GL_2$ & $\GL_2$ \\
\hline 
Levi subgroup of $H$ identified with & $T_H$ & $\GL_{2,s}$    &  $\GL_{2,l}$   \\   
\hline
\end{tabular}
\end{center}

\vskip 10pt

\section{\bf Local Twisted Endoscopic Transfer}  \label{S:LET}
Let $F$ be a local field of characteristic $0$. 
Having determined the twisted endoscopic groups $H$ for $(\PGSO_8, \theta)$, the theory of twisted endoscopy furnishes a local transfer map from $\theta$-twisted orbital integrals  on $\PGSO_8(F)$ to orbital integrals on $H(F)$. In this section, we recall the basic objects and record some results on this local transfer that we will need.
\vskip 5pt

\subsection{\bf Local K-forms} \label{SS:local-K}
 A slight complication with the theory of stable twisted trace formula is the need to take into account of K-forms \cite[\S I.1.11]{MW1}, which are certain inner forms of the twisted space in question.
 \vskip 5pt
 
   In the context of $\PGSO_8$, we have seen in 
\S \ref{S:triality} that an octonion algebra $\mathbb{O}$ gives rise to a group $\PGSO_8^{\mathbb{O}}$, which is an inner form of the corresponding split group equipped with a triality automorphism. Over a number field or a local field,  these turn out to be the K-forms of the twisted space $(\PGSO_8, \theta)$.  Thus, over  a non-Archimedean local field  or $\C$, $(\PGSO_8, \theta)$ has no additional $K$-forms,  since there is a unique octonion algebra up to isomorphism. On the other hand,  over $\R$, there are two octonions algebras: the split one and a division algebra. As such, there are two K-forms of $(\PGSO_8, \theta)$ over $\R$.  
\vskip 5pt

\vskip 5pt

\subsection{\bf Twisted orbital integrals}
We now recall some properties of the space $\mathcal{I}^{\theta}(\PGSO^{\mathbb{O}}_8)$ of $\theta$-twisted orbital integrals on  a local K-form $\PGSO^{\mathbb{O}}_8$ over $F$. For simplicity, we will suppress $\mathbb{O}$ from the notation and write $G = \PGSO_8$.
 
\vskip 5pt

For a test function $f \in C^{\infty}_c(G(F))$,  its normalized $\theta$-twisted orbital integral $ \mathcal{O}^{\theta}(-, f) $ is a function on the subset of $\theta$-regular semisimple elements $\gamma$ of $G(F)$ defined by
\begin{equation} \label{E:orb-int1}    
\mathcal{O}^{\theta}(\gamma,f) = \Delta_{\PGSO_8} (\gamma \cdot \theta) \cdot \int_{ G_{\gamma \cdot \theta}(F) \backslash G(F) }  f( g^{-1} \gamma \theta (g)) \, \frac{dg}{dt},
\end{equation}
where $G_{\gamma\cdot \theta}$ denotes the stabilizer of $\gamma \cdot \theta$ in $G$ and 
\[  \Delta_{\PGSO_8} (\gamma \cdot \theta )  = | \det ( 1 - {\rm Ad}(t\theta) | \mathfrak{g} /  \mathfrak{g}_{\gamma \cdot \theta})|^{1/2} \]
is the positive  square root of the $\theta$-twisted Weyl discriminant. The space $\mathcal{I}^{\theta}(G)$ is the topological vector space of such normalized twisted orbital integrals.
\vskip 5pt

Assuming now that $G$ is split, we have the following structures on the space $\mathcal{I}^{\theta}(G)$:
\vskip 5pt

\begin{itemize}
\item[(i)] \cite[I.3.1]{MW1} \,  For each $\theta$-stable Levi subgroup $M$ of $G = \PGSO_8$ (or equivalently any twisted Levi subspace $M \cdot \theta$), there is a restriction or ``constant term" map:
\[ {\rm rest}^{G \cdot \theta}_{M \cdot \theta} : \mathcal{I}^{\theta}(G) \longrightarrow \mathcal{I}^{\theta}(M)^{W(G,M)^{\theta}} \]
where $W(G,M) = N_{G(F)}(M)/M(F)$ and $W(G,M)^{\theta}$ is the  $\theta$-fixed subgroup. Moreover, one has the transitivity of restrictions:
\[  {\rm rest}^{M \cdot \theta}_{T \cdot \theta}   \circ  {\rm rest}^{G\cdot \theta}_{M \cdot \theta}   = {\rm rest}^{G \cdot \theta}_{T \cdot \theta} \]
where $M = L_0$ or $L$.
\vskip 5pt

\item[(ii)] \cite[I.4.2]{MW1}\,  The restriction maps  give rise to a natural filtration 
\[  0=\mathcal{F}^{-1}  \subset \mathcal{F}^0 \subset \mathcal{F}^1 \subset \mathcal{F}^2  = \mathcal{I}^{\theta}(\PGSO_8). \]
These subspaces are given by:
\[  \mathcal{F}^1 = {\rm Ker}({\rm rest}^{G \cdot \theta}_{T \cdot\theta})  \quad \text{and} \quad 
 \mathcal{F}^0 =  {\rm Ker}({\rm rest}^{G \cdot \theta}_{L_0 \cdot \theta}) \cap  {\rm Ker}({\rm rest}^{G \cdot \theta}_{L \cdot \theta}). \]
The subspace $\mathcal{F}^0$ is also called the cuspidal subspace and is denoted by $\mathcal{I}^{\theta}_{cusp}(G)$.
\vskip 5pt

\item[(iii)] \cite[I.4.2]{MW1}\, The successive quotients of the above filtration are given by
\[   \mathcal{F}^2 / \mathcal{F}^1 \simeq  \mathcal{I}^{\theta}_{cusp}(T)^{W(G,T)^{\theta}} \]
and
\[ \mathcal{F}^1/ \mathcal{F}_0 \simeq \mathcal{I}^{\theta}_{cusp}(L_0)^{W(G,L_0)^{\theta}} \oplus \mathcal{I}^{\theta}_{cusp}(L)^{W(G,L)^{\theta}}.   \]
The isomorphisms are naturally induced  by the restriction maps.
\end{itemize}

\vskip 5pt

\noindent{\bf Remark:} In (i) above, we have used the group $W(G,M)^{\theta}$, whereas in \cite[I.3.1 and I.4.2]{MW1}, one sees instead
\[  W(M \cdot \theta) = N_{G(F)}(M \cdot \theta) / M(F). \]
Let us show that these are the same. Suppose that $g \in W(M \cdot \theta)$, so that $g^{-1} M \theta(g) = M$.  This implies that $gMg^{-1} = M \cdot \theta(g)g^{-1}$. 
For this to hold, one must have $\theta(g) g^{-1} \in M$, so that $g$ normalizes $M$ and its image in $W(G,M) = N_{G(F)}(M)/M(F)$ is fixed by $\theta$. 
Conversely, if $g \in N_{G(F)}(M)$ is such that $\theta(g) g^{-1} \in M(F)$, then
\[  g^{-1} M \cdot \theta \cdot g = g^{-1} M \cdot \theta(g) \cdot \theta = g^{-1}M g \cdot\theta = M \cdot \theta, \]
so that $g \in N_{G(F)}(M \cdot\theta)$. 
\vskip 5pt

\subsection{\bf Stable orbital integrals}
For  an connected reductive group $H$ over $F$, let $\mathcal{SI}(H)$ denote the space of stable orbital integrals on $H(F)$.
Recall that  for a test function $f_H \in C^{\infty}_c(H(F))$, its normalized orbital integral is a function on strongly regular semisimple elements $\gamma$ of $H(F)$
defined by
\begin{equation} \label{E:orb-int2}
 \mathcal{O}(\gamma, f_H) = \Delta_H(\gamma) \cdot  \int_{H_{\gamma}(F) \backslash H(F)} f_H( x^{-1} \gamma  x) \, \frac{dx}{dt}, \end{equation}
where $H_{\gamma}$ is the stabilizer of $\gamma$ in $H(F)$ (and hence is a maximal $F$-torus) and 
\[ \Delta_H(\gamma) =  |\det (1- {\rm Ad}(\gamma) |  \mathfrak{h} / \mathfrak{h}_{\gamma}))|^{1/2} \]
is the positive square root of the Weyl discriminant.  We may further replace the normalized orbital integral above by its stabilized version:
\begin{equation} \label{E:orb-int3}
 \mathcal{SO}(\gamma, f_H) = \sum_{\gamma'}   \mathcal{O}(\gamma', f_H) \end{equation}
with the sum running over the conjugacy classes in $H(F)$ which are stably conjugate to $\gamma$.  
The space spanned by  such normalized stable orbital integrals is the  topological vector space $\mathcal{SI}(H)$.
\vskip 5pt

We have the analogous structures on $\mathcal{SI}(H)$:
\vskip 5pt

\begin{itemize}
\item[(i)] For each Levi subgroup $M \subset H$, there is a  restriction or ``constant term" map:
\[  {\rm rest}_M^H:  \mathcal{SI}(H) \longrightarrow \mathcal{SI}(M)^{W(H,M)} \]
where $W(H,M) = N_{H(F)}(M)/ M(F)$. 

\vskip 5pt

\item[(ii)] The restriction maps give rise to a natural filtration on $\mathcal{SI}(H)$:
\[ 0  =: \mathcal{F}^{-1} \subset  \mathcal{F}^0  \subset \mathcal{F}^1 \subset ...\subset \mathcal{F}^r = \mathcal{SI}(H). \] 
The subspace $\mathcal{F}^k$ is defined by 
\[  \mathcal{F}^k =  \bigcap_{M \in \mathcal{L}_{>k}(H)}  {\rm Ker}({\rm rest}_M^H) \] 
where the intersection runs over the set $\mathcal{L}_{>k}(H)$ of representatives of $H(F)$-conjugacy classes of Levi subgroups $M$ of $H$   with ${\rm corank}(M) :=  \dim Z_M/ Z_H > k$ (here $Z_H$ and $Z_M$ denote the centers of $H$ and $M$).  
In particular, $\mathcal{F}^0$ denotes the subspace of elements whose restriction to any proper Levi subgroup vanish. 
This subspace is also called the {\it cuspidal subspace} and is denoted by $\mathcal{SI}_{cusp}(H)$. 
\vskip 5pt

\item[(iii)] The successive quotients of the above filtration is given by:
\[  \mathcal{F}^k / \mathcal{F}^{k-1} \simeq  \bigoplus_{M \in \mathcal{L}_k(H)}  \mathcal{SI}_{cusp}(M)^{W(H,M)} \]
where the sum runs over a set $\mathcal{L}_k(H)$ of representatives of $H(F)$-conjugacy classes of Levi subgroups $M$ with corank($M$) $=k$.  
The isomorphism is given by the restriction maps to Levi subgroups of corank $k$. 
\end{itemize}

\vskip 5pt

We will of course be applying the above results to the case when $H$ is an elliptic twisted endoscopic group of $(\PGSO_8, \theta)$.
As an illustration, let us explicate the above general results when  $H = \SL_3$. In this case, using the notations from \S \ref{SS:twisted Levi}, we have
\[   \mathcal{L}_1(H) =  \{ \GL_{2,l} \} \quad \text{and} \quad \mathcal{L}_2(H) = \{T_{\SL_3} \}. \]
Then the above results say that
\begin{equation} \label{E:Fil1}
 \mathcal{F}^1/\mathcal{F}^0 \simeq  \mathcal{SI}_{cusp}(\GL_{2,l})  = \mathcal{I}_{cusp}(\GL_{2,l}) \end{equation}
and
\begin{equation} \label{E:Fil2}
 \mathcal{F}^2/\mathcal{F}^1 \simeq \mathcal{SI}(T_{\SL_3})^{W(\SL_3, T_{\SL_3})} = \mathcal{I}(T_{\SL_3})^{S_3}. \end{equation}
 Here, for the first identity, we note that the Weyl group $W(\SL_3, \GL_{2,l})$ is trivial and any invariant distribution on $\GL_{2,l}$ is already stably-invariant, whereas for the second identity,  we have $W(\SL_3, T_{\SL_3}) \simeq S_3$. 
 \vskip 5pt
 
\subsection{\bf Local transfer} \label{SS:local-T}
For each local $K$-form $(\PGSO_8^{\mathbb{O}}, \theta)$ and each twisted endoscopic group $H$, the theory of twisted endoscopy furnishes a local transfer of orbital integrals from $(\PGSO_8^{\mathbb{O}}, \theta)$ to $H$. More precisely, we have:
 \vskip 5pt

\begin{prop} \label{P:lt}
Let $F$ be a  local field of characteristic 0 and consider the setting of $(\PGSO_8, \theta)$ over $F$.
 Then we have:
\vskip 5pt

\noindent (i) The twisted endoscopic transfer gives an isomorphism of topological vector spaces
\[  {\rm trans}: \bigoplus_{\mathbb{O}} \mathcal{I}^{\theta}_{cusp}(\PGSO^{\mathbb{O}}_8) \longrightarrow \mathcal{SI}_{cusp}({\rm G}_2) \oplus \mathcal{SI}_{cusp}(\SO_4) \oplus \mathcal{SI}_{cusp}(\SL_3), \]
where the sum runs over the isomorphism classes of octonion $F$-algebras.

\vskip 5pt

\noindent (ii) More generally,  the twisted endoscopic transfer gives an injective map
\[  {\rm trans} : \bigoplus_{\mathbb{O}} \mathcal{I}^{\theta}(\PGSO^{\mathbb{O}}_8) \longrightarrow \mathcal{SI}({\rm G}_2) \oplus \mathcal{SI}(\SO_4) \oplus \mathcal{SI}(\SL_3).\]
Its image is the subspace of  ``compatible families"  of stably invariant distributions of its target  (a notion we will recall after the proposition; see \cite[I.4.11]{MW1}).

\vskip 5pt

 \noindent (iii) Let $F$ be a non-Archimedean local field whose residue characteristic is sufficiently large. For each of the group $G$ under consideration here, let $\mathcal{H}_G$ denote the spherical Hecke algebra of $G(F)$ with respect to a hyperspecial subgroup.  For each elliptic twisted endoscopic group $H$ of $(\PGSO_8, \theta)$, the  embedding $\xi_H: H^{\vee} \hookrightarrow \Spin_8(\C)$ induces (via the Satake isomorphism) a natural algebra homomorphism 
 \[   \xi_H^*: \mathcal{H}_{\PGSO_8} \longrightarrow  \mathcal{H}_H. \]
 Then one has a commutative diagram (the fundamental lemma for spherical Hecke algebras):
 \[  \begin{CD}
  \mathcal{H}_{\PGSO_8} @>\xi_H^*>>  \mathcal{H}_H  \\
  @VVV   @VVV   \\
    \mathcal{I}^{\theta}(\PGSO_8)   @>{\rm trans}_H>>  \mathcal{SI}(H),
   \end{CD} \]
   where the vertical arrows are the formation of  $\theta$-twisted orbital integrals and stable orbital integrals respectively.
\end{prop}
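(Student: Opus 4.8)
\emph{Strategy of proof.} The plan is to obtain all three statements from the general machinery of the stable twisted trace formula of Moeglin--Waldspurger \cite{MW1}, fed with the classification of elliptic twisted endoscopic data from Proposition~\ref{P:ell-endo-data} (and the triviality of their outer automorphism groups proved there), together with the list of twisted Levi subspaces and their endoscopic data in \S\ref{SS:twisted Levi}. In other words, once the endoscopic bookkeeping of \S\ref{S:endoscopic} is in place, parts (i)--(iii) should be formal consequences of \cite{MW1} and the fundamental lemma, and the task is to check that the hypotheses of that machinery hold in our situation.

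For (i), I would first invoke the existence of the twisted transfer ${\rm trans}$ of orbital integrals --- the twisted analogue of the Langlands--Shelstad transfer conjecture, now a theorem (Waldspurger's reduction of twisted transfer to the standard case, combined with Ng\^o's proof of the fundamental lemma) --- with the transfer factors and the measures $dg/dt$ normalized as in \S\ref{SS:local-K}. That ${\rm trans}$ restricts to an isomorphism on the cuspidal (elliptic) subspaces is then the content of \cite[\S I.4]{MW1} applied to $(\PGSO_8,\theta)$: injectivity holds because a cuspidal twisted orbital integral is determined by its values on $\theta$-regular semisimple elliptic elements, which are matched with the elliptic stable orbital integrals on $H\in\{{\rm G}_2,\SO_4,\SL_3\}$; surjectivity holds because, by Proposition~\ref{P:ell-endo-data}, these are \emph{all} the elliptic twisted endoscopic groups and every elliptic stable class on each of them arises as a norm. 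Two features particular to our situation deserve emphasis here: the target carries no outer-automorphism invariance because Proposition~\ref{P:ell-endo-data} shows that the outer automorphism group of each datum ${\rm G}_2,\SL_3,\SO_4$ is trivial; and the direct sum over octonion algebras $\mathbb{O}$ on the source is forced by the description of the $K$-forms of $(\PGSO_8,\theta)$ in \S\ref{SS:local-K}.

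For (ii), I would bootstrap from (i) using the filtration $0=\mathcal{F}^{-1}\subset\mathcal{F}^0\subset\mathcal{F}^1\subset\mathcal{F}^2$ on $\bigoplus_{\mathbb{O}}\mathcal{I}^{\theta}(\PGSO_8^{\mathbb{O}})$ and the corank filtrations on the spaces $\mathcal{SI}(H)$. Since the transfer is compatible with the constant-term maps ${\rm rest}^{G\cdot\theta}_{M\cdot\theta}$ and ${\rm rest}^H_{M_H}$ (parabolic descent of transfer factors), it is a filtered map, and on the associated graded pieces it reduces --- via the computation of the elliptic endoscopic data of the twisted Levi subspaces $(T,\theta)$, $(L_0,\theta)$, $(L,\theta)$ in \S\ref{SS:twisted Levi}, namely $T/{}^{1-\theta}T$, $\GL_{2,s}$ and $\GL_{2,l}$ --- to the cuspidal isomorphism of (i) for these smaller twisted spaces, matching Weyl-group invariances on the two sides. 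An element of the target lies in the image of ${\rm trans}$ if and only if, at each step of the filtration, the constant terms of its three components down to a common Levi subgroup (identified through the admissible isomorphisms of \S\ref{SS:twisted Levi}) agree after transfer from the corresponding twisted Levi subspace; this is exactly the ``compatible families'' condition of \cite[I.4.11]{MW1}, and one concludes by a five-lemma-type diagram chase up the filtration, using the short exact sequences of successive quotients.

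Finally, for (iii): the commutativity of the square on the unit elements, $\mathbf{1}_K\mapsto\mathbf{1}_{K_H}$, is the twisted fundamental lemma, valid once the residue characteristic is large relative to the data; this follows from Ng\^o's theorem via Waldspurger's reduction of twisted to standard endoscopy, and the normalization of $\xi_H^*$ on the unit is compatible with ${\rm trans}_H$ by construction of the Satake isomorphism. To pass from $\mathbf{1}_K$ to an arbitrary element of $\mathcal{H}_{\PGSO_8}$ I would use the twisted version of Hales' reduction of the Hecke-algebra fundamental lemma to unit elements: once the square commutes for $\mathbf{1}_K$, both horizontal arrows become homomorphisms of $\mathcal{H}_{\PGSO_8}$-modules, and $\mathcal{H}_{\PGSO_8}$ is finitely generated, so it suffices to check commutativity on a finite set of generators, which is again an instance of the fundamental lemma. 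The step I expect to be the real obstacle is not conceptual but the careful and consistent matching of normalizations --- transfer factors, the Haar measures $dg/dt$, and the Satake isomorphisms defining the $\xi_H^*$ --- across the three endoscopic groups ${\rm G}_2,\SO_4,\SL_3$ and the twisted Levi subspaces, so that the ``compatible families'' description in (ii) and the commutative square in (iii) hold on the nose; once that bookkeeping is complete, parts (i)--(iii) are an appeal to \cite{MW1} and the fundamental lemma.
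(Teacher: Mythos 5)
Your proposal is correct and follows essentially the same route as the paper: the paper's proof of Proposition \ref{P:lt} consists precisely of an appeal to the general results of Moeglin--Waldspurger, citing \cite[Prop. I.4.11]{MW1} for parts (i) and (ii) and \cite[\S I.6.4]{MW1} (and the references therein) for the fundamental lemma in (iii), specialized via the endoscopic bookkeeping of \S\ref{S:endoscopic}. What you have written is an accurate expanded sketch of the internals of those cited results (transfer via Waldspurger--Ng\^o, the corank filtrations and compatible families, and the Hecke-algebra fundamental lemma), whereas the paper simply quotes them.
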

\vskip 5pt

\begin{proof}
For (i) and (ii), see \cite[Prop. I.4.11, Pg. 96]{MW1}.  For the fundamental lemma in (iii), see \cite[\S I.6.4, Pg. 165]{MW1} and the references therein.
  \end{proof}
\vskip 5pt

We have still to define the notion of a ``compatible family" of stably invariant distributions in Proposition \ref{P:lt}(ii) above.
Suppose that 
\[   (f_{G_2}, f_{\SO_4}, f_{\SL_3})   \in   \mathcal{SI}({\rm G}_2) \oplus \mathcal{SI}(\SO_4) \oplus \mathcal{SI}(\SL_3).\]
 We say that this is a compatible family if  
\[ {\rm rest}^{G_2}_{\GL_{2,l}}  ( f_{G_2}) =    {\rm rest}^{\SO_4}_{\GL_{2,l}}  ( f_{\SO_4}) =    {\rm rest}^{\SL_3}_{\GL_{2,l}}  ( f_{\SL_3}), \]
and
\[  {\rm rest}^{G_2}_{\GL_{2,s}}  ( f_{G_2}) =    {\rm rest}^{\SO_4}_{\GL_{2,s}}  ( f_{\SO_4}).  \]
Here, we recall that the Levi subgroups of the various twisted endoscopic groups $H$ labelled as $\GL_{2,l}$ (respectively $\GL_{2,s}$) are naturally identified with each other via admissible isomorphisms, so that the above identities make sense.
The transitivity of restriction maps implies that when the above two identities hold, we also have 
\[   {\rm rest}^{G_2}_{T_{G_2}}  ( f_{G_2}) =    {\rm rest}^{\SO_4}_{T_{\SO_4}}  ( f_{\SO_4}) =    {\rm rest}^{\SL_3}_{T_{\SL_3}}  ( f_{\SL_3}).\]         
\vskip 5pt

 For any elliptic twisted endoscopic group $H = G_2$, $\SL_3$ or $\SO_4$ of $(\PGSO_8, \theta)$, the projection of  the map ${\rm trans}$ in (ii)  to the $H$-factor gives 
\[  {\rm trans}_H:  \bigoplus_{\mathbb{O}} \mathcal{I}^{\theta}(\PGSO^{\mathbb{O}}_8)  \longrightarrow  \mathcal{SI}(H). \]
By Proposition \ref{P:lt}(ii), the image of ${\rm trans}_H$ is precisely the subspace of those elements which can be completed to a compatible family. Moreover, the natural filtration $(\mathcal{F}^{\bullet})$ on $\mathcal{SI}(H)$ induces the filtration  
\[ \mathcal{T}^{\bullet}  :=  \mathcal{F}^{\bullet} \cap {\rm trans}_H(I^{\theta}(G)) \quad \text{ on ${\rm trans}_H(I^{\theta}(G))$,} \]
so that
\[ \mathcal{T}^k / \mathcal{T}^{k-1} \subset \mathcal{F}^k/ \mathcal{F}^{k-1}.\]
 The following proposition characterizes 
${\rm trans}_H(I^{\theta}(G))$ in terms of this filtration and the associated graded pieces:
\vskip 5pt

\begin{prop} \label{P:image}
\noindent (i) Let $H = G_2$. Then the successive quotients of the filtration $(\mathcal{T}^{\bullet})$ are given by:
\[  \begin{cases}
\mathcal{T}^2/\mathcal{T}^1 =  \mathcal{I}(T_{G_2})^{W(G_2, T_{G_2})},\\
\mathcal{T}^1/\mathcal{T}^0  = \mathcal{I}_{cusp}(\GL_{2,l})^{{\rm Out}(\GL_2)} \oplus   \mathcal{I}_{cusp}(\GL_{2,s})^{{\rm Out}(\GL_2)},\\
\mathcal{T}^0 = \mathcal{F}^0 = \mathcal{I}_{cusp}(G_2).
\end{cases} \]
Here ${\rm Out}(\GL_2)$ refers to the outer automorphism group of $\GL_2$.
\vskip 5pt

(ii) Let $H = \SL_3$. Then the successive quotients of the filtration $(\mathcal{T}^{\bullet})$ are given by:
\[  \begin{cases}
\mathcal{T}^2/\mathcal{T}^1 = \mathcal{I}(T)^{W(G_2, T_{G_2})},  \\
\mathcal{T}^1/\mathcal{T}^0  = \mathcal{I}_{cusp}(\GL_{2,l})^{{\rm Out}(\GL_2)},  \\
\mathcal{T}^0 = \mathcal{F}^0 = \mathcal{SI}_{cusp}(\SL_3).
\end{cases} \]
Here, we have  made use of the natural identification $T_{\SL_3} \simeq T_{G_2}$ to transport the action of $W(G_2, T_{G_2})$ to $T_{\SL_3}$, and under this transport of structure, $W(G_2, T_{G_2}) = W(\SL_3, T_{\SL_3}) \times S_2$ with the nontrivial element of $S_2$ acting by inverting on $T_{\SL_3}$.

 \vskip 10pt

\vskip 10pt

\noindent (iii) Let $H = \SO_4$. Then the successive quotients of the filtration $(\mathcal{T}^{\bullet})$ are given by:
\[  \begin{cases}
\mathcal{T}^2/\mathcal{T}^1 =  \mathcal{I}(T)^{W(G_2,T_{G_2})},\\
\mathcal{T}^1/\mathcal{T}^0  = \mathcal{I}_{cusp}(\GL_{2,l})^{{\rm Out}(\GL_2)} \oplus   \mathcal{I}_{cusp}(\GL_{2,s})^{{\rm Out}(\GL_2)},\\
\mathcal{T}^0 = \mathcal{F}^0 = \mathcal{I}_{cusp}(\SO_4).
\end{cases} \]
Here, we have again made use of the natural identification $T_{\SO_4} \simeq T_{G_2}$ to transport the action of $W(G_2, T_{G_2})$ to $T_{\SO_4}$.
\end{prop}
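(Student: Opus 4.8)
The plan is to read off each graded piece $\mathcal{T}^k/\mathcal{T}^{k-1}$ from the description of the image of ${\rm trans}_H$ as the space of $H$-components of compatible families (Proposition \ref{P:lt}(ii)) together with the structure theory of $\mathcal{SI}(H)$ recalled above. The preliminary step is to compute, in the Chevalley--Steinberg coordinates of \S\ref{SS:CS}, the relative Weyl groups occurring in $\mathcal{F}^k/\mathcal{F}^{k-1}=\bigoplus_{M\in\mathcal{L}_k(H)}\mathcal{SI}_{cusp}(M)^{W(H,M)}$. For a maximal Levi $M\simeq\GL_2$ of $G_2$ the relative Weyl group $W(G_2,M)$ is generated by the class of the longest element $-1\in W(G_2)$; it acts as $-1$ on the torus of $M$ and interchanges the two roots of $M$, hence through ${\rm Out}(\GL_2)$. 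For a maximal Levi $M\simeq\GL_2$ of $\SO_4$, $W(\SO_4,M)$ is generated by the reflection in the complementary $A_1$, which fixes the derived subgroup of $M$ and inverts its central torus, again an element of the outer class. By contrast $W(\SL_3,\GL_{2,l})$ is trivial (as already noted above). Finally, under the identifications $T_{\SL_3}\simeq T_{G_2}\simeq T_{\SO_4}$ coming from the common dual torus $(T^{\vee})^{\theta^{\vee}}$, the short roots of $G_2$ coincide with the roots of $\SL_3^{\vee}=\PGL_3$ (because, restricted to $(T^{\vee})^{\theta^{\vee}}$, the representation ${\rm std}$ of $\Spin_8(\C)$ is both the adjoint representation of $\PGL_3$ and the sum $1\oplus 7$ for $G_2$), while $\mathfrak{so}_4\subset\mathfrak{g}_2$ realizes an orthogonal long/short $A_1\times A_1$; consequently $W(\SL_3,T_{\SL_3})$ and $W(\SO_4,T_{\SO_4})$ are subgroups of $W(G_2,T_{G_2})$, with $W(G_2,T_{G_2})=W(\SL_3,T_{\SL_3})\times S_2$, the $S_2$ generated by $-1$.

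Granting these, I run the argument by filtration degree. Since ${\rm trans}$ is an isomorphism on cuspidal parts (Proposition \ref{P:lt}(i)), $\mathcal{T}^0=\mathcal{F}^0$ is the cuspidal subspace in all three cases. For $k\ge 1$, the definition of a compatible family forces an element of $\mathcal{T}^k/\mathcal{T}^{k-1}\subset\bigoplus_{M\in\mathcal{L}_k(H)}\mathcal{SI}_{cusp}(M)^{W(H,M)}$ to be a tuple whose $M$-component, transported through the admissible isomorphisms, lies simultaneously in $\mathcal{SI}_{cusp}(M)^{W(H',M)}$ for every endoscopic group $H'$ having $M$ among its Levi subgroups. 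Hence
\[ \mathcal{T}^k/\mathcal{T}^{k-1}\;=\;\bigoplus_{M\in\mathcal{L}_k(H)}\mathcal{SI}_{cusp}(M)^{\langle\, W(H',M)\,:\, M\subset H'\rangle}. \]
For $M=\GL_{2,l}$ all three groups contribute and the group generated by the $W(H',M)$ is ${\rm Out}(\GL_2)$; for $M=\GL_{2,s}$ (present only in $G_2$ and $\SO_4$) it is again ${\rm Out}(\GL_2)$; and for $M=T$ it is $W(G_2,T_{G_2})$ by the inclusions just established. Substituting, and using $\mathcal{SI}_{cusp}(\GL_2)=\mathcal{I}_{cusp}(\GL_2)$ and $\mathcal{SI}(T)=\mathcal{I}(T)$, gives precisely the three displayed systems of equalities: the $\GL_{2,\bullet}$-cuspidal summands are cut down to their ${\rm Out}(\GL_2)$-invariants, the $\GL_{2,s}$-summand is absent for $\SL_3$, and $\mathcal{T}^2/\mathcal{T}^1=\mathcal{I}(T_{G_2})^{W(G_2,T_{G_2})}$ in all three cases.

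The identity above proves the inclusion ``$\subseteq$'' using only the compatibility conditions. For the reverse inclusion one must show that every tuple on the right is realized by an actual compatible family $(f_{G_2},f_{\SO_4},f_{\SL_3})$; this is done by induction on the filtration degree, lifting the datum to some $f_H\in\mathcal{SI}(H)$, correcting by a lower-filtration element so that all restrictions of $f_H$ to the $\GL_2$-Levi subgroups become ${\rm Out}(\GL_2)$-invariant (the anti-invariant part of such a restriction is cuspidal on the Levi, hence in the image of ${\rm rest}^H_M$ on the previous filtration step), and then constructing the companions $f_{H'}$ using surjectivity of the restriction maps onto $\mathcal{SI}(M)^{W(H',M)}$ and averaging over ${\rm Out}(\GL_2)$ — which is licit because ${\rm Out}(\GL_{2,\bullet})$ acts on $T$ through an element of $W(G_2,T_{G_2})$. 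I expect the main obstacle to be the root-theoretic bookkeeping of the first paragraph: correctly identifying the three relative Weyl groups of the $\GL_2$-Levi subgroups — in particular the contrast between the trivial $W(\SL_3,\GL_{2,l})$ and the ${\rm Out}(\GL_2)$-actions of $W(G_2,\GL_{2,\bullet})$ and $W(\SO_4,\GL_{2,\bullet})$ — and keeping the three maximal tori, with their Weyl groups viewed inside $W(G_2)$, consistent under the identifications furnished by the fixed $\theta$-pinning. Once this is in place, everything else is formal.
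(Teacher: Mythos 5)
Your proposal is correct and takes essentially the same route as the paper, whose proof is just a pointer to \cite[\S I.4.3, Lemme]{MW1} together with the definition of compatible families: you fill in exactly those details, namely the relative Weyl group computations ($W({\rm G}_2,\GL_{2,\bullet})$ and $W(\SO_4,\GL_{2,\bullet})$ acting through ${\rm Out}(\GL_2)$, $W(\SL_3,\GL_{2,l})$ trivial, and $W(\SL_3,T),\,W(\SO_4,T)\subset W({\rm G}_2,T_{G_2})$ under the admissible identifications), the inclusion forced by the compatibility conditions, and the construction of compatible families via the filtration to get the reverse inclusion. No gaps worth flagging beyond the level of detail the paper itself omits.
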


\begin{proof}
This can be deduced from \cite[\S I.4.3, Lemme, Pg. 80]{MW1} and the definition of ``compatible families"; we omit the details. 
\end{proof}

\begin{cor} \label{C:image}
The map  ${\rm trans}_H$ is surjective if $H = G_2$, but is not surjective if $H = \SL_3$ or $\SO_4$. When $H = \SL_3$, the image of ${\rm trans}_{\SL_3}$ contains the subspace $\mathcal{SI}(\SL_3)^{{\rm Out}(\SL_3)}$ of $\mathcal{SI}(\SL_3)$ fixed by the outer automorphism group of $\SL_3$. 
\end{cor}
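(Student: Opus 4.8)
The plan is to deduce all three assertions from the explicit description of the graded pieces of $(\mathcal{T}^{\bullet})$ in Proposition \ref{P:image}, by comparing them level by level with those of the standard filtration $(\mathcal{F}^{\bullet})$ on $\mathcal{SI}(H)$. Since $\mathcal{T}^k = \mathcal{F}^k \cap {\rm trans}_H(I^{\theta}(G))$, each $\mathcal{T}^k/\mathcal{T}^{k-1}$ is literally a subspace of $\mathcal{F}^k/\mathcal{F}^{k-1}$; a straightforward d\'evissage then shows $\mathcal{T}^2 = \mathcal{F}^2$ as soon as $\mathcal{T}^0 = \mathcal{F}^0$ and $\mathcal{T}^k/\mathcal{T}^{k-1} = \mathcal{F}^k/\mathcal{F}^{k-1}$ for $k = 1, 2$, and conversely a single strict inclusion among the graded pieces forces $\mathcal{T}^2 \subsetneq \mathcal{F}^2$. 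The inputs I will use are $\mathcal{SI}_{cusp}(\GL_2) = \mathcal{I}_{cusp}(\GL_2)$ and $\mathcal{SI}(S) = \mathcal{I}(S)$ for a torus $S$ (both already used above), together with the standard fact that each of the two maximal Levi subgroups of $G_2$ has relative Weyl group isomorphic to $\Z/2$ acting on that $\GL_2$ through its outer automorphism group.

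For $H = G_2$, these inputs turn Proposition \ref{P:image}(i) into the assertion that $\mathcal{T}^k/\mathcal{T}^{k-1}$ fills out all of $\mathcal{F}^k/\mathcal{F}^{k-1}$ at every level: level $0$ is $\mathcal{F}^0 = \mathcal{T}^0$; at level $1$ both sides equal $\mathcal{I}_{cusp}(\GL_{2,l})^{{\rm Out}(\GL_2)} \oplus \mathcal{I}_{cusp}(\GL_{2,s})^{{\rm Out}(\GL_2)}$; and at level $2$ both equal $\mathcal{I}(T_{G_2})^{W(G_2, T_{G_2})}$. Hence $\mathcal{T}^2 = \mathcal{SI}(G_2)$, i.e. ${\rm trans}_{G_2}$ is surjective. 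For $H = \SL_3$ or $\SO_4$, by contrast, the torus level already obstructs surjectivity: Proposition \ref{P:image} gives $\mathcal{T}^2/\mathcal{T}^1 = \mathcal{I}(T)^{W(G_2, T_{G_2})}$ in both cases, whereas $\mathcal{F}^2/\mathcal{F}^1 = \mathcal{I}(T_H)^{W(H, T_H)}$ with $W(\SL_3, T_{\SL_3})$ of order $6$ and $W(\SO_4, T_{\SO_4})$ of order $4$, each sitting (under the identification $T_H \simeq T_{G_2}$) strictly inside the order-$12$ group $W(G_2, T_{G_2})$. As these finite groups act faithfully on $X_*(T_H)$ and $F$ is local of characteristic $0$, the extra Weyl elements move some $W(H, T_H)$-invariant regular semisimple orbital integral on $T_H(F)$, so $\mathcal{I}(T)^{W(G_2, T_{G_2})} \subsetneq \mathcal{I}(T_H)^{W(H, T_H)}$; thus $\mathcal{T}^2 \subsetneq \mathcal{SI}(H)$ and ${\rm trans}_H$ is not surjective. (For $\SL_3$ one could instead read off the obstruction at level $1$, where $W(\SL_3, \GL_{2,l}) = 1$ but the $\mathcal{T}$-piece carries only the ${\rm Out}(\GL_2)$-invariants.)

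For the last assertion, let $\sigma$ be the nontrivial element of ${\rm Out}(\SL_3) \simeq \Z/2$, represented by $g \mapsto {}^t g^{-1}$; inner automorphisms act trivially on $\mathcal{SI}(\SL_3)$, so $\sigma$ is well-defined there, and it preserves the filtration $\mathcal{F}^{\bullet}$ since it permutes Levi subgroups preserving corank. I would first check that $\mathcal{T}^2 = {\rm trans}_{\SL_3}(I^{\theta}(G))$ is $\sigma$-stable: for $f$ in this image, choose by Proposition \ref{P:lt}(ii) a compatible family $(f_{G_2}, f_{\SO_4}, f)$; then ${\rm rest}^{\SL_3}_{\GL_{2,l}}(f) = {\rm rest}^{G_2}_{\GL_{2,l}}(f_{G_2})$ lies in $\mathcal{SI}(\GL_{2,l})^{W(G_2, \GL_{2,l})} = \mathcal{SI}(\GL_{2,l})^{{\rm Out}(\GL_2)}$, and since $\sigma$ acts on the Levi $\GL_{2,l} \subset \SL_3$ through ${\rm Out}(\GL_2)$ this restriction is $\sigma$-fixed, so $(f_{G_2}, f_{\SO_4}, \sigma f)$ is again compatible and $\sigma f \in \mathcal{T}^2$. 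Next, since $\sigma$ acts on $\GL_{2,l} \subset \SL_3$ through ${\rm Out}(\GL_2)$ and on $T_{\SL_3}$ by inversion --- precisely the $S_2$-factor in the decomposition $W(G_2, T_{G_2}) = W(\SL_3, T_{\SL_3}) \times S_2$ of Proposition \ref{P:image}(ii) --- and since $W(\SL_3, \GL_{2,l}) = 1$, one obtains $(\mathcal{F}^0)^{\sigma} \subseteq \mathcal{F}^0 = \mathcal{T}^0$, $(\mathcal{F}^1/\mathcal{F}^0)^{\sigma} = \mathcal{I}_{cusp}(\GL_{2,l})^{{\rm Out}(\GL_2)} = \mathcal{T}^1/\mathcal{T}^0$, and $(\mathcal{F}^2/\mathcal{F}^1)^{\sigma} = \mathcal{I}(T)^{W(G_2, T_{G_2})} = \mathcal{T}^2/\mathcal{T}^1$. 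Since taking $\sigma$-invariants is exact in characteristic $0$ and both $(\mathcal{F}^{\bullet})^{\sigma}$ and $\mathcal{T}^{\bullet}$ are $\sigma$-stable, a d\'evissage with $\sigma$-equivariant choices of lifts (available by averaging over $\sigma$) gives $(\mathcal{F}^k)^{\sigma} \subseteq \mathcal{T}^k$ for all $k$, whence $\mathcal{SI}(\SL_3)^{{\rm Out}(\SL_3)} = (\mathcal{F}^2)^{\sigma} \subseteq {\rm trans}_{\SL_3}(I^{\theta}(G))$, as required.

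The step I expect to require genuine care is the last one: identifying the ${\rm Out}(\SL_3)$-action on the Levi and torus strata of $\mathcal{SI}(\SL_3)$ with the ``extra'' symmetries recorded in Proposition \ref{P:image} (the $S_2$ acting by inversion on $T_{\SL_3}$, and the outer automorphism of each $\GL_2$). Everything else --- the d\'evissage, the computation of the relative Weyl groups in $G_2$, and the faithfulness and nontriviality of the various finite-group actions on orbital integrals --- is routine.
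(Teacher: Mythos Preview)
Your proposal is correct and follows the same approach as the paper: compare the graded pieces $\mathcal{T}^k/\mathcal{T}^{k-1}$ supplied by Proposition \ref{P:image} with the graded pieces $\mathcal{F}^k/\mathcal{F}^{k-1}$ of the full filtration on $\mathcal{SI}(H)$ (computed for $H=\SL_3$ in (\ref{E:Fil1}) and (\ref{E:Fil2})). The paper's own proof is a one-line sketch that simply points to this comparison and omits all details; your d\'evissage, the verification that $W(G_2,\GL_{2,\bullet})$ acts through ${\rm Out}(\GL_2)$, and the argument that ${\rm trans}_{\SL_3}(I^{\theta}(G))$ is $\sigma$-stable (so that one may average lifts) are exactly the details one would need to supply.
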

 \vskip 5pt
\begin{proof}
For the case $H = \SL_3$, one may compare Proposition \ref{P:image}(i) and the equations (\ref{E:Fil1}) and (\ref{E:Fil2}). The other cases are similar and so we omit the details. 
\end{proof}

 \vskip 5pt

\subsection{\bf Spectral transfer} Let us now specialize to the case $H = \SL_3$. The dual of the transfer map ${\rm trans}_{\SL_3}$ is a map
\[  {\rm trans}_{\SL_3}^* : \mathcal{SI}(\SL_3)^* \longrightarrow \mathcal{I}(\PGSO_8)^* \]
from the space of stably invariant distributions of $SL_3(F)$ to the space of invariant distributions on $\PGSO_8(F)$. 
The fact that the transfer map ${\rm trans}_{\SL_3}$ is not surjecitive implies that this dual transfer map is not injective. However, we note:
\vskip 5pt

\begin{lemma} \label{L:nonzero}
Suppose that $\pi$ is a semisimple admissible representation of $\SL_3(F)$ whose Harish-Chandra character $\Theta_{\pi}$ is a stable distribution.  
Then $  {\rm trans}_{\SL_3}^* (\Theta_{\pi}) \ne 0$.
\end{lemma}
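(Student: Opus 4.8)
The plan is to argue by contradiction: suppose $\mathrm{trans}_{\SL_3}^*(\Theta_\pi) = 0$ as an invariant distribution on $\PGSO_8(F)$. Since $\mathrm{trans}_{\SL_3}^*$ is the linear-algebraic dual of $\mathrm{trans}_{\SL_3}$, this vanishing says precisely that $\Theta_\pi$, viewed as a functional on $\mathcal{SI}(\SL_3)$, annihilates the image of $\mathrm{trans}_{\SL_3}$. By Corollary \ref{C:image}, that image contains the full subspace $\mathcal{SI}(\SL_3)^{\mathrm{Out}(\SL_3)}$ of $\mathrm{Out}(\SL_3)$-fixed stable orbital integrals. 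So I would deduce that $\Theta_\pi$ kills $\mathcal{SI}(\SL_3)^{\mathrm{Out}(\SL_3)}$.

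**Key steps.** First I would make precise the pairing between the character $\Theta_\pi$ and an element of $\mathcal{SI}(\SL_3)$: for $f_H \in C_c^\infty(\SL_3(F))$ with stable orbital integral $\mathcal{SO}(-, f_H)$, the value is $\Theta_\pi(f_H) = \mathrm{tr}\,\pi(f_H)$, and this is well-defined on $\mathcal{SI}(\SL_3)$ precisely because $\Theta_\pi$ is assumed stable (so it factors through the map $f_H \mapsto \mathcal{SO}(-,f_H)$). Second, I would observe that the outer automorphism $\eta$ of $\SL_3$ (transpose-inverse) acts on $\mathcal{SI}(\SL_3)$, and that averaging gives a projector $P = \tfrac12(1 + \eta)$ onto $\mathcal{SI}(\SL_3)^{\mathrm{Out}(\SL_3)}$. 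Pairing $\Theta_\pi$ with $P(f_H)$ computes $\tfrac12(\mathrm{tr}\,\pi(f_H) + \mathrm{tr}\,\pi^\eta(f_H))$, where $\pi^\eta$ is the $\eta$-twist. The assumed vanishing forces $\mathrm{tr}\,(\pi \oplus \pi^\eta)(f_H) = 0$ for all $f_H$, which by linear independence of characters of (semisimple) admissible representations forces $\pi \oplus \pi^\eta = 0$, i.e. $\pi = 0$ — contradicting that $\pi$ is a nonzero representation. Hence $\mathrm{trans}_{\SL_3}^*(\Theta_\pi) \ne 0$.

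**Main obstacle.** The delicate point is the reduction to a genuine statement about functions and the invocation of linear independence of characters: I need that $\mathrm{trans}_{\SL_3}^*(\Theta_\pi) = 0$ really does imply $\Theta_\pi$ vanishes on $\mathcal{SI}(\SL_3)^{\mathrm{Out}(\SL_3)}$, and for this it is important that $\mathcal{SI}(\SL_3)^{\mathrm{Out}(\SL_3)}$ sits inside the image of $\mathrm{trans}_{\SL_3}$ as a subspace of actual stable orbital integrals of compactly-supported functions (not merely a formal completion), which is exactly what Corollary \ref{C:image} provides. A secondary technical care is that "semisimple admissible" is used to guarantee that $\Theta_\pi$ is a finite sum of irreducible characters, so that linear independence of characters applies verbatim; if $\pi$ were a more exotic object one would need to be careful, but here the hypothesis is tailored to avoid that. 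Everything else is routine: the existence and smoothness of the projector $P$, the identity $\Theta_\pi(f_H^\eta) = \Theta_{\pi^\eta}(f_H)$, and the fact that twisting by $\eta$ does not annihilate a nonzero representation.
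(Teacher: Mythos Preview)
Your proof is correct and follows essentially the same approach as the paper: reduce via Corollary~\ref{C:image} to showing $\Theta_\pi$ does not vanish on $\mathcal{SI}(\SL_3)^{\mathrm{Out}(\SL_3)}$, then average over the outer automorphism to identify $\Theta_\pi$ on this subspace with the character of $\pi \oplus \pi^\eta$, and conclude by nonvanishing of characters of nonzero semisimple admissible representations. The only cosmetic difference is that the paper additionally records the identification $\pi^\sigma \simeq \pi^\vee$ (since the outer automorphism is transpose-inverse), but your argument does not need this.
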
 

 \begin{proof}
 By the description of the image of ${\rm trans}_{\SL_3}$ given in Corollary \ref{C:image}, it suffices to show that $\Theta_{\pi}$ is not identically zero on $\mathcal{SI}(\SL_3)^{{\rm Out}(\SL_3)}$.  
 Let $\sigma$ be a nontrivial outer automorphism of $\SL_3$. Then the space $\mathcal{SI}(\SL_3)^{{\rm Out}(\SL_3)}$ is spanned by the stable orbital integrals of test functions of the form $f + f^{\sigma}$. 
 Now since $\pi^{\sigma} \simeq \pi^{\vee}$, we have 
 \[ \Theta_{\pi}(f^{\sigma}) =  \Theta_{\pi^{\sigma}}(f) = \Theta_{\pi^{\vee}}(f),  \]
  so that
 \[  \Theta_{\pi}(f + f^{\sigma}) = \Theta_{\pi \oplus \pi^{\vee}}(f). \]
 Since $\pi \oplus \pi^{\vee}$ is a nonzero admissible semisimple representation, it is thus clear that $\Theta_{\pi \oplus \pi^{\vee}}$ is nonzero on some test function  $f$. 
  \end{proof}

\vskip 10pt

\section{\bf Unramified Spectral Transfer} \label{SS:local-unram-T}
We now specialize to the setting of a non-Archimedean local field $F$ with ring of integers $\mathcal{O}_F$ (so that there is a unique K-form, namely the split $\PGSO_8$). Recall that in \S \ref{SS:CS}, we have started  with  the Chevalley group  $\PGSO_8$ defined over $\Z$ with a given Chevalley-Steinberg system of \'epinglage and equipped with a triality automorphism $\theta$. Hence we have a hyperspecial subgroup $K = G(\mathcal{O}_F)$ which is stablized by $\theta$. 
Since $\PGSO_8$ is adjoint, there is a unique conjugacy class of hyperspecial subgroups.
\vskip 5pt

\subsection{\bf Unramified L-packets}
 Now let $H$ be one of the elliptic endoscopic groups of $(\PGSO_8, \theta)$. In fact, we shall only be interested in the case $H = \SL_3$ and the reader can assume this is the case in the following.  Suppose that we have an unramified L-packet of $H$ associated to a semisimple conjugacy class $s \in H^{\vee}$.
Recall that $s$ determines an unramified character $\chi_s$ of a maximal split torus $T_H$ of $H$ and the L-packet in question consists simply of the constituents of the corresponding principal series representation $I_H(\chi_s)$ (parabolically induced from $\chi_s$) which have nonzero invariant vectors under some hyperspecial subgroups of $H(F)$.
Let us assume for simplicity that  the unramified constituents of  $I_H(\chi_s)$ are generic and unitary. In that case,  $I_H(\chi_s)$ is semisimple and the L-packet consists of all its irreducible constituents.
\vskip 5pt

The stable trace $\Theta_s$ of such an unramified  L-packet is the sum of the Harish-Chandra character of its members and hence (under our hypothesis above) is  simply the Harish-Chandra character of $I_H(\chi_s)$.  It is a stable distribution on $H(F)$ and hence defines  a linear  functional on $\mathcal{SI}(H)$.    We may consider the pullback of $\Theta_s$ under the transfer map ${\rm trans}_H$ of Proposition \ref{P:lt}(iii). For the purpose of applying the trace formula, we need to determine the unramified spectral transfer $\Theta_s \circ {\rm trans}_H$ precisely.

\vskip 5pt

  To formulate the answer, consider the unramified L-packet  of $\PGSO_8(F)$ corresponding to the semisimple conjugacy class of $\xi_H(s)$, where $\xi_H: H^{\vee} \hookrightarrow \Spin_8(\C)$ is the natural embedding. This unramified L-packet consists of the unramified constituents of an unramified principal series representation which can be described as follows. 
 The restriction of the embedding $\xi_H$ defines an isomorphism
  \[  \xi_H:  T_H^{\vee}  \longrightarrow (T^{\vee})^{\theta^{\vee}}  \subset T^{\vee}, \]
  where $T$ is a maximal split torus of $\PGSO_8$.  This injective  map of dual torus arises from a surjective morphism 
  \begin{equation} 
  \label{E:eta1}
      \eta: T \longrightarrow T_H\end{equation}
  satisfying
  \begin{equation} \label{E:eta2}
   {\rm Ker}(\eta) = {}^{1 - \theta}T := \text{ image of $[(1- \theta):T \rightarrow T]$.} \end{equation}
  By pulling back via $\eta$,  one obtains an unramified character  $\chi_s \circ \eta$  of $T(F)$.  Then the unramified principal series representation of $\PGSO_8(\F)$ corresponding to $\xi_H(s)$ is the representation
     $I(\chi_s \circ \eta)$ parabolically induced from $\chi_s \circ \eta$. 
  \vskip 5pt
  
\subsection{\bf Extensions to twisted space}   Since $\xi_H(s)$ is fixed by $\theta^{\vee}$, the character $\chi_s \circ \eta$ is $\theta$-invariant, which implies that 
     the isomorphism class of $I(\chi_s \circ \eta)$  is fixed by $\theta$ and hence  can be extended to the semi-direct product $\PGSO_8(F) \rtimes \langle \theta \rangle$ (or rather to the twisted space $\PGSO_8(F) \cdot \theta$).
     The extension is however not unique.     The following lemma gives several equivalent ways of fixing such an extension.
     
     \vskip 5pt
     
     \begin{lemma}. \label{L:extension}
     The following three extensions of $I(\chi_s \circ \eta)$ to $\PGSO_8(F) \rtimes \langle \theta \rangle$ are the same:
    \vskip 5pt
    \begin{itemize}
    \item[(i)] (Extension by induction)  Extend $\chi_s \circ \eta$ to $T(F) \rtimes \langle \theta \rangle$ by letting $\theta$ act trivially;  by the functoriality of induction, this gives rise to an action of $\theta$ on $I(\chi_s \circ \eta)$, described explicitly  by
   \[  (\theta \cdot \phi) (g) = \phi( \theta^{-1}(g)) \]
    for  any $\phi \in I(\chi_s \circ \eta)$ and $g \in \PGSO_8(F)$. 
    \vskip 5pt

    \item[(ii)]  (Spherical-normalized extension) Extend $I(\chi_s \circ \eta)$ by requiring that $\theta$ acts as identity on  the 1-dimensional space $I(\chi_s \circ \eta)^K$.
    \vskip 5pt
    
    \item[(iii)]  (Whittaker-normalized extension)  For a nontrivial additive character $\psi$ of $F$ of consuctor $\mathcal{O}_F$, consider the Whittaker datum determined by $\psi$ and the fixed Chavalley-Steinberg system of \'epinglage and a corresponding nontrivial Whittaker functional  $\mathcal{W}: I(\chi_s \circ \eta) \longrightarrow \C$. Then the Whittaker datum is fixed by $\theta$ and we  extend $I(\chi_s \circ \eta)$ by requiring that
  \[  \mathcal{W} \circ \theta = \mathcal{W}. \]
      \end{itemize}
      \end{lemma}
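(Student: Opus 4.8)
The plan is to realize the extension in (i) concretely and then check that it already satisfies the conditions singling out (ii) and (iii), which yields (i)$=$(ii) and (i)$=$(iii) simultaneously (and in particular shows these extensions are well defined). Write $\Theta_0$ for the action of $\theta$ on the space of $I(\chi_s\circ\eta)$ given by (i), so $(\Theta_0\phi)(g)=\phi(\theta^{-1}(g))$. The only inputs are that the triality $\theta$ of $\S\ref{SS:CS}$, being attached to the Chevalley--Steinberg \'epinglage, stabilizes $T$ and $B$ (hence also $\bar B$ and the unipotent radicals $U$, $\bar U$) and the hyperspecial subgroup $K=G(\mathcal O_F)$ of $\S\ref{SS:local-unram-T}$, that it preserves the natural Haar measures on $U(F)$ and $\bar U(F)$ (it permutes a $\Z$-basis of root coordinates), and that it fixes the modulus character $\delta_B$ and, by hypothesis, $\chi_s\circ\eta$. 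For (i)$=$(ii): using the Iwasawa decomposition $G(F)=B(F)K$ and $\theta$-stability of $B(F)$ and $K$, the spherical section $\phi_0$ normalized by $\phi_0|_K\equiv 1$ satisfies
\[
(\Theta_0\phi_0)(bk)=\phi_0(\theta^{-1}(b)\,\theta^{-1}(k))=(\chi_s\circ\eta)(\theta^{-1}(b))\,\delta_B^{1/2}(\theta^{-1}(b))=(\chi_s\circ\eta)(b)\,\delta_B^{1/2}(b)=\phi_0(bk)
\]
because $\chi_s\circ\eta$ and $\delta_B$ are $\theta$-invariant. Hence $\Theta_0$ acts as the identity on the line $I(\chi_s\circ\eta)^K$, which is the defining property of (ii).

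For (i)$=$(iii), the first step is to record that the Whittaker datum attached to $\psi$ and the \'epinglage is $\theta$-stable: $\theta$ fixes $B$, and since by $\S\ref{SS:CS}$ it fixes $x_{\alpha_0}$ and cyclically permutes $x_{\alpha_1},x_{\alpha_2},x_{\alpha_3}$, the generic character $\psi_U(u)=\psi(\sum_i\lambda_i(u))$ of $U(F)$, with $\lambda_i$ the coordinate along $x_{\alpha_i}$, satisfies $\psi_U\circ\theta=\psi_U$. Next, realize a nonzero Whittaker functional by the standard (suitably normalized and meromorphically continued) Jacquet integral $\mathcal W(\phi)=\int_{U(F)}\phi(\dot w_0 u)\,\psi_U(u)^{-1}\,du$, where $\dot w_0\in K\cap N_G(T)$ is the Chevalley representative of the long Weyl element $w_0$. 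Since $\theta$ fixes $w_0$ in $W$ and preserves $K$, one has $\theta^{-1}(\dot w_0)\in T(\mathcal O_F)\,\dot w_0$; as $\chi_s\circ\eta$ and $\delta_B$ are trivial on $T(\mathcal O_F)$, this gives $\phi(\theta^{-1}(\dot w_0)\,g)=\phi(\dot w_0 g)$ for all $\phi\in I(\chi_s\circ\eta)$. Combining this with the measure-preserving substitution $u\mapsto\theta(u)$ and $\psi_U\circ\theta=\psi_U$,
\[
\mathcal W(\Theta_0\phi)=\int_{U(F)}\phi(\theta^{-1}(\dot w_0)\,\theta^{-1}(u))\,\psi_U(u)^{-1}\,du=\int_{U(F)}\phi(\dot w_0 u)\,\psi_U(u)^{-1}\,du=\mathcal W(\phi),
\]
so $\Theta_0$ fixes $\mathcal W$, which is the defining property of (iii). (Equivalently, realizing $I(\chi_s\circ\eta)$ as $\mathrm{Ind}_{\bar B}^G(\chi_s\circ\eta)$ and using the ``open cell'' functional $\mathcal W(\phi)=\int_{U(F)}\phi(u)\,\psi_U(u)^{-1}\,du$ avoids the Weyl element entirely.)

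I expect the one genuinely non-formal point to be the treatment of the long Weyl element in (iii). One cannot shortcut it by evaluating $\mathcal W$ on $\phi_0$ and invoking $\Theta_0\phi_0=\phi_0$, since $\mathcal W(\phi_0)$ typically vanishes in our situation: for $H=\SL_3$ with $s$ regular, the $8$-dimensional representation of $\Spin_8(\C)$ restricted to $\tilde{\rm Ad}(\PGL_3)$ is the adjoint module $\mathfrak{sl}_3$, which has a $2$-dimensional fixed subspace, so $\xi_H(s)$ lies on a root wall of $\Spin_8(\C)$, the principal series $I(\chi_s\circ\eta)$ is non-generic, and $\mathcal W(\phi_0)=0$ by Casselman--Shalika. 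Hence one must work with the Jacquet integral directly, and the only delicate point there is that a $\theta$-fixed representative of $w_0$ in $N_G(T)(F)$ need not exist (the obstruction lies in $H^1(\langle\theta\rangle,T(F))\cong\mu_3(F)$); this is bypassed precisely as above, by choosing the representative inside $K$ so that its $\theta$-discrepancy lies in $T(\mathcal O_F)$ and is invisible to the unramified character.
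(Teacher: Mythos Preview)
Your argument for (i)$=$(ii) is the same as the paper's, and your direct verification of (i)$=$(iii) via the Jacquet integral is correct. The paper takes a different, shorter route for the Whittaker part: rather than computing $\mathcal W\circ\Theta_0=\mathcal W$ on all of $I(\chi_s\circ\eta)$, it observes that any extension $A$ must send $\phi_0$ to a scalar multiple $c\phi_0$ (since $\theta(K)=K$), and then the Whittaker normalization $\mathcal W\circ A=\mathcal W$ forces $c\,\mathcal W(\phi_0)=\mathcal W(\phi_0)$; invoking the Casselman--Shalika formula to get $\mathcal W(\phi_0)\neq 0$ yields $c=1$, so the Whittaker-normalized extension is spherical-normalized. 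Your approach has the advantage of not depending on nonvanishing of $\mathcal W(\phi_0)$ and hence applies without any genericity hypothesis on $s$; the paper's approach is a one-line reduction once one is willing to cite Casselman--Shalika.

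Your final paragraph, however, is mistaken. Lying on a root wall of $\Spin_8(\C)$ means that some root satisfies $\alpha^\vee(\xi_H(s))=1$; this governs \emph{reducibility} of the principal series, not vanishing of the spherical Whittaker value. The Casselman--Shalika formula gives (up to normalization) $\mathcal W(\phi_0)=\prod_{\alpha>0}\bigl(1-q^{-1}\alpha^\vee(\xi_H(s))\bigr)$, which vanishes only when some $\alpha^\vee(\xi_H(s))=q$, a quite different condition. In particular, ``on a root wall'' does not imply ``non-generic'': already for $\GL_2$, the principal series $I(\chi,\chi)$ sits on the root wall, is irreducible, and is generic. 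So the shortcut you rule out is exactly the one the paper uses, and it is valid (under the standing unitarity/genericity assumption on the $H$-packet, the condition $\alpha^\vee(\xi_H(s))=q$ does not arise). Your direct computation is a perfectly good alternative proof, but the motivation you give for needing it does not hold.
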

   \vskip 5pt
   
   \begin{proof}
   Let $\phi_0 \in I(\chi_s \circ \eta)$ be a  nonzero $K$-unramfiied vector which is unique up to scaling. Then $\phi_0$ is characterized by the requirement that $\phi_0|_K$ is a constant function. Since $\theta$ stabilizes $K$, one deduces easily that the extensions in (i) and (ii) are the same. On the other hand, by the Casselman-Shalika formula, one knows that $\mathcal{W}(\phi_0) \ne 0$. Hence the Whittaker-normalzied extension is also spherical-normalized, showing that the extensions in (ii) and (iii) are the same. 
      \end{proof}
   \vskip 5pt

We shall adopt the extension of $I(\chi_s \circ \eta)$ to $\PGSO_8(F) \rtimes \langle \theta \rangle$  supplied by the above lemma, denoting the resulting action of $\theta$  by $I(\theta)$. Using this extension, we define the $\theta$-twisted trace of $I(\chi_s \circ \eta)$ as the Harish-Chandra character of the extended representation, restricted to the coset $\PGSO_8(F) \cdot \theta$.  
More precisely, it is given by  the linear functional
\[  \Theta^{\theta}_{\xi_H(s)}:  f \mapsto   {\rm Tr}( I(\theta) \circ  I(\chi_s \circ \eta)(f) ), \quad \text{ for $f \in C^{\infty}_c(\PGSO_8(F))$.} \]
which is $\theta$-conjugacy-invariant  and hence factors to a linear functional on $\mathcal{I}^{\theta}(\PGSO_8)$.
Observe that if $f \in \mathcal{H}_{\PGSO_8}$, then 
\[   {\rm Tr}( I(\theta) \circ  I(\chi_s \circ \eta)(f) ) =  {\rm Tr}(  I(\chi_s \circ \eta)(f) ), \]
since  $I(\chi_s \circ \eta)(f)$ has image  contained in $I(\chi_s \circ \eta)^K$ and $I(\theta)$ acts as identity on $I(\chi_s \circ \eta)^K$. 
Hence, 
\[ \Theta^{\theta}_{\xi_H(s)} = \Theta_{\xi_H(s)} \quad \text{on $\mathcal{H}_{\PGSO_8}$.} \]
\vskip 5pt

\subsection{\bf Unramified spectral transfer}
Now one has:
\vskip 5pt

\begin{prop} \label{P:unram-spec-T}
In the above context,  one has the twisted endoscopic character identity:
\[  \Theta_s \circ {\rm trans}_H = \Theta^{\theta}_{\xi_H(s)}   \]
as linear functionals on $\mathcal{I}^{\theta}(\PGSO_8)$. In particular, the twisted endoscopic character identity defines a lifting of unramified L-packets from $H$ to $\PGSO_8$ which is functorial with respect to the embedding
\[  \xi_H: H^{\vee} \longrightarrow \PGSO_8^{\vee} = \Spin_8(\C). \] 
\end{prop}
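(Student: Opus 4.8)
The plan is to obtain the character identity from the fundamental lemma for spherical Hecke algebras (Proposition~\ref{P:lt}(iii)): first deduce it on $\mathcal{H}_{\PGSO_8}$, then extend it to all of $\mathcal{I}^{\theta}(\PGSO_8)$ by an explicit comparison of (twisted) principal-series characters along the norm correspondence; the functoriality statement is then just a reformulation.

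\emph{Step 1: the identity on $\mathcal{H}_{\PGSO_8}$.} For $f\in\mathcal{H}_{\PGSO_8}$, the commuting square of Proposition~\ref{P:lt}(iii) sends the $\theta$-twisted orbital integral of $f$ to the stable orbital integral of $\xi_H^*(f)\in\mathcal{H}_H$. Pairing with the stable character $\Theta_s$ of the unramified $H$-packet gives the trace of the spherical function $\xi_H^*(f)$ on $I_H(\chi_s)$, which by the Satake isomorphism is $\widehat{\xi_H^*(f)}(s)$. Since $\xi_H^*$ is, via Satake, dual to the embedding $\xi_H\colon H^{\vee}\hookrightarrow\Spin_8(\C)$, this equals $\widehat{f}(\xi_H(s))$, i.e.\ the trace of $f$ on the unramified principal series $I(\chi_s\circ\eta)$ of $\PGSO_8(F)$ --- here one uses that $\xi_H\colon T_H^{\vee}\to(T^{\vee})^{\theta^{\vee}}$ is dual to $\eta$ of (\ref{E:eta1})--(\ref{E:eta2}), so that $I(\chi_s\circ\eta)$ has Satake parameter $\xi_H(s)$. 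Finally, the observation immediately preceding Proposition~\ref{P:unram-spec-T} --- a consequence of Lemma~\ref{L:extension} and the non-vanishing of the Whittaker value of the spherical vector (Casselman--Shalika) --- gives $\Theta^{\theta}_{\xi_H(s)}(f)=\Theta_{\xi_H(s)}(f)$ for all $f\in\mathcal{H}_{\PGSO_8}$. Hence the two functionals agree on $\mathcal{H}_{\PGSO_8}$.

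\emph{Step 2: extension to $\mathcal{I}^{\theta}(\PGSO_8)$.} Here I would compute both functionals explicitly. The twisted character $\Theta^{\theta}_{\xi_H(s)}$, being the twisted character of a representation parabolically induced from the Borel of $\PGSO_8$, is computed on the $\theta$-regular semisimple set by a twisted Weyl integration formula; on each $\theta$-stable maximal torus it is, up to the twisted Weyl discriminant, an explicit sum over (a subgroup of) $W(G,T)^{\theta}$ of conjugates of $\chi_s\circ\eta$. The norm correspondence underlying ${\rm trans}_H$ matches the $\theta$-stable maximal tori of $\PGSO_8$ with the maximal tori of $H$, and under this matching --- using the natural surjection $W(G,T)^{\theta}\twoheadrightarrow W(H,T_H)$ (surjectivity being reflected in the description $W(G_2,T_{G_2})=W(\SL_3,T_{\SL_3})\times S_2$ of Proposition~\ref{P:image}(ii)), the relation ${\rm Ker}(\eta)={}^{1-\theta}T$ of (\ref{E:eta2}), and the triviality of the transfer factors (legitimate since $\PGSO_8$ is split adjoint and the datum is unramified with trivial cocycle, cf.\ the footnote in \S\ref{SS:Lie alg}) --- this sum is carried precisely to the Weyl sum of $\chi_s$ computing the stable character $\Theta_s$ of $I_H(\chi_s)$. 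In other words, twisted endoscopic transfer sends the stable twisted character of an unramified principal series of $\PGSO_8$ to the stable character of the corresponding unramified principal series of $H$. The description of ${\rm trans}_H(\mathcal{I}^{\theta}(\PGSO_8))$ via the filtration $(\mathcal{F}^{\bullet})$ in Proposition~\ref{P:image}, together with the transitivity of the restriction maps, is what keeps this torus-by-torus comparison consistent across the strata $\mathcal{T}^0\subset\mathcal{T}^1\subset\mathcal{T}^2$. This gives the identity on all of $\mathcal{I}^{\theta}(\PGSO_8)$; and since $I(\chi_s\circ\eta)$ has Satake parameter $\xi_H(s)$, the identity is exactly the statement that the unramified $\PGSO_8$-packet attached to $\xi_H(s)$ is the twisted-endoscopic transfer of the unramified $H$-packet attached to $s$, i.e.\ functoriality along $\xi_H\colon H^{\vee}\to\Spin_8(\C)$.

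I expect the main obstacle to be Step 2, the passage beyond the unit element of $\mathcal{H}_{\PGSO_8}$. The character of an unramified principal series is classical, but transporting it along the twisted norm correspondence requires care with Haar-measure normalizations, with the precise twisted transfer factors in the framework of \cite{MW1}, and with the action of the Weyl group and the map $W(G,T)^{\theta}\to W(H,T_H)$ on each $\theta$-stable maximal torus --- this is where the fine structure of twisted endoscopy for $(\PGSO_8,\theta)$ genuinely enters, beyond the fundamental lemma alone.
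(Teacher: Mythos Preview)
Your approach is essentially the paper's: deduce the identity on $\mathcal{H}_{\PGSO_8}$ from the fundamental lemma, then for general $f$ compute both sides by (twisted) Weyl integration for the principal series characters and compare via the norm map $\eta$ and the triviality of the transfer factor. The paper's execution of Step~2 is considerably simpler than what you outline, however, and it is worth saying why.

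Because $\Theta_s$ is the character of a representation parabolically induced from the \emph{split} Borel of $H$, it is supported on conjugacy classes meeting the split torus $T_H$; likewise $\Theta^{\theta}_{\xi_H(s)}$ is supported on $\theta$-twisted conjugacy classes meeting the split torus $T$. So there is only \emph{one} torus on each side, and the filtration of Proposition~\ref{P:image}, the torus-by-torus stratification, and the comparison $W(G,T)^{\theta}\twoheadrightarrow W(H,T_H)$ that you invoke are all unnecessary. The paper simply writes
\[
\Theta_s(f_H)=\int_{T_H(F)_{rs}}\chi_s(t_H)\,\mathcal{SO}(t_H,f_H)\,dt_H,
\qquad
\Theta^{\theta}_{\xi_H(s)}(f)=\int_{{}^{1-\theta}T(F)\backslash T(F)_{\theta\text{-}rs}}\chi_s(\eta(t))\,\mathcal{O}^{\theta}(t,f)\,dt,
\]
identifies the domains via the isomorphism $\eta\colon T/{}^{1-\theta}T\simeq T_H$ of (\ref{E:eta1})--(\ref{E:eta2}), observes that the Whittaker-normalized transfer factor is $\Delta(\eta(t),t)=1$ on this split pair, and then uses the very definition of ${\rm trans}_H$ to get $\mathcal{SO}(\eta(t),{\rm trans}_H(f))=\mathcal{O}^{\theta}(t,f)$. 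A change of variable finishes. The Weyl sums you worry about are absorbed into the orbital integrals by the usual change of variable $t\mapsto w\cdot t$, so they never need to be matched explicitly. In short: your plan is correct, but once you note the support condition for principal series characters, the ``main obstacle'' you anticipate largely evaporates.
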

\vskip 5pt

\begin{proof}
For a test function $f \in C^{\infty}_c(\PGSO_8(F))$, we need to show that
\[ \Theta_s ({\rm trans}_H(f)) =  \Theta^{\theta}_{\xi_H(s)} (f). \]
If $f \in \mathcal{H}_{\PGSO_8}$, then this identity follows readily from Proposition \ref{P:lt}(iii). In the general case,
the proof of this identity  is similar to  that of \cite[Prop. 6.2]{CG}.  
\vskip 5pt

We first  compute each side of the purported identity.
For a test function $f_H \in C^{\infty}_c(H(F))$, we first compute $\Theta_s(f_H)$. 
Recall that the Harish-Chandra character $\Theta_s$ of $I_H(\chi_s)$ is given by a locally constant and locally integrable conjugacy-invariant function on the open dense set $H(F)_{rs}$ of regular semisiple elements of $H(F)$. We denote this function by $\Theta_s$ as well, so that
\[   \Theta_s(f_H) = \int_{H(F)_{rs}} \Theta_s( x) \cdot f_H(x) \,dx. \]
Moreover, since $\Theta_s$ is the character of a principal series representation, the function $\Theta_s$ is supported on the conjugacy classes of regular elements of the maximal split torus $T_H$.  Hence, by the Weyl integration formula, one has (cf. \cite[\S 5.8]{CG})
\[
   \Theta_s(f_H) =      \int_{T_H(F)_{rs}}    \chi_s(t_H) \cdot  \mathcal{O}( t_H, f_H) \, dt_H \]
where   $\mathcal{O}(- , f_H)$ is the normalized orbital integral of $f_H$ defined in (\ref{E:orb-int2}).
  Since $t_H$ is a regular element in the maximal split torus, there is a unique conjugacy class in the stable conjugacy class of $t_H$, and hence
  one has $\mathcal{SO}(t_H, f_H)  =  \mathcal{O}(t_H, f_H)$, where  $\mathcal{SO}(t_H, f_H)$ is the stable orbital integral defined in (\ref{E:orb-int3}).
 Hence, we have:
\begin{equation} \label{E:trace1}
   \Theta_s(f_H) =   \int_{T_H(F)_{rs}}    \chi_s(t_H) \cdot  \mathcal{SO}( t_H, f_H) \, dt_H. \end{equation}

\vskip 5pt

For a test function $f \in C^{\infty}_c(\PGSO_8(F))$, we may likewise compute the twisted trace $\Theta^{\theta}_{\xi_H(s)}(f)$ analogously, using a twisted Weyl integration formula (see \cite[\S 5.7 and \S 5.8]{CG}). This gives
\begin{equation} \label{E:trace2}
  \Theta^{\theta}_{\xi_H(s)}(f) =  \int_{ ^{1-\theta}T(F) \backslash T(F)_{\theta-rs}}  \chi_{\xi_H(s)}(t)  \cdot \mathcal{O}^{\theta}(t, f) \, dt \end{equation}
where $T(F)_{\theta-rs}$ denotes the open dense subset of $\theta$-regular elements and $\mathcal{O}_{\theta}(t, f) $ is the $\theta$-twisted orbital integral  of $f$ defined 
in (\ref{E:orb-int1}).
 By (\ref{E:eta1}) and (\ref{E:eta2}), one has  
\[  \eta: T / {}^{1-\theta}T   \simeq T_H \quad \text{and} \quad    \chi_{\xi_H(s)} = \chi_s \circ \eta. \]
In addition, under the orbit correspondence in the theory of twisted endoscopy, 
the pair $(\eta(t), t)  \in T_H(F)_{rs} \times T(F)_{\theta_rs}$ correspond. Moreover, the value of Whittaker-normalized twisted transfer factor on $(\eta(t), t)$ is given by
\[   \Delta(\eta(t), t) = 1, \]
so that by the definition of transfer,
\[  \mathcal{SO}( \eta(t), {\rm trans}_H(f)) =  \mathcal{O}^{\theta}(t, f). \]   
Hence, by a change of variable, setting $t_H = \eta(t)$,   
\begin{equation} \label{E:trace3}
 \Theta^{\theta}_{\xi_H(s)}(f) =  \int_{ T_H(F)}  \chi_s( t_H)  \cdot \mathcal{SO}( t_H, {\rm trans}_H(f)) \,    dt_H. \end{equation}
\vskip 5pt

Comparing (\ref{E:trace1}) and (\ref{E:trace3}) and taking $f_H = {\rm trans}_H(f)$ in (\ref{E:trace1}), we deduce the desired unramified spectral transfer identity.
\end{proof}
\vskip 5pt

It seems to us that this unramified spectral transfer  could and should be shown in the general context of twisted endoscopy; this will eliminate the need to deal with it on a case-by-case basis.
\vskip 5pt

\subsection{\bf The case $H = \SL_3$} 
Let us specialize further to the case of the twisted endoscopic group $H = \SL_3$ and explicate some of the constructions discussed above.
 In this case,  as we have noted before, the natural map of dual groups:
\[ \xi_H : H^{\vee} = \PGL_3(\C) \longrightarrow \PGSO_8^{\vee} = \Spin_8(\C) \]
  is the unique lift of  the adjoint representation 
\[   {\rm Ad}: \PGL_3(\C) \longrightarrow \SO_8(\C) \]
of $\PGL_3(\C)$ on its Lie algebra, from $\SO_8(\C)$ to $\Spin_8(\C)$. 
\vskip 5pt

Let us explicate  explicate the map 
\[  \xi: T_H^{\vee}  \longrightarrow T^{\vee} \]
and its dual
\[ \eta:  T \longrightarrow T_H \subset \SL_3. \]
We shall regard $T_H^{\vee}$ as the diagonal torus in $\PGL_3(\C)$ and recall from
 the discussion in \S \ref{SS:Lie alg} that  the torus $T^{\vee}$ is given by
\[ T^{\vee} = \{ \alpha_0(u) \cdot \alpha_1(t_1) \cdot  \alpha_2(t_2) \cdot \alpha_3 (t_3): u, t_i \in \mathbb{G}_m \}.\]
  Then the map $\xi$ is given on the diagonal maximal torus by:
\[  \xi:   \left( \begin{array}{ccc}
a & & \\
& b &  \\
& & c \end{array} \right) \mapsto  u\left(\frac{ab}{c^2}, \frac{a}{c}\right) = \alpha_0\left(\frac{ab}{c^2}\right) \cdot    \alpha_1\left(\frac{a}{c}\right) \cdot  \alpha_2\left(\frac{a}{c}\right) \cdot \alpha_3 \left( \frac{a}{c}\right), \]
where the element $s(u,t) \in (T^{\vee})^{\theta}$ was defined in (\ref{E:torus}). On the other hand , we may realize $T_H$ as the diagonal torus in $H= \SL_3$, and identify the torus $T$ with $\mathbb{G}_m^4$ via:
\[  T = \{ \omega_0^{\vee}(u) \cdot\omega_1^{\vee}(t_1)  \cdot\omega_2^{\vee}(t_2)  \cdot\omega_3^{\vee}(t_3): u, t_i \in \mathbb{G}_m \}, \]  
where $\{\omega_0^{\vee} ,\omega_1^{\vee}, \omega_2^{\vee}, \omega_3^{\vee} \}$  is the basis of $X_*(T)$ formed by fundamental coweights, so that $\langle \alpha_i, \omega_j^{\vee} \rangle = \delta_{ij}$. 
Then  the dual map $\eta: T \longrightarrow T_H$ is  given by
\[  \eta: \omega_0^{\vee}(u) \cdot\omega_1^{\vee}(t_1)  \cdot\omega_2^{\vee}(t_2)  \cdot\omega_3^{\vee}(t_3)
\mapsto  \left( \begin{array}{ccc}
u t_1t_2t_3 & & \\
& u &  \\
& & u^{-2}t_1^{-1} t_2^{-1} t_3^{-1}  \end{array} \right). \] 
The kernel of $\eta$ is thus
\[  {\rm Ker}(\eta) = \{  \omega_1^{\vee}(t_1)  \cdot\omega_2^{\vee}(t_2)  \cdot\omega_3^{\vee}(t_3): t_1t_2t_3 =1 \}  = {}^{1-\theta}T \simeq  \mathbb{G}_m^2. \]
We are not sure if these explicit formulas are especially attractive, but perhaps they provide a certain level of psychological comfort and may certainly be useful for explicit computations.

\vskip 5pt
\subsection{\bf Fibers of unramified transfer}
The unramified spectral transfer defined by the twisted endoscopic character identity established in Proposition \ref{P:unram-spec-T}  realizes the functorial lifting  of unramified L-packets associated to $\xi$. Since unramified L-packets are parametrized by semsimple conjugacy classes in the dual group, this unramified spectral transfer is thus specififed by the induced map
\[  \xi :  T_H^{\vee}(\C) / W(T_H^{\vee}) \longrightarrow   T^{\vee}(\C) / W(T^{\vee}) \]
on Weyl-orbits on maximal (dual) tori.   
\vskip 5pt

The following proposition determines the nonempty fibers of the map $\xi$.
\vskip 5pt

\begin{prop} \label{P:fibers}
Suppose that $[s]$ and $[s']$ are two semisimple conjugacy classes of $\PGL_3(\C)$ such that $\xi(s) = \xi(s')$. Then one has
$[s'] = [s]$ or $[s'] = [s]^{-1}$. In particular, 
\[  \Theta_s \circ {\rm trans}_{\SL_3} = \Theta_{s'} \circ {\rm trans}_{\SL_3} \Longleftrightarrow  \text{ $[s'] = [s]$ or $[s'] = [s]^{-1}$.} \]
\end{prop}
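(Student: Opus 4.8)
The plan is to descend from $\Spin_8(\C)$ to $\GL_8(\C)$ along the standard representation, reducing the assertion to an elementary computation with multisets of eigenvalues.

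The ``if'' part is immediate. If $[s']=[s]$ there is nothing to prove, and if $[s']=[s]^{-1}$ then $\xi(s')=\xi(s)^{-1}$ (as $\xi\colon T_H^{\vee}\to T^{\vee}$ is a homomorphism); since $-1$ lies in $W(\PGSO_8,T^{\vee})=W(D_4)$ (the rank being even, so that $w_0=-1$), $\xi(s)^{-1}$ is $\Spin_8(\C)$-conjugate to $\xi(s)$, hence $\xi(s')=\xi(s)$. Then the unramified $L$-packets of $\PGSO_8$ attached to $\xi(s)$ and $\xi(s')$ coincide, the $\theta$-stable principal series $I(\chi_s\circ\eta)$ and $I(\chi_{s'}\circ\eta)$ are isomorphic together with their spherical-normalized $\theta$-extensions, so $\Theta^{\theta}_{\xi(s)}=\Theta^{\theta}_{\xi(s')}$, and $\Theta_s\circ{\rm trans}_{\SL_3}=\Theta_{s'}\circ{\rm trans}_{\SL_3}$ follows from Proposition~\ref{P:unram-spec-T}.

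For the ``only if'' part, suppose $\xi(s)$ and $\xi(s')$ are $\Spin_8(\C)$-conjugate. Composing with ${\rm std}\colon\Spin_8(\C)\to\SO_8(\C)\subset\GL_8(\C)$ and using that ${\rm std}\circ\xi={\rm Ad}$ (as noted just before Corollary~\ref{C:rep}), the matrices ${\rm Ad}(s)$ and ${\rm Ad}(s')$ are $\GL_8(\C)$-conjugate; equivalently, writing $\lambda_1,\lambda_2,\lambda_3$ for the eigenvalues of a representative of $[s]$ in $\GL_3(\C)$ (well defined up to a common scalar), the eigenvalue multiset
\[
   M(s)\;:=\;\{1,1\}\;\sqcup\;\{\lambda_i/\lambda_j:i\neq j\}
\]
satisfies $M(s)=M(s')$. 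It then suffices to show $M(s)=M(s')\Rightarrow [s']=[s]$ or $[s]^{-1}$, which I would prove by a short case analysis on the number of coincidences among $\lambda_1,\lambda_2,\lambda_3$, detected by the multiplicity of $1$ in $M(s)$: multiplicity $\geq 6$ forces $[s]=[s']=1$; multiplicity $4$ forces $M(s)=\{1,1,1,1,\mu,\mu^{-1},\mu,\mu^{-1}\}$ for some $\mu\neq 1$, which at once pins $[s']$ to $\{[s],[s]^{-1}\}$; and multiplicity $2$ is the generic case, where the other six entries of $M(s)$ form the inversion-stable multiset $\{x^{\pm1},y^{\pm1},(xy)^{\pm1}\}$ with $x=\lambda_1/\lambda_2$ and $y=\lambda_2/\lambda_3$. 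In that case one recovers the three inverse-pairs, then the ``balanced'' triple $\{x,y,(xy)^{-1}\}$ (characterised among the eight sign choices, up to the overall inversion exchanging it with $\{x^{-1},y^{-1},xy\}$, as the one of the form $\{u,v,(uv)^{-1}\}$), and finally removes its distinguished member to get $\{x,y\}$ up to simultaneous inversion. Since the swap $(x,y)\mapsto(y,x)$ equals $\iota\circ s_1 s_2 s_1$, where $\iota(x,y)=(x^{-1},y^{-1})$ and $s_1,s_2$ are the simple reflections of $W(A_2)$ acting on $(x,y)$, this datum corresponds to the $W(A_2)\times\langle\iota\rangle$-orbit of $(x,y)$, i.e.\ to the pair $\{[s],[s]^{-1}\}$.

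Finally, for the displayed equivalence: the forward implication is the ``if'' part, while conversely $\Theta_s\circ{\rm trans}_{\SL_3}=\Theta_{s'}\circ{\rm trans}_{\SL_3}$ gives $\Theta^{\theta}_{\xi(s)}=\Theta^{\theta}_{\xi(s')}$ by Proposition~\ref{P:unram-spec-T}, and restricting to the spherical Hecke algebra $\mathcal{H}_{\PGSO_8}$---on which $\Theta^{\theta}_{\xi(s)}$ is the character determined by $\xi(s)$ via Satake---forces $\xi(s)=\xi(s')$, reducing to the case just handled. The only step requiring genuine care is the generic case of the multiset computation, namely checking that when roots of unity make the decomposition of $M(s)$ into inverse-pairs non-unique the fibre still cannot grow beyond $\{[s],[s]^{-1}\}$; all the other steps are formal.
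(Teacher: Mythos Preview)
Your approach is essentially the paper's: both reduce via ${\rm std}\circ\xi={\rm Ad}$ to the eigenvalue multiset $M(s)$ in $\GL_8(\C)$ and then run a case analysis. The paper's organization is a little different and, crucially, \emph{complete} where yours is not. After noting (as you do, implicitly) that the non-unit part of $M(s)$ decomposes uniquely into three inverse-pairs, the paper uses the $S_3$ Weyl action on the three pairs $\{\pm\gamma\}$ of roots to align them, so that $\{\gamma(s),\gamma(s)^{-1}\}=\{\gamma(s'),\gamma(s')^{-1}\}$ for each positive root $\gamma$. Writing $\alpha(s')=\alpha(s)^{\epsilon_1}$ and $\beta(s')=\beta(s)^{\epsilon_2}$, the case $\epsilon_1=\epsilon_2$ gives $s'=s^{\epsilon_1}$ at once, while the case $\epsilon_1\ne\epsilon_2$ is disposed of by the extra constraint coming from the third pair $\{(\alpha+\beta)(s')^{\pm1}\}$: this forces $\alpha(s)^2=1$ or $\beta(s)^2=1$, which collapses back to the equal-sign case. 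That two-line argument is exactly what closes the gap you yourself flag (``when roots of unity make the decomposition \dots\ non-unique''): rather than reconstructing a canonical ``balanced triple'' and then worrying about when several sign choices satisfy $u\cdot v\cdot w=1$, one just observes that any sign discrepancy between $\alpha$ and $\beta$ is self-correcting, because the $(\alpha+\beta)$-constraint forces one of the two root values to be its own inverse.

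Your treatment of the ``if'' direction (via $-1\in W(D_4)$) and of the displayed biconditional (via Proposition~\ref{P:unram-spec-T} and restriction to $\mathcal{H}_{\PGSO_8}$) is correct and more explicit than the paper, which leaves these points implicit.
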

\vskip 5pt

\begin{proof}
 If $\xi(s) = \xi(s')$, then under the adjoint representation of $\PGL_3(\C)$, the elements ${\rm Ad}(s)$ and ${\rm Ad}(s')$ of $\SO_8(\C)$ have the same set of eigenvalues.
The eigenvalues of ${\rm Ad}(s)$  are precisely the value $\gamma(s)$ of the 6 roots $\gamma$ of $\PGL_3(\C)$  on $s$, together with the eigenvalue 1 (with multiplicity 2). Now if we group the 6 roots in 3 pairs (by grouping $\pm \gamma$ together), then the Weyl group $S_3$ of $\PGL_3(\C)$ acts on the 3 pairs  of roots by permutation. By applying an appropriate  Weyl element, we may thus assume that for any pair of roots $\pm \gamma$, 
\[  \{ \gamma(s), \gamma(s)^{-1} \} = \{ \gamma(s') , \gamma(s')^{-1} \}. \]
We need to show that this forces $[s'] = [s]$ or $[s^{-1}]$.
\vskip 5pt

Suppose that $\alpha$ and $\beta$ are the two simple roots, with $\alpha + \beta$ the highest root. We consider two cases:
\vskip 5pt
\begin{itemize}
\item[(a)]  If
\[ \alpha(s') = \alpha(s)^{\epsilon} \quad \text{and} \quad \beta(s') = \beta(s)^{\epsilon} \]
for a fixed $\epsilon = \pm 1$, then we conclude that $s' = s^{\epsilon}$, as desired. 
\vskip 5pt

\item[(b)]  If
\[ \alpha(s') = \alpha(s) \quad \text{and} \quad \beta(s') = \beta(s)^{-1}, \]
then we have
\[  (\alpha + \beta) (s') = \alpha(s) \cdot \beta(s)^{-1}. \]
But we also know that
\[ (\alpha+\beta)(s') = (\alpha + \beta)(s) \quad \text{or} \quad  (\alpha + \beta)(s')^{-1}. \]
This implies, for the two respective cases,  that 
\[  \beta(s) = \epsilon \quad \text{or} \quad  \alpha(s) = \epsilon \]
for some $\epsilon = \pm 1$.  But then, in the respective case,  we have
\[  \beta(s') = \beta(s)^{-1} = \beta(s) \quad \text{or} \quad \alpha(s') = \alpha(s) = \alpha(s)^{-1}. \]
In any case, we are back to the scenario (a). 
\end{itemize}
This proves the proposition.
\end{proof}
\vskip 5pt
We can provide another perspective on the above proposition. As we noted in Proposition \ref{P:lt}(iv), the map $\xi: \PGL_3(\C) \rightarrow \Spin_8(\C)$ gives rise to a homomorphism of Hecke algebras
\[ \xi^*:  \mathcal{H}_{\PGSO_8} \longrightarrow  \mathcal{H}_{\SL_3}. \]
Indeed, via the Satake isomorphism, we have
\[  \mathcal{H}_{\PGSO_8}  \simeq  {\rm R}[\Spin_8(\C)] \quad \text{and} \quad  \mathcal{H}_{\SL_3} \simeq {\rm R}[\PGL_3(\C)] \]
where ${\rm R}[G]$ denotes the representation ring of finite-dimensional algebraic representations of $G$. 
The map $\xi^*$ is then given by the restriction of representations via $\xi$. 
Now ${\rm R}[\Spin_8(\C)]$ is generated by the fundamental representations, namely its  three 8-dimensional representations $\{ \rho_1, \rho_2, \rho_3 \}$ and the adjoint representation $\mathfrak{so}_8$, whereas ${\rm R}[\PGL_3(\C)]$ is generated by the adjoint representation $\mathfrak{sl}_3$ as well as ${\rm Sym}^3({\rm std})$ and  
$ {\rm Sym}^3({\rm std}^{\vee})$, where ${\rm std}$ denotes the standard 3-dimensional representation of $\SL_3(\C)$.
Under restriction via $\xi$, one has
\[ 
  \rho_i \mapsto  \mathfrak{sl}_3 \quad \text{and} \quad \mathfrak{so}_8 \mapsto  \mathfrak{sl}_3 \oplus {\rm Sym}^3({\rm std}) \oplus {\rm Sym}^3({\rm std}^{\vee}). \]
 From this , we see that the restriction map $\xi^*$ is not surjective. Indeed, its image is contained in the subring  ${\rm R}[\PGL_3(\C)]^+$ of self-dual representations of $\PGL_3(\C)$.  This is the subring fixed by the nontrivial outer automorphism of $\PGL_3(\C)$. This explains why both $\chi \circ \xi^* = \chi^{-1} \circ\xi^*$ for any character $\chi: \mathcal{H}_{\SL_3} \rightarrow \C$. 
\vskip 10pt

\section{\bf Cuspidal Spectrum of $\SL_n$} \label{SS:SLn}
Since we are interested in showing the twisted endoscopic transfer from $\SL_3$ to $\PGSO_8$, it will be pertinent to recall some facts about the cuspidal automorphic spectrum of $\SL_3$. We shall consider the case of $\SL_n$ below, though we shall only be interested in the case $n=3$ in this paper. Our discussion in this section is informed by the papers of Labesse-Langlands \cite{LL}, Blasius \cite{B}, Lapid \cite{L1} and Hiraga-Saito \cite{HiS1, HiS2}.
\vskip 5pt

\subsection{\bf Local L-packets} \label{SS:Lpackets}
Let $F$ be a local field. The L-packets of $\SL_n(F)$ are defined as the set of irreducible constituents of the restriction of an irreducible representation $\pi$ of $\GL_n(F)$. 
This restriction is known to be semisimple and multiplicity-free \cite{GK} and we denote the corresponding L-packet of $\SL_n(F)$ by $\mathcal{L}[ \pi] \subset {\rm Irr}(\SL_n(F))$. 
 Moreover,   for irreducible representations $\pi$ and $\pi'$ of $\GL_n(F)$, one has 
 \[  \mathcal{L}[\pi'] \cap  \mathcal{L}[\pi]  \ne \emptyset \]
 if and only if 
\[    \text{$\pi' \simeq  \pi \otimes (\chi \circ \det)$ for some character $\chi$ of $F^{\times}$,} \]
in which case one has $\mathcal{L}[\pi'] = \mathcal{L}[\pi] \subset {\rm Irr}(\SL_n(F))$. We shall also write $\mathcal{L}[\pi]$ for the multiplicity-free semisimple representation given by the direct sum of the members of $\mathcal{L}[\pi]$.
 \vskip 5pt

\vskip 5pt

In particular,  any unramified L-packet of $\SL_n(F)$ is of the form $\mathcal{L}[\pi]$ for $\pi$ an unramified representation of $\GL_n(F)$.  If the Satake parameter of $\pi$ is the semisimple conjugacy class $c(\pi) \in \GL_n(\C)$, then that for $\mathcal{L}[\pi]$ is the image of $c(\pi)$ under the natural  projection map
\[  p: \GL_n(\C) \longrightarrow \PGL_n(\C) = \SL_n^{\vee}. \]
More generally,  if $\phi_{\pi}$ is the L-parameter of $\pi \in {\rm Irr}(\GL_n(F))$, then the L-parameter of $\mathcal{L}[\pi]$ under  the local Langlands correspondence for $\SL_n(F)$ is given by $p \circ \phi_{\pi}$. 
\vskip 5pt

\subsection{\bf Endoscopy}
The internal structure of a local  L-packet $\mathcal{L}[\pi]$ can be probed via the theory of endoscopy. For this purpose, let us explicate the endoscopic data for $\SL_3$, since this is the main case of interest for us. 
\vskip 5pt

\begin{lemma}
Up to isomorphism, the local endoscopic data for $\SL_3$  are given by pairs $(E/F, s)$ where
\begin{itemize}
\item $E/F$ is a cyclic (Galois) cubic field extension;
\item $s \in \PGL_3(\C) = \SL_3^{\vee}$ is one of the following two elements
\begin{equation} \label{E:s}
   \left( \begin{array}{ccc}
1 & & \\
& \zeta & \\
& & \zeta^2 \end{array} \right) \quad \text{or} \quad 
 \left( \begin{array}{ccc}
1 & & \\
& \zeta^2 & \\
& & \zeta \end{array} \right).  \end{equation}
\end{itemize}
For a given $(E/F, s)$, the associated endoscopic group is the norm-one torus
\[ T_E := {\rm Ker}( N_{E/k}: {\rm Res}_{E/k}(\mathbb{G}_m)\longrightarrow \mathbb{G}_m) \]
and the associated L-embedding
\[  \xi_{E/F,s}: {^L}T_E\simeq  T_E^{\vee} \rtimes {\rm Gal}(E/F)  \longrightarrow \PGL_3(\C) \]
 sends $T_E^{\vee}$ isomorphically to the diagonal torus of $\PGL_3(\C)$ and ${\rm Gal}(E/F)$ isomorphically to the cyclic group generated by the order 3 permutation matrices (in one of the two possible ways).  

 \end{lemma}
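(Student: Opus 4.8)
The plan is to unwind the definition of an elliptic endoscopic datum $(H,\mathcal{H},s,\xi)$ for the split group $\SL_3$ following \cite[\S 2.1]{KSh} and to carry out the resulting (small) classification by hand. Write $\widehat{G}=\SL_3^\vee=\PGL_3(\C)$, on which $W_F$ acts trivially, fix a primitive cube root of unity $\zeta$, and set $\widehat{H}^\circ:=\mathrm{Cent}_{\widehat{G}}(s)^\circ$. Since $\SL_3$ is semisimple we have $Z(\widehat{G})=1$, so the datum is determined by a semisimple class $[s]\subset\widehat{G}$, an $F$-group $H$ with an identification of $\widehat{H}$ with $\mathrm{Cent}_{\widehat{G}}(s)$ compatible with Galois actions, and the embedding $\xi$; ellipticity is the condition that the torus $(Z(\widehat{H}^\circ)^{W_F})^\circ$ be trivial. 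Because $\widehat{G}=\PGL_3$ is \emph{not} simply connected, the first task is to list the connected centralizers $\widehat{H}^\circ$ \emph{together with their component groups}.

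First I would classify semisimple $s$ up to conjugacy. Lifting $s$ to a semisimple $\tilde{s}\in\SL_3(\C)$, its class is the eigenvalue multiset $\{a,b,c\}$ of $\tilde{s}$ (with $abc=1$) taken up to the scaling action of $\mu_3=Z(\SL_3)$; an element centralizes $s$ exactly when a lift satisfies $\tilde{g}\tilde{s}\tilde{g}^{-1}\in\{\tilde{s},\zeta\tilde{s},\zeta^2\tilde{s}\}$, so $\mathrm{Cent}_{\widehat{G}}(s)$ is the image of a union of cosets of $\mathrm{Cent}_{\SL_3(\C)}(\tilde{s})$, one coset for each $j$ with $\zeta^j\{a,b,c\}=\{a,b,c\}$. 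Since multiplication by a primitive cube root of unity has no fixed point on $\C^\times$, the extra cosets ($j=1,2$) occur if and only if $\{a,b,c\}$ is a full $\mu_3$-orbit, i.e.\ (after rescaling) $\{a,b,c\}=\{1,\zeta,\zeta^2\}$. Hence there is a unique conjugacy class with disconnected centralizer, namely $s_0=[\mathrm{diag}(1,\zeta,\zeta^2)]$, with $\mathrm{Cent}_{\widehat{G}}(s_0)=\bar{T}\rtimes\Z/3$, where $\bar{T}\subset\PGL_3$ is a maximal torus and the $\Z/3$ is generated by the image of a $3$-cycle permutation matrix acting on $\bar{T}$ as a Coxeter element of $W\simeq S_3$. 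For every other $s$ the centralizer is connected: all of $\widehat{G}$ if $s=1$; a $\GL_2$-type group (the image of a $\GL_2\times\GL_1\subset\GL_3$) if $\tilde{s}$ has a repeated eigenvalue; or a maximal torus $\bar{T}$ for the remaining regular $s$.

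Next I would impose ellipticity. In any endoscopic datum an element $h_w$ ($w\in W_F$) realizing the Galois action on $\widehat{H}^\circ$ must centralize $s$, hence lie in $\mathrm{Cent}_{\widehat{G}}(s)$. For $s=1$ this gives the principal datum $H=\SL_3$. For a $\GL_2$-type $s$, $h_w$ acts by inner automorphisms, so the one-dimensional central torus of $\widehat{H}^\circ$ is $W_F$-fixed and the datum fails to be elliptic (it is a Levi, not an elliptic datum). For a regular non-special $s$, $h_w\in\bar{T}$ acts trivially, so $\bar{T}$ is $F$-split and the datum is again not elliptic. This leaves only $s$ conjugate to $s_0$ or to $s_0^{-1}$; here $\widehat{H}^\circ=\bar{T}$ and the $W_F$-action factors through the stabilizer of $s_0$ in $W$, which is the cyclic group $A_3\simeq\Z/3$ generated by the Coxeter element. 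As a Coxeter element of $A_2$ acts on the root lattice $X^*(\bar{T})$ with characteristic polynomial $t^2+t+1$, it has no nonzero fixed vector, so $(\bar{T}^{W_F})^\circ=1$ precisely when the homomorphism $W_F\to\Z/3$ giving the action is surjective. A continuous surjection $W_F\twoheadrightarrow\Z/3$ is the same datum as a cyclic cubic extension $E/F$ equipped with a choice of generator of $\Gal(E/F)$, and the two choices of generator correspond to the two possibilities for $s$ displayed in (\ref{E:s}).

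Finally I would identify $H$ and pin down $\xi$. Matching Galois lattices, $X_*(\widehat{H}^\circ)=X^*(\bar{T})$ is the $A_2$ root lattice with its Coxeter ($=\Gal(E/F)$) action, which is exactly the augmentation ideal of $\Z[\Gal(E/F)]$, i.e.\ $X^*(T_E)$ for the norm-one torus $T_E=\ker(N_{E/F})$; hence $H\simeq T_E$, a two-dimensional $F$-torus, anisotropic and split by $E$. For $\xi$ one takes the given isomorphism $\widehat{T_E}\simeq\bar{T}\hookrightarrow\PGL_3(\C)$ on the torus part, sending a chosen generator $\sigma$ of $\Gal(E/F)$ to (a fixed $3$-cycle permutation matrix representing the Coxeter element)$\,\times\,\sigma$; the two choices of $3$-cycle are mutually inverse and match the two values of $s$. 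It then remains to check that this $\xi$ satisfies the cocycle and centralizer-compatibility axioms and that distinct pairs $(E/F,s)$ give non-isomorphic data. The step I expect to be the main obstacle is the bookkeeping of the disconnectedness of centralizers in $\PGL_3$ --- confirming that $s_0,s_0^{-1}$ are the only source of the extra component group and that nothing else survives the ellipticity test --- together with pinning down the precise normalizing cocycle entering $\xi$ and verifying that the two values of $s$ in (\ref{E:s}) genuinely give inequivalent endoscopic data rather than two presentations of the same one.
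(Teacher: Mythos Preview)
The paper does not actually prove this lemma; it is stated as a standard fact (and indeed the statement itself trails off, with the promised description of $\xi_{E/F,s}$ never given). Your proposal therefore supplies what the paper omits, and the strategy you outline is the correct one: classify semisimple classes in $\PGL_3(\C)$, detect the unique class $[\mathrm{diag}(1,\zeta,\zeta^2)]$ whose centralizer acquires the extra $\Z/3$ component, impose ellipticity to force the $W_F$-action on $\bar{T}$ to be via a surjection onto $\Z/3$, and match the resulting Galois lattice with $X^*(T_E)$.

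One point of clarification worth making explicit in your write-up: the two matrices displayed in (\ref{E:s}) are conjugate in $\PGL_3(\C)$ (same eigenvalue multiset), so there is really a \emph{single} conjugacy class of $s$ in play. The genuine invariant distinguishing the two endoscopic data attached to a fixed $E/F$ is the choice of generator of $\Gal(E/F)$, equivalently the choice of which $3$-cycle in $S_3$ the image of Frobenius hits; the two displayed $s$ are best read as bookkeeping for this choice inside a fixed maximal torus. Your final paragraph suggests you are aware of this, but when you verify that the two data are non-isomorphic, the argument should be that any $g\in\PGL_3(\C)$ conjugating one $3$-cycle to its inverse lies in a transposition coset of $N(\bar{T})$, and such a $g$ does \emph{not} fix $s$ modulo $Z(\widehat{H})=\bar{T}$ in the required sense --- this is the step that prevents the two data from collapsing to one.
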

In particular, for each cyclic cubic extension $E/F$, the norm-one torus $T_E$ occurs as an endoscopic group two times; in other words, there are two transfer maps 
\[  {\rm trans}_{T_E, s}: \mathcal{I}(\SL_3) \longrightarrow \mathcal{SI}(T_E) = \mathcal{I}(T_E) \]
corresponding to the two choices for $s$.  
\vskip 5pt

\vskip 5pt

\subsection{\bf Endoscopic local L-packets} \label{SS:endoL}
For general $n$,  it has been shown by Henniart-Herb \cite{HH}, Hiraga-Saito \cite{HiS1, HiS2} and Hiraga-Ichino \cite{HI}  that 
the L-packets of $\SL_n(F)$, defined in the above manner, satisfies the expected endoscopic character identities, as dictated by the theory of endoscopy. 
Hence, the span of the Harish-Chandra characters of the members of $\mathcal{L}[\pi]$ 
contains a 1-dimensional subspace of stable distribution, given by  the  sum of  all the characters of all members of  $\mathcal{L}[\pi]$ and hence is simply given by the Harish-Chandra character $\Theta_{\pi}$ of $\pi$. We shall refer to $\Theta_{\pi}$ as 
the stable trace of the L-packet $\mathcal{L}[\pi]$ and denote it by $S\Theta_{[\pi]}$.
\vskip 5pt

Let us illustrate with the case of endoscopic local L-packets of $\SL_3$.
These endoscopic local L-packets are those whose L-parameter factors through some $\xi_{E/F,s}$.
They are precisely the $\mathcal{L}[\pi]$ such that $\pi \otimes \omega_{E/F} \simeq \pi$ for some cyclic cubic extension $E/F$ with associated cubic character $\omega_{E/F}^{\pm 1}$. In particular, the L-parameter of $\pi$ is of the form ${\rm Ind}_{W_E}^{W_F} \chi$ for some character $\chi$ of $E^{\times}$, in which case we will write $\pi =\pi_{\chi}$. 
\vskip 5pt

Let us assume that $\chi$ is not fixed by  ${\rm Gal}(E/F)$, so that $\pi_{\chi}$ is supercuspidal.  For a generic choice of $\chi$,    $\mathcal{L}[\pi_{\chi}]$ is the sum of 3 irreducible representations, say $\pi_1$, $\pi_2$ and $\pi_3$.
 Then the local endoscopic character identities expresses ${\rm trans}_{T_E,s}(\chi)$ (for the two choices of $s$) as a linear combination of the Harish-Chandra characters of the $\pi_i$'s. More precisely, one has
 \[ \begin{cases}
  {\rm trans}_{T_E,s}(\chi) = \Theta_{\pi_1} + \zeta \cdot \Theta_{\pi_2} + \zeta^2 \cdot  \Theta_{\pi_3};    \\
  {\rm trans}_{T_E,s^{-1}}(\chi) = \Theta_{\pi_1} + \zeta^2 \cdot \Theta_{\pi_2} + \zeta \cdot  \Theta_{\pi_3}.     
  \end{cases} \]
 These two invariant distributions, together with the stable trace $S\Theta_{[\pi_{\chi}]}$, give a basis for the 3-dimensional space of invariant distributions spanned by the characters of the $\pi_i$'s.
\vskip 5pt

 \subsection{\bf Cuspidal spectrum} \label{SS:cuspidal}
Now let $k$ be a number field. If $\pi \simeq \bigotimes'_v\pi_v\subset \mathcal{A}_{cusp}(\GL_n)$ is a cuspidal automorphic representation of $\GL_n$,  
the abstract restriction of $\pi$ to $\SL_n(\A)$ gives rise to a multiplicity-free semisimple representation of $\SL_n(\A)$, all of whose irreducible summands are nearly equivalent with each other. This is a global L-packet of $\SL_n$ and we shall denote this by $\mathcal{L}[\pi]$, noting that
\[  \mathcal{L}[\pi]  \simeq \bigotimes'_v \mathcal{L}[\pi_v]. \]
 On the other hand, one has the restriction of automorphic forms
\[ {\rm rest}: \mathcal{A}_{cusp}(\GL_n) \longrightarrow \mathcal{A}_{cusp}(\SL_n). \]
Then the image $L[\pi] := {\rm rest}(\pi)$ is a  submodule of both $\mathcal{A}_{cusp}(\SL_n)$ and $\mathcal{L}[\pi]$ as an $\SL_n(\A)$-module. 
\vskip 5pt

As the restriction map ${\rm rest}$ is surjective, one can hope to understand the cuspidal spectrum of $\SL_n$ in terms of that of $\GL_n$. For this purpose, following Lapid \cite{L1}, we consider the following three equivalence relations on cuspidal representations of $\GL_n$: 
\begin{definition}
Let $\pi$ and $\pi'$ be cuspidal representations of $\GL_n$.
\vskip 5pt

\noindent (a)  Write  $\pi {\thicksim}_s \pi' $ if and only if
\[
  \text{ $\pi' \simeq  \pi \otimes (\chi \circ \det)$ for some character $\chi$ of $F^{\times} \backslash \A^{\times}$,} \]
in which case one has
\[  \pi' =   \pi \otimes (\chi \circ \det) \subset \mathcal{A}_{cusp}(\GL_n). \]
We shall denote the equivalence class of $\pi$ under this equivalence relation by $[\pi]_s$.
\vskip 5pt

\noindent (b)  Write  $\pi {\thicksim}_{ew} \pi'$ if and only if
 \[  \text{$\pi' \simeq \pi \otimes (\chi \circ \det)$  for some character $\chi$ of $\A^{\times}$.} \]
 We shall denote the equivalence class of $\pi$ under this equivalence relation by $[\pi]_{ew}$.
\vskip 5pt

\noindent (c)  Write   $\pi {\thicksim}_{w} \pi'$ if and only if 
  \[  \text{$\pi'$ is nearly equivalent to $\pi \otimes (\chi \circ \det)$  for some character $\chi$ of $\A^{\times}$.} \]
  In other words, 
\[ \text{$\pi'_v \simeq \pi_v \otimes (\chi_v \circ \det)$  for some character $\chi_v$ outside a finite set $S$ of places of $k$.} \]
We shall denote the equivalence class of $\pi$ under this equivalence relation by $[\pi]_w$.
 \end{definition}
Clearly, the equivalence relation gets weaker as one goes down the list. When $n=2$, it is a result of Ramakrishnan \cite{Ra1} that the three equivalence relations above are in fact the same. 
For $n \geq 3$, Blasius \cite{B} has shown that they are distinct from each other.  The relevance of the three equivalence relations to the cuspidal spectrum of $\SL_n$ is addressed  by the following proposition \cite{L1, HiS1, HiS2}:
\vskip 5pt

\begin{prop}
Suppose that $\pi$ and $\pi'$ are cuspidal representations of $\GL_n$. Then we have:
\vskip 5pt
\noindent (i) $\pi {\thicksim}_s \pi' $ if and only if 
\[  0 \ne L[\pi'] \cap  L[ \pi]   \subset \mathcal{A}_{cusp}(\SL_n), \]
 in which case one has $L[\pi'] = L[\pi] $.    
 
 \vskip 5pt
 
 \noindent (ii)  $\pi {\thicksim}_{ew} \pi'$ if and only if
 \[  \text{$L[ \pi'] \simeq L[\pi] $ as abstract $\SL_n(\A)$-modules.} \]

 \vskip 5pt
 
 \noindent (iii) $\pi {\thicksim}_{w} \pi'$ if and only if 
 \[ \text{$L[\pi]$ and $L[\pi']$ have  nearly equivalent irreducible summands.} \]
   \end{prop}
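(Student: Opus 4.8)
The plan is to treat the three statements uniformly, translating each condition on the $\SL_n$--side into a statement about the collection of local packets $\{\mathcal{L}[\pi_v]\}_v$ and about the automorphic multiplicities with which the members of $\mathcal{L}[\pi]\simeq\bigotimes'_v\mathcal{L}[\pi_v]$ occur in $L[\pi]={\rm rest}(\pi)$, and then invoking (a) the purely local restriction dictionary of \S\ref{SS:Lpackets} and (b) the multiplicity theory for the cuspidal spectrum of $\SL_n$ of Labesse--Langlands \cite{LL}, Lapid \cite{L1} and Hiraga--Saito \cite{HiS1, HiS2}. I would first record two preliminary facts. First, $L[\pi]$ is a \emph{nonzero} $\SL_n(\A)$-submodule of $\mathcal{L}[\pi]$: it is nonzero because some right translate of any nonzero $\GL_n$-cusp form has nonvanishing restriction to $\SL_n(\A)$, and it is a submodule of $\mathcal{L}[\pi]$ since $\mathcal{L}[\pi]\simeq\bigotimes'_v\mathcal{L}[\pi_v]$; hence every irreducible summand $\sigma\simeq\bigotimes'_v\sigma_v$ of $L[\pi]$ has $\sigma_v\in\mathcal{L}[\pi_v]$ for all $v$. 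Second, the local equivalences from \S\ref{SS:Lpackets}: for $\pi_v,\pi'_v\in{\rm Irr}(\GL_n(k_v))$ one has $\mathcal{L}[\pi_v]\cap\mathcal{L}[\pi'_v]\ne\emptyset \iff \pi'_v\simeq\pi_v\otimes(\chi_v\circ\det)$ for some $\chi_v \iff \mathcal{L}[\pi_v]=\mathcal{L}[\pi'_v]$ (using \cite{GK}), and $\chi_v$ may be taken unramified wherever $\pi_v$ and $\pi'_v$ are both unramified, in particular at almost all places.

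I would dispatch part (iii) first, as it concerns only almost all places. All summands of $\mathcal{L}[\pi]$ are nearly equivalent to one another, so the near-equivalence class of the summands of $L[\pi]$ is exactly the datum $(\mathcal{L}[\pi_v])_v$ modulo a finite set of places; by the second preliminary fact this datum agrees with that of $\pi'$ if and only if $\pi'_v\simeq\pi_v\otimes(\chi_v\circ\det)$ for almost all $v$, which is the definition of $\pi {\thicksim}_w \pi'$.

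For part (ii): in the "only if" direction, $\pi'\simeq\pi\otimes(\chi\circ\det)$ with $\chi$ a character of $\A^\times$ gives $\mathcal{L}[\pi'_v]=\mathcal{L}[\pi_v]$ for every $v$, hence $\mathcal{L}[\pi']\simeq\mathcal{L}[\pi]$ as abstract $\SL_n(\A)$-modules; the substantive point is that the automorphic multiplicity of a member $\sigma\in\bigotimes'_v\mathcal{L}[\pi_v]$ in $L[\pi]$ depends only on $\{\mathcal{L}[\pi_v]\}_v$ and on the finite group of self-twisting automorphic characters of $\pi$, data unaffected by twisting by $\chi\circ\det$, so that $L[\pi']\simeq L[\pi]$ — this is where the multiplicity formula of \cite{LL, L1, HiS1, HiS2} enters. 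Conversely, an abstract isomorphism $L[\pi']\simeq L[\pi]$ exhibits a common irreducible summand $\sigma\simeq\bigotimes'_v\sigma_v$, so $\sigma_v\in\mathcal{L}[\pi_v]\cap\mathcal{L}[\pi'_v]$ and $\pi'_v\simeq\pi_v\otimes(\chi_v\circ\det)$ for all $v$; taking $\chi=\bigotimes'_v\chi_v$ (well defined by the second preliminary fact) yields $(\pi\otimes(\chi\circ\det))_v\simeq\pi'_v$ for all $v$, hence $\pi'\simeq\pi\otimes(\chi\circ\det)$ as abstract representations, i.e. $\pi {\thicksim}_{ew} \pi'$.

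Finally, part (i). The "only if" direction is immediate: if $\pi'\simeq\pi\otimes(\chi\circ\det)$ with $\chi$ an automorphic character, the isomorphism is realized on cusp forms by $f\mapsto f\cdot(\chi\circ\det)$, and since $\chi\circ\det$ is identically $1$ on $\SL_n(\A)$ this identifies ${\rm rest}(\pi)$ and ${\rm rest}(\pi')$ as subspaces of $\mathcal{A}_{cusp}(\SL_n)$, so $L[\pi]=L[\pi']$ and in particular $L[\pi]\cap L[\pi']\ne0$. The converse is the deepest part, and the step I expect to be the main obstacle: a common \emph{automorphic} summand $\sigma\subseteq L[\pi]\cap L[\pi']$ yields, by part (ii), an isomorphism $\pi'\simeq\pi\otimes(\chi\circ\det)$ with $\chi$ a character of $\A^\times$, and one must upgrade $\chi$ to an automorphic character; equivalently, one must show that when $\chi$ is genuinely non-automorphic (which by Blasius \cite{B} can occur with $\pi'$ still cuspidal) the submodules $L[\pi]$ and $L[\pi']$ of $\mathcal{A}_{cusp}(\SL_n)$ are \emph{disjoint}. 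This disjointness — the assertion that the automorphic realization of the global $\SL_n$-packet cannot detect non-automorphic determinant-twists — is the real content of the proposition; it is proved for $n=2$ in \cite{LL} and in general in \cite{L1} (see also \cite{HiS1, HiS2}) by analyzing which members of $\bigotimes'_v\mathcal{L}[\pi_v]$ occur automorphically and with what multiplicity, together with the obstruction to solving the twist globally. My proposal would therefore be to establish the two preliminary facts and parts (iii) and (ii) as above, and then reduce the converse of (i) to, and quote, these results of \cite{LL, L1, HiS1, HiS2}.
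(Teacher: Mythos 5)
The paper does not actually prove this proposition: it is stated as a result drawn from \cite{LL, L1, HiS1, HiS2}, so your proposal --- which supplies the routine local restriction dictionary of \S\ref{SS:Lpackets}, proves the easy directions and part (iii), and then defers the two genuinely global points (the invariance of the abstract module $L[\pi]$ under possibly non-automorphic twists in (ii), and the disjointness of $L[\pi]$ and $L[\pi']$ when the twist is not automorphic in (i)) to exactly those references --- is in substance the same route the paper takes. You correctly locate where the real content lies, and the elementary parts you do argue are sound.
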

 Observe that when $\pi$ and $\pi'$ satisfies (iii) but not (ii),  then $\mathcal{L}[\pi]$ and $\mathcal{L}[\pi']$ have no isomorphic irreducible summands, because their local components at some places  belong to different local L-packets, and hence are inequivalent. 
 In other words, the so-called strong multiplicity one property (or more appropriately, rigidity) does not hold for $\SL_n$, at the level of L-packets.
\vskip 5pt
As a consequence of the above proposition, one obtains the following decomposition of the cuspidal spectrum of $\SL_n$:
 \[
 \mathcal{A}_{cusp}(\SL_n) = \bigoplus_{ [\pi]_s}  L[\pi]    =   \bigoplus_{[\pi]_{ew}} \left(  \bigoplus_{[\pi']_s {\thicksim}_{ew} [\pi]_s}   L[\pi'] \right)  \simeq   \bigoplus_{[\pi]_{ew}} m_{\mathcal{L}[\pi]} L[\pi]. 
  \]
 Here, the first sum runs over the ${\thicksim}_s$-equivalence classes $[\pi]_s$ of cuspidal representations of $\GL_n$, whereas the subsequent sums run over the ${\thicksim}_{ew}$-equivalence classes $[\pi]_{ew}$. Moreover,  $m_{\mathcal{L}[\pi]}$ is called the {\it multiplicity} of the L-packet $\mathcal{L}[\pi]$ and is equal to  the number of ${\thicksim}_s$-equivalence classes in the ${\thicksim}_{ew}$-equivalence class of $\pi$. In particular, $m_{\mathcal{L}[\pi]}$ gives the multiplicity of the irreducible summands of $L[\pi]$ in the cuspidal spectrum of $\SL_n$. 
 \vskip 5pt
 
 We may  also write
 \[  \mathcal{A}_{cusp}(\SL_n)
   = \bigoplus_{[\pi]_w}\left(  \bigoplus_{ [\pi']_s {\thicksim}_w [\pi]_s}  L[\pi'] \right). \]
where the outer sum runs over the ${\thicksim}_w$-equivalence classes of cuspidal representations of $\GL_n$ and the inner sum runs over the ${\thicksim}_s$-equivalence classes in the ${\thicksim}_w$-equivalence class of $\pi$. This is a decomposition of the cuspidal spectrum of $\SL_n$ into near equivalence classes of representations of $\SL_n(\A)$. Following Lapid, we define the {\it global multiplicity} of $\mathcal{L}[\pi]$ by:
\begin{equation} \label{E:globalmult}
 \mathcal{M}_{\mathcal{L}[\pi]} = \sum_{ [\pi']_s {\thicksim}_w [\pi]_s}  m_{\mathcal{L}[\pi']}. \end{equation}

\vskip 10pt

\subsection{\bf Stable trace formula}
Let us now specialize to the case $n=3$.
One can enhance the above understanding of $\mathcal{A}_{cusp}(\SL_3)$ by considering the  stable trace formula of $\SL_3$. We give a very brief impressionistic sketch of this. 
\vskip 5pt

Analogous to the local case, the elliptic endoscopic data of $\SL_3$ are indexed by pairs $(E/k, s)$ where $E/k$ is a cyclic cubic extension and $s$ is one of the two elements of $\PGL_3(\C)$ given in (\ref{E:s}).
For a given pair $(E/k,s)$, the corresponding elliptic endoscopic group of $\SL_3$  is the anisotropic norm-one  torus
\[ T_E := {\rm Ker}( N_{E/k}: {\rm Res}_{E/k}(\mathbb{G}_m)\longrightarrow \mathbb{G}_m).\]
 Then the stable trace formula of $\SL_3$ takes the form
\[  
I^{\SL_3}_{disc} (f) =   S_{disc}^{\SL_3}(f)   + \frac{1}{9} \cdot \sum_{(E/k,s)}  I_{disc}^{T_E} ( f_{T_E,s}) \]
for test functions $f$ on $\SL_3(\A)$ with transfer $f_{T_E,s}$ on $T_E(\A)$ relative to $(E/k,s)$.  Here, $I_{disc}^{\SL_3}$ and $I_{disc}^{T_E}$ refers to the discrete part of the invariant trace formula for $\SL_3$ and $T_E$ respectively. On the other hand, $S_{disc}^{\SL_3}$ is a stable distribution on $\SL_3(\A)$. 
\vskip 5pt

Suppose that $c = \{ c_v \}_{v \notin S} \subset \PGL_3(\C)$ is a Hecke-Satake family, i.e. a collection of semisimple conjugacy classes in  $\PGL_3(\C)$ for all $v$ outside a finite set $S$ of places of $k$. Then $c$ determines a character 
\[   \chi_c:  \mathcal{H}_{\SL_3}^S:= \bigotimes_{v \notin S} \mathcal{H}_{\SL_3(k_v)} \longrightarrow \C \]
of the spherical Hecke algebras outside  $S$ and hence a near equivalence class of representations of $\SL_3(\A)$. 
Considering the $c$-isotypic part of the stable trace formula, i.e. where the Hecke algebra $\mathcal{H}_{\SL_3}^S$ acts by the character $\chi_c$, we obtain the more refined identity:
\[  I_{disc,c }^{\SL_3} (f_S)   = S_{disc,c}^{\SL_3}(f_S)  +  \frac{1}{9} \cdot \sum_{(E/k,s)}  I_{disc,c}^{T_E} ( {\rm trans}_{E/k,s}(f_S)),   \quad \text{for $f_S \in \bigotimes_{v \in S} C^{\infty}_c(\SL_3(k_v))$,} 
\]
where $I_{disc,c }^{\SL_3}$ and $S_{disc,c}^{\SL_3}$ denote the $c$-isotypic part of the relevant distributions, i.e. where the Hecke algebra $\mathcal{H}_{\SL_3}^S$ acts by the character $\chi_c$.

\vskip 5pt

\subsection{\bf Endoscopic L-packets}
A cuspidal global L-packet $\mathcal{L}[\pi]$ of $\SL_3$ is endoscopic if its Hecke-Satake family $c(\pi)$ satisfies
\[  I_{disc,c}^{T_E} \circ  {\rm trans}_{E/k,s} \ne 0 \quad \text{ for some $(E/k, s)$.} \]
Such an  L-packet is thus the endoscopic transfer of a nontrivial automorphic character of $T_E$. 
\vskip 5pt

To describe these endoscopic L-packets more explicitly, let us start with a nontrivial automorphic  character 
$\chi$ of $\A_E^{\times}$ and consider   the cuspidal representation $\pi_{\chi}$ of $\GL_3$ obtained by automorphic induction from $\chi$ (see \S \ref{SS:AI} below for a more detailed discussion of automorphic induction).  The cuspidal representations of $\GL_3$ which are obtained by automorphic induction from a cyclic cubic extension are characterized as those which are invariant under twisting by an automorphic (nontrivial) cubic character (corresponding to the cyclic cubic extension in question). 
Then the endoscopic L-packet of $\SL_3$ determined by $\chi|_{T_E}$ is simply $\mathcal{L}[\pi_{\chi}]$, so that its contribution to the cuspidal spectrum of $\SL_3$ is the submodule $L[\pi_{\chi}]$. 
\vskip 5pt

 Observe that the property of a global L-packet $\mathcal{L}[\pi]$ being endoscopic depends only on the ${\thicksim}_{w}$-equivalence class of the cuspidal representation $\pi$ of $\GL_3$. Indeed, if $\pi' {\thicksim}_w \pi$, 
 then there is an abstract character $\mu$ of $\A^{\times}$ such that $\pi'$ and $\pi \otimes \mu$ are nearly equivalent. Hence, if $\pi$ is invariant under twisting by a nontrivial cubic automorphic character $\nu$, then $\pi' \otimes \nu$ and $\pi'$ are nearly equivalent. 
 By the strong multiplicity or rigidity property, we must have $\pi' \otimes \nu \simeq \pi'$, so that $\pi'$ is obtained as an automorphic induction from the same cubic field extension as $\pi$.   
 \vskip 5pt
 
\subsection{\bf Non-endoscopic L-packets}  
On the other hand, suppose that  $\mathcal{L}[\pi]$ is {\it not} endoscopic, i.e. 
\[   I_{disc,c(\pi)}^{T_E} \circ  {\rm trans}_{E/k,s} \equiv 0 \quad \text{for any $(E/k,s)$.} \]
In this case, one has
\[   S_{disc,c(\pi)}^{\SL_3} = I_{disc,c(\pi) }^{\SL_3}  =  \bigoplus_{[\pi']_s {\thicksim}_w [\pi]_s} L[\pi']. \]
Since this distribution is stable, we deduce that 
\[  L[\pi'] \simeq \mathcal{L}[\pi'] \quad \text{for each $[\pi']_s {\thicksim}_w [\pi]_s$,} \]
 so that  each member of the global L-packet $\mathcal{L}[\pi']$ is cuspidal automoprhic.
 Hence we deduce that
\begin{equation} \label{E:SMF}
 S_{disc,c(\pi) }^{\SL_3}   = \bigoplus_{[\pi']_s  {\thicksim}_w [\pi]_s}  S\Theta_{[\pi']}, \end{equation}
where
 \[
 S\Theta_{[\pi']} =  \bigotimes_v  S\Theta_{[\pi'_v]}  =  \Theta_{\pi'}|_{\SL_3(\A)} \quad \text{ if $\pi' \simeq \otimes'_v \pi'_v$.} \]
 
\vskip 5pt

\subsection{\bf Stable multiplicity formula} By  the stable trace formula recounted above, the local endoscopic character identities mentioned in \S \ref{SS:endoL} and the description of the cuspidal spectrum of $\SL_3$ given in \S \ref{SS:cuspidal}, one can obtain 
the same formula (\ref{E:SMF}) for the endoscopic L-packets. As a consequence, one has
an explicit  spectral decomposition of the cuspidal part of $S_{disc}^{\SL_3}$, namely
\begin{equation} \label{E:SMF2}
  S_{cusp} ^{\SL_3}  = \bigoplus_{ [ \pi]_s}   S\Theta_{[\pi]},  \end{equation}
where the sum runs over the ${\thicksim}_s$-equivalence classes $[\pi]_s$ of cuspidal automorphic representations $\pi$ of $\GL_3$.
This is sometimes called the {\it stable multiplicity formula} for $\SL_3$.
\vskip 5pt

\vskip 10pt

\section{\bf Adjoint lifting of $\GL_3$} \label{S:adjoint-T}
In this section, we exploit the stable twisted trace formula for the twisted space  $(\PGSO_8,\theta)$ to show the adjoint lifting from $\GL_3$ to $\GL_8$.
Recall from the introduction that the adjoint representation ${\rm Ad}: \GL_3(\C)  \longrightarrow \GL_8(\C)$ factors as:
\[ \begin{CD}
 \GL_3(\C) @>>> \PGL_3(\C) @>\tilde{{\rm Ad}}>> \Spin_8(\C) @>>> \SO_8(\C)  @>{\rm std}>> \GL_8(\C), \end{CD} \]  
The functoriality relative to all other arrows being known, our purpose in this section is to establish the functorial lifting with respect to $\tilde{\rm Ad}$. 

\vskip 5pt

 \subsection{\bf Global K-forms of $\PGSO_8$} \label{SS:global-K}
As we noted in \S \ref{SS:local-K},   a slight complication with the theory of stable (twisted) trace formula is the need to take into account of K-forms \cite[\S I.1.11]{MW1}, which are certain inner forms of the twisted space in question. 
    In the context of $(\PGSO_8,\theta)$, we have seen there that the K-forms correspond to isomorphism classes of octonion algebras over local or number fields.
 Thus, over  a non-Archimedean local field  or $\C$, $(\PGSO_8, \theta)$ has no additional $K$-forms,  since there is a unique octonion algebra up to isomorphism. On the other hand,  over $\R$, there are two octonions algebras: the split one and a division algebra. As such, there are two K-forms of $(\PGSO_8, \theta)$ over $\R$. For a number field $k$,
with $r$ real places,  the number of such octonion algebras is $2^r$, so that there are $2^r$ global K-forms over $k$.   

\vskip 5pt

\subsection{\bf Stable twisted trace formula} \label{SS:STTF}
The stable twisted trace formula for $(\PGSO_8, \theta)$ takes the form  \cite[Thm. X.8.1, Pg 1241]{MW2}
\begin{equation} \label{E:stf}
\sum_{\mathbb{O}}  I^{\mathbb{O}, \theta}_{disc}(f_{\mathbb{O}})  =  S^{{\rm G}_2}_{disc}( f_{{\rm G}_2}) + \frac{1}{2} S_{disc}^{\SO_4}(f_{\SO_4})  +  S_{disc}^{\SL_3}(f_{\SL_3}), \end{equation}
where $\{ f_{\mathbb{O}} \in C^{\infty}_c(\PGSO^{\mathbb{O}}_8(\A))\}$ is a family of  test functions on the K-forms  and $f_{{\rm G}_2}$, $f_{\SO_4}$ and $f_{\SL_3}$ are its twisted endoscopic transfer to the twisted endoscopic groups ${\rm G}_2$, $\SO_4$ and $\SL_3$ respectively.   For each twisted endoscopic group $H$, the stable distribution $S^H_{disc}$ is  the discrete part of the stable trace formula for $H$. 

\vskip 5pt

Let us explicate  the distributions  $ I^{\mathbb{O}, \theta}_{disc}$, following \cite[\S 14.3]{LW} and \cite[\S X.5.1]{MW2}.
Writing $G = \PGSO^{\mathbb{O}}_8$, one has the expression  (see \cite[Thm. 14.3.1 and Prop. 14.3.2]{LW} and \cite[Pg. 125-128]{A}).
\[ 
  I^{\mathbb{O}, \theta}_{disc} (f) = \sum_M   
\frac{1}{|W(G,M)|}  I^{\mathbb{O}, \theta}_{disc,M} (f) \quad \text{ for  $f \in C^{\infty}_c(G(\A))$, } \]
where the sum runs over the conjugacy classes of Levi subgroups $M$ of $G$ and \cite[\S X.5.1]{MW2}
\[    I^{\mathbb{O}, \theta}_{disc,M}(f)  = 
 \sum_{s  \in W(G \cdot \theta, M)_{reg} }  |\det (1- s | \mathfrak{a}_M)|^{-1} \cdot  {\rm Tr} ( M_{P|s(P)}(0)\cdot  \rho_{P, s, disc}(f ) ). \] 
 Here:
 \vskip 5pt
 
 \begin{itemize}
 \item[-]  $P = M \cdot U$ is a standard parabolic subgroup with  Levi subgroup $M$ and unipotent radical $U$; 
 \vskip 5pt
 \item[-] $W(G \cdot \theta, M)$ is the Weyl set of elements of $G(F) \cdot \theta$ which normalizes $M$ (taken modulo $M(F)$), and $W(G \cdot \theta, M)_{reg}$ is the subset of regular elements, i.e. those $s$ such that $\det (1- s  | \mathfrak{a}_M) \ne 0$. 
 \vskip 5pt
 \item we regard elements $s \in G(F) \cdot \theta$ as outer automorphisms of $G$;
 \vskip 5pt
 
 \item[-]  $\rho_{P, s , disc}$ refers to the  representation of the twisted space $G(\A) \cdot \theta$ on 
 \[ \mathcal{A}_{disc} (   U(\A) M(k) Z(M)(k_{\infty})^0 \backslash G(\A)) \]
 with the action of  $\gamma \cdot \theta \in G(\A) \cdot \theta$  given  as the composite:
 \[ \begin{CD}
 \mathcal{A}_{disc} (   U(\A) M(k) Z(M)(k_{\infty})^0 \backslash G(\A))  \\
 @VVr_s(\gamma \cdot \theta)V \\
  \mathcal{A}_{disc} (   s (U(\A)) M(k) Z(M)(k_{\infty})^0 \backslash G(\A))  \\
  @VVM_{P, s(P)}(0)V \\
   \mathcal{A}_{disc} (   U(\A) M(k) Z(M)(k_{\infty})^0 \backslash G(\A)) \end{CD}  \]
 where
 \[  (r_s(\gamma \cdot \theta)\phi)(g) =  \phi ( s^{-1}(g \gamma)).\]
 and $M_{P,s \cdot P}$ is a standard intertwining operator.
 \vskip 5pt
 
 \item for $f \in C^{\infty}_c(G(\A))$, one has
 \[ \rho_{P,s, disc} (f) = \int_{G(\A)} f(\gamma) \cdot \rho_{P,s, disc}(\gamma \cdot \theta) \, d\gamma. \]
 \end{itemize}
 
  In particular, when  $M = G$, one has $s = \theta$ and
  \[ \rho_{G, \theta, disc}(\gamma \cdot \theta) \phi(g) = \phi (  \theta^{-1} (g \gamma)), \quad \text{ for $\phi \in \mathcal{A}_{disc}(G)$.} \]
  Thus,
 \[  I^{\mathbb{O}, \theta}_{disc,G} (f) = {\rm Tr}( f \cdot \theta | \mathcal{A}_{disc}(G)), \]
 where the action of $f \in C^{\infty}_c(G(\A))$  on  $\mathcal{A}_{disc}(G))$ is the usual one and that of
  $\theta$ is given by
 \[  \theta (\phi)(g) = \phi(\theta^{-1}(g)). \]
 On the other hand, for $M \ne G$, the representations of $G(\A)$  which intervene in $I^{\mathbb{O}, \theta}_{disc,M}$ are the $\theta$-stable representations  of the form
 \[ {\rm Ind}_{P(\A)}^{G(\A)}  \sigma, \]
 where $\sigma \subset \mathcal{A}_{disc, \chi}(M)$ is a $s$-stable submodule with unitary central character  $\chi$, for some $s \in W(G \cdot \theta, M)_{reg}$.
 \vskip 5pt

  \vskip 10pt

\vskip 5pt

\subsection{\bf Adjoint lifting.}
Using the stable twisted trace formula recalled above, we can now show the following theorem:
\vskip 5pt

\begin{thm} \label{T:main lifting}
Let  $\pi$ be a cuspidal representation of $\GL_3$ such that $\pi_{v_0}$ is a discrete series representation for some place $v_0$. Then the (weak) adjoint lifting of $\pi$ exists as an automorphic representation ${\rm Ad}(\pi)$ of  $\GL_8$. Moreover, if $\pi_v$ is unramified with Satake parameter $c(\pi_v) \in \GL_3(\C)$, then ${\rm Ad}(\pi)_v$ is unramified with Satake parameter ${\rm Ad}(c(\pi_v)) \in \GL_8(\C)$.
\end{thm}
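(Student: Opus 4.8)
The strategy is to produce, via the stable twisted trace formula (\ref{E:stf}), a discrete automorphic representation of the split $\PGSO_8(\A)$ whose Hecke--Satake family is $\xi_{\SL_3}\big(p(c(\pi))\big)$, where $p:\GL_3(\C)\to\PGL_3(\C)$ is the natural projection and $\xi_{\SL_3}=\tilde{{\rm Ad}}$ is the lift of the adjoint representation; pulling this representation back from $\PGSO_8$ to $\SO_8$ (the functoriality attached to the arrow $\Spin_8(\C)\to\SO_8(\C)$) produces a discrete automorphic representation of $\SO_8(\A)$ with Satake parameter ${\rm Ad}(c(\pi_v))$ at every unramified place, and the known functorial lift for the standard representation of $\SO_8$ \cite{A, CKPSS} then yields ${\rm Ad}(\pi)$ on $\GL_8$ as an isobaric (self-dual, orthogonal-type) automorphic representation with the asserted unramified Satake parameters. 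So the whole point is the existence of the $\PGSO_8$-representation.

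To produce it, set $c:=\xi_{\SL_3}\big(p(c(\pi))\big)$, a family of semisimple classes in $\Spin_8(\C)=\PGSO_8^{\vee}$, and work with the $c$-isotypic part of (\ref{E:stf}). I would choose the family $\{f_{\mathbb{O}}\}$ of test functions so that its twisted endoscopic transfer is the compatible family $(f_{G_2},f_{\SO_4},f_{\SL_3})$ whose components at the distinguished place $v_0$ are: $f_{\SL_3,v_0}$ a stable Euler--Poincar\'e (pseudo-coefficient) function for the discrete series L-packet $\mathcal{L}[\pi_{v_0}]$ of $\SL_3(k_{v_0})$, and $f_{G_2,v_0}=f_{\SO_4,v_0}=0$. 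This is a genuine compatible family: an Euler--Poincar\'e function is cuspidal, so the restrictions of all three components to the Levi subgroups $\GL_{2,l},\GL_{2,s}$ vanish at $v_0$, and by Proposition \ref{P:lt}(ii) every compatible family is the transfer of some $\{f_{\mathbb{O}}\}$. Because the $G_2$- and $\SO_4$-components vanish at $v_0$, the global functions $f_{G_2}$ and $f_{\SO_4}$ are zero, so the right-hand side of (\ref{E:stf}) collapses to $S^{\SL_3}_{disc,c}(f_{\SL_3})$. Dually, by Proposition \ref{P:lt}(i) the matching $f_{\mathbb{O},v_0}$ lies in the cuspidal subspace $\mathcal{I}^{\theta}_{cusp}(\PGSO_8^{\mathbb{O}})$, i.e. is $\theta$-cuspidal at $v_0$; by the twisted analogue of Arthur's simple trace formula this annihilates every contribution of a proper twisted Levi subspace to $I^{\mathbb{O},\theta}_{disc}$, leaving $I^{\mathbb{O},\theta}_{disc}(f_{\mathbb{O}})={\rm Tr}\big(f_{\mathbb{O}}\cdot\theta\mid\mathcal{A}_{disc}(\PGSO_8^{\mathbb{O}})\big)$ (and only the $c$-isotypic part of this matters).

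It therefore remains to choose the components of $f_{\SL_3}$ away from $v_0$ (and the companion components of the compatible family) so that $S^{\SL_3}_{disc,c}(f_{\SL_3})\neq0$. Here I invoke the stable multiplicity formula for $\SL_3$ of \S\ref{SS:SLn}: since $\pi$ is cuspidal, $c$ is not the Satake family of the residual spectrum of $\SL_3$, so $S^{\SL_3}_{disc,c}$ equals $\bigoplus S\Theta_{[\pi']}$, the sum running over the finitely many ${\thicksim}_s$-classes of cuspidal representations $\pi'$ of $\GL_3$ with $\pi'{\thicksim}_w\pi$ or $\pi'{\thicksim}_w\pi^{\vee}$ (Proposition \ref{P:fibers} identifies the families of $\pi$ and $\pi^{\vee}$ under $\xi_{\SL_3}$). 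The summand $S\Theta_{[\pi]}$ occurs; when paired with $f_{\SL_3}$, the $v_0$-factor of $S\Theta_{[\pi']}(f_{\SL_3})=\prod_v{\rm Tr}\big(\mathcal{L}[\pi'_v](f_{\SL_3,v})\big)$ vanishes unless $\mathcal{L}[\pi'_{v_0}]=\mathcal{L}[\pi_{v_0}]$ (a pseudo-coefficient of a discrete series packet has vanishing trace on every other packet), and for the surviving terms it equals $|\mathcal{L}[\pi_{v_0}]|>0$; choosing the remaining local components of $f_{\SL_3}$ using linear independence of Harish-Chandra characters of distinct local L-packets, one makes $S\Theta_{[\pi]}(f_{\SL_3})\neq0$ while killing the other (finitely many) summands. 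Thus $S^{\SL_3}_{disc,c}(f_{\SL_3})\neq0$, hence $\sum_{\mathbb{O}}{\rm Tr}\big(f_{\mathbb{O}}\cdot\theta\mid\mathcal{A}_{disc}(\PGSO_8^{\mathbb{O}})\big)\neq0$, so some octonion algebra $\mathbb{O}$ carries a $\theta$-stable discrete automorphic representation of $\PGSO_8^{\mathbb{O}}(\A)$ with Hecke--Satake family $c$. The archimedean K-forms are split at all finite places, hence irrelevant to the Satake family, and one may arrange (by an appropriate choice of archimedean test functions, or by invoking functoriality for the pertinent inner form of $\SO_8$) to take $\mathbb{O}$ split; the unramified matching ${\rm Ad}(\pi)_v\leftrightarrow{\rm Ad}(c(\pi_v))$ is then precisely the unramified spectral transfer of Proposition \ref{P:unram-spec-T}, followed by the $\SO_8\to\GL_8$ lift.

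The step I expect to be the main obstacle is the nonvanishing: ensuring that the contribution of $S\Theta_{[\pi]}$ genuinely survives across the stable twisted trace formula requires on the one hand the fine structure of the cuspidal spectrum of $\SL_3$ (the stable multiplicity formula of \S\ref{SS:SLn}, which itself rests on endoscopy for $\SL_3$ in the style of Labesse--Langlands), and on the other hand the simple-trace-formula reduction on the $\PGSO_8$-side, which is exactly where the hypothesis that $\pi_{v_0}$ be a discrete series is used (there being, in general, no supercuspidal place available). A secondary and more routine issue is the bookkeeping of K-forms at the real places of $k$.
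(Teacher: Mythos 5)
Your overall strategy is the paper's: the same choice of test data (a pseudo-coefficient of $\pi_{v_0}$, matched via Proposition \ref{P:lt}(i) to a $\theta$-cuspidal function with vanishing $G_2$- and $\SO_4$-components), the same collapse of (\ref{E:stf}) to the $\SL_3$-term, the same use of the stable multiplicity formula and of Proposition \ref{P:fibers} to describe the classes contributing to $S^{\SL_3}_{disc,c}$, and the same descent $\PGSO_8 \to \SO_8 \to \GL_8$ at the end. But the step you yourself flag as the main obstacle — the nonvanishing of $S^{\SL_3}_{disc,c}(f_{\SL_3})$ — is carried out by an argument that does not work as stated. You propose to choose the components of $f_{\SL_3}$ at the places in $S$ so as to make $S\Theta_{[\pi]}(f_{\SL_3})\ne 0$ \emph{while killing the other summands}, ``using linear independence of Harish-Chandra characters of distinct local L-packets.'' The function $f_{\SL_3}$ is not a free test function on $\SL_3$: it must lie in the image of ${\rm trans}_{\SL_3}$, and by Corollary \ref{C:image} and Proposition \ref{P:image}(ii) that image is constrained to be invariant under the outer automorphism on the non-cuspidal graded pieces (equivalently, compare Proposition \ref{P:fibers}: the transfer cannot distinguish $[s]$ from $[s]^{-1}$). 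Consequently a transferred function can never separate an L-packet from its contragredient, and in particular the class of $\pi^{\vee}$ — which lies in $\Xi_{S,c,\lambda}$ whenever it differs from that of $\pi$ — cannot be killed. So the isolation you invoke is simply unavailable.

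The fix is exactly what the paper does: do not isolate at all. The sum $\sum_{[\pi']_s\in\Xi_{S,c,\lambda}}{\rm Tr}\bigl(\pi'_S(\cdot)\bigr)$ is the character of a nonzero semisimple admissible \emph{stable} representation of $\prod_{v\in S}\SL_3(k_v)$, and Lemma \ref{L:nonzero} (in its semi-local form) shows that the pullback under ${\rm trans}_{\SL_3,S}$ of such a character is a nonzero functional on $\bigotimes_{v\in S}C^\infty_c(\PGSO_8^{\mathbb{O}_v}(k_v))$: the point is that $\pi^{\sigma}\simeq\pi^{\vee}$, so $\Theta_{\pi}(f+f^{\sigma})=\Theta_{\pi\oplus\pi^{\vee}}(f)$, which is nonzero for some $f$ in the ${\rm Out}(\SL_3)$-invariant subspace that the transfer does reach. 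Without this lemma (or some substitute positivity/no-cancellation argument) your chain of implications has a genuine hole at its crucial link. A secondary, smaller inaccuracy: you assert one may ``arrange to take $\mathbb{O}$ split'' by an appropriate choice of archimedean test functions; there is no control over which K-form picks up the spectral contribution, and the paper instead allows $\Pi_{\mathbb{O}}$ on a possibly non-split form and appeals to \cite{CZ} or \cite{CFK} to transfer $f_1^*(\Pi_{\mathbb{O}})$ from the corresponding inner form of $\SO_8$ to $\GL_8$ — which is essentially your parenthetical alternative, and is the route that should be taken.
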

\vskip 5pt

\begin{proof}

   To use the stable twisted trace formula  (\ref{E:stf}), we need to specify the test function $f_{\mathbb{O}} = \prod_v f_{\mathbb{O},v}$ used there, for each octonion $k$-algebra $\mathbb{O}$. We do this as follows:
   
   \vskip 5pt
\begin{itemize}
\item  At a real place $v$, we let the test functions $f_{\mathbb{O},v}$ be arbitrary (so that they may depend on $\mathbb{O}$). 
\vskip 5pt

\item If $v$ is a non-Archimedean distinct from $v_0$ or a complex  place of $k$, we will choose $f_{\mathbb{O},v} = f_v$ to be independent of $\mathbb{O}$ but otherwise arbitrary.
\vskip 5pt

\item  At the place $v_0$,  we take a a pseudo-coefficient $f_{\pi_{v_0}}$ of $\pi_{v_0}$ and consider its stable orbital  integral  which is a nonzero element of 
$\mathcal{SI}_{cusp}(\SL_3)$. By Proposition \ref{P:lt}(i), one then pick $f_{v_0}  \in  \mathcal{I}^{\theta}_{cusp}(\PGSO_8)$ such that 
\begin{equation}  \label{E:transfer}
 {\rm trans}([f_{v_0}])  = (0 ,0, [f_{\pi_{v_0}}]). \end{equation}
We then take $f_{\mathbb{O}, v_0} = f_{v_0}$ for any $\mathbb{O}$.
 \end{itemize}
\vskip 5pt

With such a family of test functions  \{$f_{\mathbb{O}} \}$, (\ref{E:stf}) simplifies to:
\begin{equation} \label{E:STF1}\
 \sum_{\mathbb{O}} I^{\mathbb{O}, \theta}_{disc}(f_{\mathbb{O}})  =   S_{disc}^{\SL_3}(f_{\SL_3}), \end{equation}
where
\[  f_{\SL_3} = \sum_{\mathbb{O}} {\rm trans}^{\mathbb{O}}_{\SL_3} ( f_{\mathbb{O}}) = \left(   \prod_{\text{real $v$}} \left( \sum_{\mathbb{O}_v}  {\rm trans}^{\mathbb{O}_v}_{\SL_3}(f_{\mathbb{O},v}) \right) \right)  \cdot \prod_{\text{other $v$}}  {\rm trans}_{\SL_3} ( f_v),
 \]
with the sum over $\mathbb{O}_v$ running over the two octonion algebras at a real place $v$.
\vskip 5pt

Now we may use the action of the spherical Hecke algebras of $G = \PGSO_8$  at almost all places of $k$ to isolate a given Hecke-Satake family on the LHS of (\ref{E:STF1}). 
More precisely, with our given  cuspidal representation $\pi$ of $\GL_3$ with associated submodule $L[\pi] \subset \mathcal{A}_{cusp}(\SL_3)$, let $S$ be the finite set of places (containing all Archimedean places and the place $v_0$) such that $\pi_v$ is unramified if and only if  $v \notin S$. Considering  the Hecke-Satake family 
\[ c ( [\pi]) = \{ c([\pi_v]) : v \notin S \} \subset \PGL_3(\C),\]
 let us set 
\[    c := \{   \tilde{\rm Ad} ( c([\pi_v])): v \notin S \}  \subset \Spin_8(\C) \]
where $\tilde{\rm Ad}:  \PGL_3(\C) \hookrightarrow \Spin_8(\C)$ is the adjoint map. 
At each Archimedean place $v$, we may also consider the infinitesimal character $\lambda([\pi_v])$ of $\mathcal{L}[\pi_v]$, which may be regarded as a semisimple conjugacy class in the complexified dual Lie algebra $\mathfrak{sl}_3$,  giving rise to  
\[  \lambda([\pi]) = ( \lambda([\pi_v]))_{v \in S_{\infty}} \in \mathfrak{sl}_3(k \otimes_{\Q} \C), \]
where $S_{\infty}$ denotes the set of Archimedean places of $k$. 
 We set
\[  \lambda := ( \tilde{\rm Ad}_*(\lambda([\pi_v])) )_{v \in S_{\infty}} \in  \mathfrak{so}_8( k \otimes_{\Q} \C), \]
By \cite[\S X.5.9]{MW2}, one has;
\begin{equation}  \label{E:MW5.9}
    \sum_{\mathbb{O}}  I_{disc, S, c, \lambda}^{\mathbb{O}, \theta} (f_{\mathbb{O} ,S})    = \sum_{ [\pi']_s}  S^{\SL_3}_{disc, [\pi']_s} (f_{\SL_3,S})  \end{equation}
where 
\[ f_{\mathbb{O},S}  \in  \left( \bigotimes_{v \in S} C^{\infty}_c(\PGSO_8^{\mathbb{O}_v}) \right) \otimes \left( \bigotimes_{v \notin S} 1_{K_v} \right) \]
with $1_{K_v}$ the characteristic function of $K_v= \PGSO_8(\mathcal{O}_v)$,  and  
the sum on the RHS ranges over all $[\pi']_s$ whose restriction  to $\SL_3$ is unramified outside $S$, and such that  
\[ \tilde{\rm Ad} ( c([\pi']))  =  c \quad \text{and} \quad  \tilde{\rm Ad}_*(\lambda[\pi']) = \lambda.
\]
We shall denote the set of such $[\pi']_s$ by $\Xi_{S, c, \lambda}$.
\vskip 5pt

Now for each  $[\pi'] \in \Xi_{S, c, \lambda}$, it follows by Proposition \ref{P:fibers} that, for each $v \notin S$,  the local L-packets
 $\mathcal{L}[\pi'_v]$ and $\mathcal{L}[\pi_v]$ are either equal or contragredient of each other; in particular, $\mathcal{L}[\pi'_v]$ is a generic L-packet for each $v \notin S$.  Moreover, since 
 $f_{v_0}$ lies in $\mathcal{I}^{\theta}_{cusp}(\PGSO_8)$, any $[\pi']_s \in \Xi_{S, c, \lambda}$  is necessarily contained in the cuspidal spectrum $\mathcal{A}_{cusp}(\SL_3)$. 
 \vskip 5pt
 
 By the stable multiplicity formula  (\ref{E:SMF2})) for $\SL_3$,  (\ref{E:MW5.9}) becomes:
\[
 \sum_{\mathbb{O}}  I_{disc,S, c, \lambda}^{\mathbb{O}, \theta} (f_{\mathbb{O} ,S})  = 
 \sum_{[\pi']_s \in \Xi_{S, c, \lambda}}  {\rm Tr} \left( \pi'_S(f_{\SL_3,S}) \right). \]
 Now the RHS is the pullback by ${\rm trans}_{\SL_3,S}$ of the  character distribution  of a stable admissible representation of  $\prod_{v \in S}\SL_3(k_v)$.  
 By Lemma \ref{L:nonzero} (or rather the semi-local version of it), one deduces that the  
  RHS of the above identity defines a nonzero linear functional on $\otimes_{v \in S} C^{\infty}_c(\PGSO_8^{\mathbb{O}_v}(k_v))$. 
This implies that the LHS is a nonzero linear functional and in particular that 
\[ \sum_{\mathbb{O}}  I_{disc, S, c, \lambda}^{\mathbb{O}, \theta}  \ne 0. \]
This shows that the pair  $(c, \lambda)$   can be detected in the spectrum of the $I_{disc}^{\mathbb{O},\theta}$ for some $\mathbb{O}$ and hence can be realized by an automorphic subrepresentation $\Pi_{\mathbb{O}}$ of $\PGSO_8^{\mathbb{O}}$ for some $\mathbb{O}$. Moreover, such a $\Pi_{\mathbb{O}}$  is unramified outside of $S$.
\vskip 5pt

Pulling $\Pi_{\mathbb{O}}$ back to $\SO_8$ via $f_1: \SO_8^{\mathbb{O}} \rightarrow \PGSO_8^{\mathbb{O}}$ and
  transfering of $f_1^*(\Pi_{\mathbb{O}})$ to $\GL_8$ (appealing to, for example, \cite{CZ} or \cite{CFK} for the case when $\mathbb{O}$ is not split), one obtains
the desired weak adjoint lifting ${\rm Ad}(\pi)$ of $\pi$. More precisely,  by its construction, ${\rm Ad}(\pi)$ is unramified outside of $S$ with Satake oparameter ${\rm Ad}(c(\pi_v))$ for all $v \notin S$, and with infinitesimal character ${\rm Ad}_*(\lambda[\pi_{\infty}]) \in \mathfrak{gl}_8(k \otimes_{\Q} \C)$.
\end{proof}
\vskip 5pt

\subsection{\bf General case} \label{SS:general}
In fact, it is not  too hard to remove the hypothesis that some local component of $\pi$ is discrete series in Theorem \ref{T:main lifting}, using arguments in \cite[\S 3.5, esp. Prop. 3.5.1]{A}. 
 Let us briefly indicate how the argument goes. 
 \vskip 5pt
 
 As the reader will notice, the main reason for imposing the local condition in Theorem \ref{T:main lifting} is to ensure that only one twisted elliptic endoscopic group (namely $\SL_3$) survives on one side of the stable twisted trace formula of $(\PGSO_8, \theta)$. This ensures that when localizing at a Hecke-Satake family $c$, there is a nonzero contribution since there is no possibility of cancellation of the $c$-contribution from $\SL_3$ with the $c$-contribution from the other twisted endoscopic groups. 
In the general case,  there is a priori a  possibility of cancellation. It is partly due to the nature of the terms in the trace formula (which could introduce negative coefficients) and partly due to the fact that the local transfer of test functions is not always surjective (as we noted in Cor. \ref{C:image}). 
\vskip 5pt

 On the other hand,  \cite[Prop. 3.5.1]{A}  is designed to ensure that such cancellation can be avoided, as long as one could ensure that the $c$-contributions from each of the three twisted endoscopic groups are  (locally finite) non-negative linear combinations of character distributions  of automorphic representations.  To put oneself in a position to apply \cite[Prop. 3.5.1]{A}, we first note:
 \vskip 5pt
 
 \begin{prop} \label{P:shin-takanashi}
 Let $\sigma$ be an  irreducible automorphic representation of any elliptic twisted endoscopic group $H$ of $(\PGSO_8, \theta)$ which satisfies one of the following:
 \vskip 5pt
 \begin{itemize}
 \item $\sigma$ is a subquotient of ${\rm Ind}_P^H \tau$, where $P$ is a parabolic subgroup of $H$ and $\tau$ is an automorphic representation of the Levi factor of $P$;
 \vskip 5pt
 
 \item $\sigma$ is obtained as an endoscopic lift (from an elliptic endoscopic group of $H$).
 \end{itemize}
 Then the (weak) twisted endoscopic lifting of $\sigma$ to $\PGSO_8$ exists (as an automorphic representation). 
  \end{prop}
The proof of this proposition is not difficult but will be deferred to a joint work with Shin and Takanashi, since it involves some  case-by-case considerations and a discussion of endoscopic lifting for $G_2$, for example, which will take us too far afield.
\vskip 5pt

Let us assume the above proposition. Suppose now that $\pi$ is a cuspidal representation of $\GL_3$. 
For the purpose of showing the desired adjoint lifting,  we may assume (by the proposition) that 
$L[\pi]$ is not endoscopic, so that $\pi$ is not an automorphic induction from a cyclic cubic extension. Indeed, independent of the proposition, one knows that if $\pi$ is an automorphic induction from a cyclic cubic extension, then ${\rm Ad}(\pi)$ exists (as we explain in \S \ref{S:AI}). As in the proof of Theorem \ref{T:main lifting}, let us set
\[    c := \{   \tilde{\rm Ad} ( c([\pi_v])): v \notin S \}  \subset \Spin_8(\C) \]
and consider the $c$-isotypic part of the stable twisted trace formula for $(\PGSO_8,\theta)$:
\[   \sum_{\mathbb{O}} I^{\mathbb{O}, \theta}_{disc,c} (f_{\mathbb{O}}) = \sum_H  S^H_{dics,c}(f_H). \]
To establish the desired adjoint lifting, it suffices to show that the distribution $\sum_{\mathbb{O}} I^{\mathbb{O}, \theta}_{disc,c}$ is nonzero, and for this, we shall examine the RHS.
\vskip 5pt

Let us examine the stable distribution $S^H_{disc}$  for each of the elliptic twisted endoscopic groups $H$. One can write:
\[  S_{disc}^H   = {\rm Tr}( - | \mathcal{A}_{disc}[H]) + \mathcal{E}_H \]
where $ \mathcal{E}_H$ can be explicated (as we did for the twisted trace formula of $(\PGSO_8,\theta)$ in \S \ref{SS:STTF}). 
 Considering  the $c$-isotypic part, one has: 
 \[  S_{disc,c}^H   = {\rm Tr}( - | \mathcal{A}_{disc,c}(H)) + \mathcal{E}_{H,c}. \]
The main fact we need to know about $\mathcal{E}_H$ is that its spectral support consists entirely of automorphic representations of the type highlighted in Proposition \ref{P:shin-takanashi}. 
 \vskip 10pt
 
 We now examine two cases:
 \vskip 5pt
 
 \begin{itemize}
 \item Suppose that $\mathcal{E}_{H,c}$ is nonzero for some $H$. In this case, there is an automorphic representation $\sigma$ in the spectral support of $\mathcal{E}_H$ such that
 \[  c = \{   \tilde{\rm Ad} ( c([\sigma_v])): v \notin S \}  \subset \Spin_8(\C). \]
 Such a $\sigma$ is of type considered in Proposition \ref{P:shin-takanashi} and hence has an automorphic lift $\Pi =\tilde{\rm Ad}_*(\sigma)$ on   $\PGSO_8$ (by the proposition). The transfer of $\Pi$ to  $\GL_8$ is then the desired adjoint lifting of $\pi$.
 \vskip 5pt
 
 \item Suppose that $\mathcal{E}_{H,c}$ is zero for all $H$. Then one has
 \[   \sum_{\mathbb{O}} I^{\mathbb{O}, \theta}_{disc,c} (f_{\mathbb{O}}) = \sum_H    {\rm Tr}( f_H | \mathcal{A}_{disc,c}(H)). \]
Now the RHS  is a (locally finite) nonnegative linear combination of  character distributions of (square-integrable) automorphic representations.
 On the other hand,  the contribution of the term $H = \SL_3$ is nonzero (as we argued in the proof of Theorem \ref{T:main lifting}). 
 Hence, \cite[Prop. 3.5.1]{A} implies  that the RHS is nonzero (i.e. no cancellation occurs), and hence $ \sum_{\mathbb{O}} I^{\mathbb{O}, \theta}_{disc,c}$ is nonzero, as desired. 
 \end{itemize}
\vskip 5pt
Thus, taking for granted Proposition \ref{P:shin-takanashi}, we have shown the adjoint lifting for $\GL_3$  without the local hypothesis in Theorem \ref{T:main lifting}.

\section{\bf Automorphic Induction Case}. \label{S:AI}
The next few sections of the paper are devoted to the analysis of ${\rm Ad}(\pi)$ and in particular to the proof of Theorem \ref{T:intro3}.
In this section, we examine the case when the cuspidal representation $\pi$ is obtained by automorphic induction from a Hecke character of a cubic field extension $E/k$. In particular, we shall see that the adjoint lifting ${\rm Ad}(\pi)$ always exists (without the local condition in Theorem \ref{T:main lifting}. Moreover, we shall be able to determine the possible isobaric decomposition of ${\rm Ad}(\pi)$.
\vskip 5pt

\subsection{\bf Cubic fields} Let $E/k$ be a cubic field extension, so that $E/k$ gives rise to a conjugacy class of homomorphsms $\alpha_E: {\rm Gal}(\overline{k}/k) \longrightarrow S_3$. 
Composing $\alpha_E$ with the sign character of $S_3$ gives a quadratic character, which is trivial if and only if $E/k$ is Galois (or normal). The \'etale quadratic 
algebra determined by $({\rm sign}) \circ \alpha_E$ is the discriminant quadratic algebra $K_E$ of $E/k$.  Composing $\alpha_E$ with the two-dimensional irreducible representation of $S_3$, one obtains a two-dimensional representation $\rho_E$  of ${\rm Gal}(\overline{k}/k)$ which is irreducible if and only if $E/k$ is non-Galois.  We note that
\[  {\rm Ind}_{W_E}^{W_k} 1 = 1 \oplus \rho_E. \]
\vskip 5pt

\subsection{\bf Automorphic induction} \label{SS:AI}
 Given a Hecke character $\chi: E{^\times} \backslash \A_E^{\times} \longrightarrow \C^{\times}$, one obtains a $1$-dimensional representation (still denoted by $\chi$) 
\[ \chi: W_E \longrightarrow W_E^{ab} \simeq E^{\times} \backslash \A_E^{\times}  \longrightarrow \C^{\times} \]
of the Weil group $W_E$ via  class field theory. By induction, this gives rise to a 3-dimensional representation
\[  \rho_{\chi} := {\rm Ind}_{W_E}^{W_k} \chi  \quad \text{of $W_k$.} \]
We say that $\pi$ is  obtained from $\chi$ by automorphic induction if its  Hecke-Stakae family $c(\pi)$ is equal to the family of Frobenius classes of $\rho_{\chi}$, and express this as $\pi = {\rm AI}_{E/k}(\chi)$. 

\vskip 5pt

If $E/k$ is an arbitrary  Galois extension with cyclic Galois group of prime order, then the existence of ${\rm AI}_{E/k}(\chi)$ has been shown by Arthur-Clozel \cite{AC} using the twisted trace formula. When $E/k$ is non-Galois cubic, the existence of ${\rm AI}_{E/k}(\chi)$ has been shown by Jacquet-Piatetski-Shapiro-Shalika \cite[\S 14]{JPSS} using the converse theorem.
There is also an alternative approach by Kazhdan via exceptional theta correspondence \cite{K}. 
\vskip 5pt

When $\pi ={\rm AI}_{E/k}(\chi)$, we shall see that the adjoint lifting of $\pi$ to $\GL_8$ more directly, without recourse to the twisted trace formula. 
Indeed, from the viewpoint of Galois representations, one has:
\[  \rho_{\chi} \otimes \rho_{\chi}^{\vee} = {\rm Ind}_{W_E}^{W_k} \chi \otimes  {\rm Ind}_{W_E}^{W_k} \chi^{-1} 
= {\rm Ind}_{W_E}^{W_k}\left(  \left(  {\rm Ind}_{W_E}^{W_k} \chi \right)|_{W_E} \otimes \chi^{-1} \right). \]
We shall now consider the Galois and non-Galois case separately.
\vskip 5pt

\subsection{\bf Galois case}
 When $E/k$ is Galois,  let $\sigma$ be a generator of  ${\rm Gal}(E/k) \simeq \Z/3\Z$ and let $\{ \omega_{E/k}, \omega_{E/k}^{-1} \}$ be the pair of cubic Hecke characters of $\A^{\times}$ associated to $E/k$ by class field theory. Then
\[  \left(  {\rm Ind}_{W_E}^{W_k} \chi \right)|_{W_E}  \simeq \chi + \chi^{\sigma} + \chi^{\sigma^2} \]
so that
 \[   \rho_{\chi} \otimes \rho_{\chi}^{\vee} \simeq  {\rm Ind}_{W_E}^{W_k} 1  \oplus   {\rm Ind}_{W_E}^{W_k} (\chi^{\sigma}/\chi ) \oplus   {\rm Ind}_{W_E}^{W_k} (\chi^{\sigma^2}/\chi). \] 
Hence,
\[  {\rm Ad}(\rho_{\chi}) \simeq \omega_{E/k} \oplus \omega_{E/k}^{-1} \oplus  {\rm Ind}_{W_E}^{W_k} (\chi^{\sigma}/\chi ) \oplus  {\rm Ind}_{W_E}^{W_k} (\chi^{\sigma^2}/\chi).\]
Now observe that each of the summands above has an automorphic analog and hence we may construct the (weak) adjoint lifting of $\pi$ as an isobaric sum:
\begin{equation} \label{E:Add}
 {\rm Ad}(\pi) =  \omega_{E/k} \boxplus \omega_{E/k}^{-1} \boxplus  {\rm AI}_{E/k}( \chi^{\sigma}/ \chi) \boxplus {\rm AI}_{E/k}(\chi^{\sigma^2}/ \chi). \end{equation}
Thus, we see that in this case, the adjoint lifting of $\pi$ is non-cuspidal, as it contains (at least) two $\GL_1$-summands. Indeed, we have the following result:
\vskip 5pt

\begin{thm} \label{T:AI1}
Let $\pi$ be a cuspidal representation of $\GL_3$. Then the following are equivalent:
\vskip 5pt
\begin{itemize}

\item[(i)] $\pi$ is an automorphic induction from  some Galois cubic extension;
\vskip 5pt

\item[(ii)] ${\rm Ad}(\pi)$ exists as an automorphic representation of $\GL_8$ and contains a $\GL_1$-summand in its isobaric sum decomposition. 
\vskip 5pt

\item[(iii)] $\pi$ is invariant under twisting by some nontrivial Hecke character (necessarily cubic). 
\end{itemize}
\vskip 10pt

\noindent More precisely, when the above conditions hold, so that $\pi ={\rm AI}_{E/k}(\chi)$ for some Galois cubic extension $E/k$ with associated pair of cubic characters 
$\{ \omega_{E/k}, \omega_{E/k}^{-1}\}$, then $\pi$ is invariant under twisting by $\omega_{E/k}^{\pm 1}$ and ${\rm Ad}(\pi)$ contains $\omega_{E/k}^{\pm 1}$ as $\GL_1$-summands.
Further,  we have the following two possibilities for the isobaric sum decomposition of ${\rm Ad}(\pi)$.
\vskip 5pt

\begin{itemize}
\item[(a)]  If $\chi^{\sigma} \cdot \chi^{\sigma^2} \ne  \chi^2$,  then the characters $\chi^{\sigma} / \chi$ and $\chi^{\sigma^2}/ \chi$ are non-invariant under ${\rm Gal}(E/k)$.
In this case, 
\[ {\rm Ad}(\pi) =  \omega_{E/k} \boxplus \omega_{E/k}^{-1} \boxplus  {\rm AI}_{E/k}( \chi^{\sigma}/ \chi) \boxplus {\rm AI}_{E/k}(\chi^{\sigma^2}/ \chi) \]
with the last two summands  cuspidal representations of $\GL_3$ (which are dual of each other). 
The two $\GL_3$-summands are isomorphic to each other if and only if $(\chi^2)^{\sigma} = \chi^2$. 
In particular, $\pi$ is an automorphic induction from precisely one Galois cubic field (namely $E/k$).
\vskip 5pt

\item[(b)]   If $\chi^{\sigma} \cdot \chi^{\sigma^2} =  \chi^2$, then the characters $\chi^{\sigma} / \chi$ and $\chi^{\sigma^2}/ \chi$ are invariant under ${\rm Gal}(E/k)$. If $\chi^{\sigma}/\chi = \mu \circ N_{E/k}$ for a Hecke character $\mu$ over $k$, then 
\[   {\rm Ad}(\pi) =  \omega_{E/k} \boxplus \omega_{E/k}^{-1} \boxplus  \mu \boxplus  \mu \omega_{E/k} \boxplus \mu \omega_{E/k}^{-1}  
\boxplus  \mu^{-1} \boxplus  \mu^{-1} \omega_{E/k} \boxplus \mu^{-1} \omega_{E/k}^{-1}. \]
In particular, $\mu$ is a cubic character and ${\rm Ad}(\pi)$ is an isobaric sum of $8$ distinct cubic Hecke characters. Hence, $\pi$ can be expressed as an automorphic induction from four distinct Galois cubic field extensions of $k$.
\end{itemize}
In particular, the self-dual automorphic representation ${\rm Ad}(\pi)$   is the transfer of an automorphic but non-cuspidal representation of $\SO_8$.
 \end{thm}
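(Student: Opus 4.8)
\emph{Proof strategy.} The plan is to run off the Galois-theoretic identity (\ref{E:Add}), Rankin--Selberg theory, and the cyclic base-change results of Arthur--Clozel \cite{AC}. Write $\pi = {\rm AI}_{E/k}(\chi)$ for a cyclic cubic $E/k$, $\sigma$ a generator of ${\rm Gal}(E/k)$, with associated cubic characters $\{\omega_{E/k}^{\pm1}\}$. The push--pull computation preceding (\ref{E:Add}) expresses ${\rm Ad}(\rho_\chi)$ as $\omega_{E/k}\oplus\omega_{E/k}^{-1}\oplus{\rm Ind}_{W_E}^{W_k}(\chi^\sigma/\chi)\oplus{\rm Ind}_{W_E}^{W_k}(\chi^{\sigma^2}/\chi)$; since automorphic induction from $E/k$ exists \cite{AC}, the isobaric sum in (\ref{E:Add}) is an automorphic representation of $\GL_8$ whose Satake parameters at unramified places are ${\rm Ad}(c(\pi_v))$, so it \emph{is} the weak adjoint lift ${\rm Ad}(\pi)$, and it contains $\omega_{E/k}^{\pm1}$ as $\GL_1$-summands; this proves (i)$\Rightarrow$(ii). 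For (ii)$\Leftrightarrow$(iii) I would use $L^S(s,\pi\times\pi^\vee)=\zeta^S(s)\,L^S(s,\pi,{\rm Ad})$: twisting by a Hecke character $\eta^{-1}$ and using that $L^S(s,\pi\times(\pi\otimes\eta)^\vee)$ has a pole at $s=1$ exactly when $\pi\cong\pi\otimes\eta$, one sees that a nontrivial $\eta$ is an isobaric summand of ${\rm Ad}(\pi)$ iff $\pi\cong\pi\otimes\eta$, whereas $\eta=1$ is never a summand (that would force a double pole of $L^S(s,\pi\times\pi^\vee)$). Comparing central characters, such an $\eta$ is cubic, giving (iii); and (iii)$\Rightarrow$(i) is exactly the Arthur--Clozel dichotomy that a cuspidal $\GL_3$-representation invariant under the cubic character cutting out a cyclic cubic $E/k$ is automorphically induced from $E/k$.

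For the finer structure I would set $\Sigma:=\{\eta:\pi\cong\pi\otimes\eta\}$, an $\mathbb{F}_3$-vector space containing $\{1,\omega_{E/k}^{\pm1}\}$; by the above, the number of $\GL_1$-summands of ${\rm Ad}(\pi)$ is $|\Sigma|-1$, so $|\Sigma|\in\{3,9\}$ since $\dim{\rm Ad}(\pi)=8$. The case split is governed by the elementary remark that $\chi^\sigma/\chi$ is ${\rm Gal}(E/k)$-invariant iff $\chi^\sigma\chi^{\sigma^2}=\chi^2$ (cyclically permute the relation $\chi\chi^{\sigma^2}=(\chi^\sigma)^2$), the same condition then applying to $\chi^{\sigma^2}/\chi$ by the $\sigma\leftrightarrow\sigma^2$ symmetry of the condition. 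In case (a) the characters $\chi^\sigma/\chi$ and $\chi^{\sigma^2}/\chi$ are non-invariant, so ${\rm AI}_{E/k}(\chi^\sigma/\chi)$ is cuspidal on $\GL_3$; hence ${\rm Ad}(\pi)$ is not of type $1^8$, which forces $|\Sigma|=3$, so $\pi$ is an automorphic induction from exactly one cyclic cubic field. That the two $\GL_3$-summands are mutually dual follows from $(\chi^{\sigma^2}/\chi)^\sigma=(\chi^\sigma/\chi)^{-1}$, and a short computation inside the ${\rm Gal}(E/k)$-orbit of $\chi^\sigma/\chi$ shows that they coincide precisely when $(\chi^2)^\sigma=\chi^2$.

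In case (b) the character $\chi^\sigma/\chi$ is ${\rm Gal}(E/k)$-invariant, hence of the form $\mu\circ N_{E/k}$; the projection formula then gives $\pi\otimes\mu={\rm AI}_{E/k}\big(\chi\cdot(\mu\circ N_{E/k})\big)={\rm AI}_{E/k}(\chi^\sigma)=\pi$, so $\mu\in\Sigma$ and $\mu$ is cubic. Since $\mu\ne 1$ and $\mu\notin\{\omega_{E/k}^{\pm1}\}$ (otherwise $\chi^\sigma/\chi$ would be trivial, contradicting cuspidality of $\pi$), the subgroup $\langle\mu,\omega_{E/k}\rangle\subseteq\Sigma$ has order $9$, so $|\Sigma|=9$, ${\rm Ad}(\pi)=\boxplus_{1\ne\eta\in\Sigma}\eta$ is a sum of $8$ distinct cubic characters, and $\pi$ is an automorphic induction from each of the four cyclic cubic fields attached to the four order-$3$ subgroups of $\Sigma\cong(\Z/3)^2$. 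The displayed explicit form then follows from ${\rm AI}_{E/k}(\nu\circ N_{E/k})=\nu\boxplus\nu\omega_{E/k}\boxplus\nu\omega_{E/k}^{-1}$ applied to $\nu=\mu$ and $\nu=\mu^2=\mu^{-1}$, using that $\chi^{\sigma^2}/\chi=(\chi^\sigma/\chi)\cdot(\chi^{\sigma^2}/\chi^\sigma)=(\mu\circ N_{E/k})^2$.

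Finally, ${\rm Ad}(\pi)$ is self-dual because $\mathfrak{sl}_3\simeq\mathfrak{sl}_3^\vee$ as $\GL_3(\C)$-modules, of orthogonal type because ${\rm Ad}$ has image in $\SO_8(\C)$, and of trivial central character because $\det\circ\,{\rm Ad}=1$; and it is non-cuspidal since it contains $\omega_{E/k}^{\pm1}$. To conclude that it is the $\SO_8\to\GL_8$ transfer of a non-cuspidal automorphic representation of split $\SO_8$, I would read off from the explicit decompositions in (a) and (b) that the constituents assemble into orthogonal blocks ($\omega_{E/k}\boxplus\omega_{E/k}^{-1}$; a self-dual cuspidal $\GL_3$-block; or a six-dimensional block ${\rm AI}\boxplus{\rm AI}^\vee$) each of which is the standard transfer of a cuspidal representation of a Levi subgroup of $\SO_8$ of type $\GL_1\times\GL_1$, $\GL_3\times\GL_1$, or $\GL_2\times\GL_2$, so that ${\rm Ad}(\pi)$ is the transfer of a constituent of the corresponding Eisenstein series. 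I expect this last step — matching the explicit isobaric data with functoriality from $\SO_8$ outside the discrete/tempered range — together with the verification that (\ref{E:Add}) genuinely computes the adjoint lift (correct Satake parameters at all places, unitarity of the summands), to be the points requiring the most care; the remainder is bookkeeping on top of \cite{AC} and standard Rankin--Selberg theory.
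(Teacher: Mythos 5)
Your proposal is correct and follows essentially the same route as the paper: (i)$\Rightarrow$(ii) via the explicit decomposition (\ref{E:Add}) of ${\rm Ad}(\rho_\chi)$ combined with Arthur--Clozel automorphic induction, (ii)$\Rightarrow$(iii) via the identity $L^S(s,\mu^{-1})\,L^S(s,{\rm Ad}(\pi)\times\mu^{-1})=L^S(s,\pi\times(\pi\otimes\mu)^\vee)$ and the Jacquet--Shalika pole criterion, (iii)$\Rightarrow$(i) by Arthur--Clozel, and the refined cases (a), (b) read off from (\ref{E:Add}) by the same elementary character manipulations (Galois-invariance of $\chi^\sigma/\chi$ iff $\chi^\sigma\chi^{\sigma^2}=\chi^2$, duality and isomorphism criteria for the $\GL_3$-summands). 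Your $\mathbb{F}_3$-vector-space bookkeeping with $\Sigma=\{\eta:\pi\simeq\pi\otimes\eta\}$ merely makes explicit the ``one versus four cubic fields'' count that the paper handles in a remark, and your admittedly sketchy treatment of the closing assertion that ${\rm Ad}(\pi)$ is the transfer of a non-cuspidal representation of $\SO_8$ is no less complete than the paper's own proof, which does not argue that point.
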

\vskip 5pt

\noindent{\bf Remark}:
\vskip 5pt

\noindent (i)  If $\pi$ can be obtained from two Galois cubic field extensions $E_1$ and $E_2$, then $\pi$ is invariant under twisting by $\omega_{E_1/k}$ and $\omega_{E_2/k}$. Hence $\pi$ is also invariant under twisting by $\omega_{E_1/k} \cdot \omega_{E_2/k}$ and $\omega_{E_1/k} \cdot \omega_{E_2/k}^{-1}$.  The latter two cubic characters correspond to two other Galois cubic extensions $E_3$ and $E_4$. The four cubic extensions $E_i$ are precisely the four cubic subfields contained in the bi-cyclic-cubic ($=\Z/3\Z \times \Z/3\Z$) extension which is the composition of $E_1$ and $E_2$.  Hence, if $\pi$ is an automorphic induction from more than one Galois cubic extension, it is automatically an automorphic induction from four Galois cubic extensions. Part  (b) of the theorem implies  that $\pi$ cannot be an automorphic induction from more than four Galois cubic extensions.
\vskip 5pt

\noindent (ii) The condition on $\chi$  in (b) of the Theorem  is the same condition in \cite[\S 6.1, Prop. 2 (1)] {L1}, which implies that the global multiplicity of the L-packet $\mathcal{L}[\pi]$ of $\SL_3$ is $2$. Under the condition in (a), the global multiplicity of $\mathcal{L}[\pi]$ is $1$.

\vskip 5pt

\begin{proof}
Let us first show the equivalence of (i)-(iii):
\vskip 5pt

\begin{itemize}
\item (i) $\Longrightarrow$ (ii): this has been shown by our discussion before the statement of the theorem.
\vskip 5pt

\item (ii) $\Longrightarrow$ (iii): Suppose ${\rm Ad}(\pi)$ contains $\mu$ as a $\GL_1$-summand. Then the partial L-function $L^S(s, {\rm Ad}(\pi) \times \mu^{-1})$ has a pole at $s = 1$. But one has an identity of partial L-functions:
\[  L^S(s, \mu^{-1}) \cdot L^S(s, {\rm Ad}(\pi) \times \mu^{-1}) = L^S(s, \pi \times (\pi \otimes \mu)^{\vee}). \]
Hence the partial L-function on the RHS also has a pole at $s=1$. By Jacquet-Shalika, this implies that $\pi \simeq \pi \otimes \mu$, as desired. 
Further, $\mu$ is necessarily cubic, since $\omega_{\pi} = \omega_{\pi \otimes \mu} = \omega_{\pi} \cdot \mu^3$, where $\omega_{\pi}$ denotes the central character of $\pi$.
\vskip 5pt

\item  (iii) $\Longrightarrow$ (i): this is due to Arthur-Clozel \cite{AC}
\end{itemize}
This proves the equivalence of (i), (ii) and (iii). Moreover, when these conditions hold, we see by (\ref{E:Add}) that if $\pi ={\rm AI}_{E/k}(\chi)$, then $\omega_{E/k}^{\pm 1}$ are $\GL_1$-summands of ${\rm Ad}(\pi)$. Indeed, (\ref{E:Add}) gives the more precise description in (a) and (b) immediately. 

\end{proof}

\vskip 5pt

\subsection{\bf Non-Galois case}
We now consider the non-Galois case. 
If $E/k$ is a non-Galois cubic extension, with discriminant quadratic field extension $K = K_E$, then $L = E \cdot K_E$ is the Galois closure of $k$ in $\overline{k}$. Hence,  $L/k$ is an $S_3$-extension, and the cubic extension $L/K$ is Galois. We let $\sigma$ be a generator of ${\rm Gal}(L/K) \simeq \Z/3\Z$ and note that $L$ contains two other cubic subfields besides $E$, namely 
\[ E_{\sigma} =\{ \sigma(e) : e \in E \} \quad \text{and} \quad  E_{\sigma^2} = \{ \sigma^2(e): e \in E\}. \]
\vskip 5pt

In this case, if $\chi$ is a Hecke character of $E$, then 
\[ \left( {\rm Ind}_{W_E}^{W_k} \chi \right)|_{W_E} \simeq \chi \oplus {\rm Ind}_{W_L}^{W_E} (\chi|_{W_L}^{\sigma}). \]
Here, note that $\chi|_{W_L}^{\sigma}$  is  defined by $\chi^{\sigma}(w) = \chi(\sigma^{-1}w \sigma)$  for  $w \in W_L$. To simplify notation, we shall omit $|_{W_L}$ in what follows and regard the restriction to $W_L$ as implicit.
Hence
\[  \rho_{\chi} \otimes \rho_{\chi}^{\vee} \simeq  {\rm Ind}_{W_E}^{W_k} 1  \oplus  {\rm Ind}_{W_L}^{W_k}  (\chi^{\sigma}/ \chi),\]
so that
\[  {\rm Ad}(\rho_{\chi}) \simeq  \rho_E \oplus {\rm Ind}_{W_L}^{W_k}  (\chi^{\sigma}/ \chi). \]
Noting that $\rho_E \simeq {\rm Ind}_{W_K}^{W_k} \omega_{L/K}$ and 
\[  {\rm Ind}_{W_L}^{W_k}  = {\rm Ind}_{W_K}^{W_k}  \circ {\rm Ind}_{W_L}^{W_K} \]
is a composite of inductions through cyclic extensions of prime degrees, we see that the automorphic analog of ${\rm Ad}(\rho_{\chi})$ can be constructed.

\vskip 5pt

More precisely, if $\pi = {\rm AI}_{E/k}(\chi)$, then
\begin{equation} \label{E:Add2}
  {\rm Ad}(\pi)  =  {\rm AI}_{K/k}(\omega_{L/K})  \boxplus {\rm AI}_{L/k}\left(  \chi_L^{\sigma} / \chi_L \right) \end{equation}
as an automorphic representation of $\GL_8$, where we have written $\chi_L := \chi \circ N_{L/E}$ for the base change of $\chi$ from $E$ to $L$.
Henceforth, we will write $\rho_E$ to denote its automorphic analog $ {\rm AI}_{K/k}(\omega_{L/K})$ as well.

\vskip 5pt

Now one has the following analog of Theorem \ref{T:AI1}:
\vskip 5pt

\begin{thm} \label{T:AI2}
Let $\pi$ be a unitary cuspidal representation of $\GL_3$.  The following are equivalent:
\vskip 5pt

\begin{itemize}
\item[(i)] $\pi$ can be realized as an automorphic induction from a non-Galois cubic extension.
\vskip 5pt

\item[(ii)]  ${\rm Ad}(\pi)$ exists as an automorphic representation of $\GL_8$ and contains a unitary cuspidal $\GL_2$-summand in its isobaric  decomposition. 
\vskip 5pt

\item[(iii)]  for some unitary cuspidal representation $\tau$ of $\GL_2$, $\pi \boxtimes \tau  \supset \pi$ in its isobaric decomposition (where $\pi \boxtimes \tau$ is the Rankin-Selberg lifting from $\GL_2 \times \GL_3$ to $\GL_6$ established by Kim-Shahidi \cite{KS1}). 
\end{itemize}
\vskip 10pt

More precisely, if $\pi ={\rm AI}_{E/k}(\chi)$ for a non-Galois cubic extension $E/k$ with Galois closure $L/k$ and discriminant quadratic field $K/k$, then ${\rm Ad}(\pi)$ contains the cuspidal representation $\rho_E$ of $\GL_2$   as a summand in its isobaric decomposition.  Further, we have the following possibilities for the isobaric decomposition of ${\rm Ad}(\pi)$:
\vskip 5pt

\begin{itemize}
\item[(a)]  If 
\[ \chi_L^{\sigma} \cdot \chi_L^{\sigma^2}  \ne  \chi_L^2 \quad \text{and} \quad   (\chi_L^{\sigma})^2 \ne \chi_L^2, \]
 then ${\rm AI}_{L/k}\left( \chi_L^{\sigma}/ \chi_L \right)$ is a self-dual cuspidal representation of $\GL_6$ over $k$ with central character $\omega_{K/k}$, so that
 \[ {\rm Ad}(\pi) = \rho_E \oplus   {\rm AI}_{L/k}\left( \chi_L^{\sigma}/ \chi_L \right). \]
 
\vskip 5pt

\item[(b)]  If 
\[  (\chi_L^{\sigma})^2 = \chi_L^2, \]
 then  ${\rm AI}_{L/k}\left( \chi_L^{\sigma}/ \chi_L \right)$ 
 is a self-dual cuspidal representation of $\GL_3$ over $K$ which is invariant under ${\rm Gal}(K/k)$, so that 
\[  {\rm AI}_{L/k}\left( \chi_L^{\sigma}/ \chi_L \right) = A \oplus A  \cdot \omega_{K/k} \]
for some self-dual cuspidal representation $A$ of $\GL_3$ with trivial central character and
\[ {\rm Ad}(\pi) = \rho_E \oplus  A \oplus A \cdot \omega_{K/k}. \]

\item[(c)]  If 
\[ \chi_L^{\sigma} \cdot \chi_L^{\sigma^2}  =  \chi_L^2, \]
 then 
\[  {\rm AI}_{L/K}\left( \chi_L^{\sigma}/ \chi_L \right)   = \omega_1 \boxplus \omega_2 \boxplus \omega_3 \]
is the isobaric sum of three distinct cubic characters, defining three cyclic cubic extensions $E_1$, $E_2$ and $E_3$ of $K$, such that 
$E$ and the $E_i$'s are the four cubic extensions in  the bi-cyclic-cubic extension $E \cdot E_i$ of $K$, and each  $E_i/k$ is an $S_3$-extension.
In this case, 
\[ {\rm Ad}(\pi) = \rho_E \oplus \rho_{E_1} \oplus \rho_{E_2} \oplus \rho_{E_3} \]
and $\pi$ can be obtained by automorphic induction from four  distinct non-Galois cubic extensions.  
\end{itemize}

In particular,  ${\rm Ad}(\pi)$ is the transfer of a globally generic discrete (indeed cuspidal) automorphic  representation of $\SO_8$.
\end{thm}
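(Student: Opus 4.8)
The plan is to establish the equivalences (i)$\Leftrightarrow$(ii)$\Leftrightarrow$(iii) first, then to read off the cases (a)--(c) from an explicit computation with Galois representations, and finally to deduce the $\SO_8$-descent. For (i)$\Rightarrow$(ii) I would continue the computation made just before the statement: with $\pi={\rm AI}_{E/k}(\chi)$ and $\rho_\chi={\rm Ind}_{W_E}^{W_k}\chi$ one has ${\rm Ad}(\rho_\chi)\simeq\rho_E\oplus{\rm Ind}_{W_L}^{W_k}(\chi_L^{\sigma}/\chi_L)$; since $L/k$ is an $S_3$-extension every induction appearing factors through cyclic sub-extensions of prime degree, so Arthur--Clozel \cite{AC} produces the automorphic representation (\ref{E:Add2}), and its two-dimensional summand $\rho_E={\rm AI}_{K/k}(\omega_{L/K})$ is cuspidal because ${\rm Gal}(K/k)$ inverts ${\rm Gal}(L/K)\simeq\Z/3\Z$, so $\omega_{L/K}$ is not ${\rm Gal}(K/k)$-invariant. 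For (ii)$\Leftrightarrow$(iii), let $\tau$ be a unitary cuspidal representation of $\GL_2$; at the level of partial $L$-functions one has
\[ L^S(s,\pi^\vee\times\pi\times\tau^\vee)=L^S(s,\tau^\vee)\cdot L^S(s,{\rm Ad}(\pi)\otimes\tau^\vee), \]
and the left-hand side equals $L^S(s,\pi^\vee\times(\pi\boxtimes\tau^\vee))$ via the Kim--Shahidi lift \cite{KS1}. Since $L^S(s,\tau^\vee)$ is holomorphic and non-vanishing at $s=1$, the Jacquet--Shalika criterion for poles of Rankin--Selberg $L$-functions shows that $\tau$ is an isobaric summand of ${\rm Ad}(\pi)$ iff $\pi$ is an isobaric summand of $\pi\boxtimes\tau^\vee$; replacing $\tau$ by $\tau^\vee$ and using that ${\rm Ad}(\pi)$ is self-dual gives (ii)$\Leftrightarrow$(iii). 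For (iii)$\Rightarrow$(i), from $\pi\subset\pi\boxtimes\tau$ I would first conclude that $\pi\boxtimes\tau$ is not cuspidal and then apply the Ramakrishnan--Wang cuspidality criterion \cite{RW} for the functorial product on $\GL_2\times\GL_3$: it forces $\pi$ to be either a twist of a symmetric-square lift of a $\GL_2$-form, or dihedral, i.e. an automorphic induction from a cubic field; the symmetric-square case and the cyclic-cubic case are then excluded using the precise shape $\pi\subset\pi\boxtimes\tau$ together with Theorem \ref{T:AI1} (in the cyclic-cubic case ${\rm Ad}(\pi)$ carries a $\GL_1$- rather than a cuspidal $\GL_2$-summand), leaving exactly the non-Galois cubic case.

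For the cases (a), (b), (c) I would analyse the degree-$6$ isobaric representation ${\rm AI}_{L/k}(\psi)$, $\psi=\chi_L^{\sigma}/\chi_L$, through ${\rm AI}_{L/k}={\rm AI}_{K/k}\circ{\rm AI}_{L/K}$. An elementary manipulation shows $\psi$ is ${\rm Gal}(L/K)$-invariant iff $\chi_L^{\sigma}\chi_L^{\sigma^2}=\chi_L^2$; that is case (c), where ${\rm AI}_{L/K}(\psi)=\omega_1\boxplus\omega_2\boxplus\omega_3$ is a sum of three cubic characters of $\A_K^\times$ and inducing down yields the four dihedral $\GL_2$-summands. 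When $\psi$ is not $\sigma$-invariant, ${\rm AI}_{L/K}(\psi)$ is cuspidal and self-dual on $\GL_3$ over $K$, and it is ${\rm Gal}(K/k)$-invariant iff $(\chi_L^{\sigma})^2=\chi_L^2$ (case (b)), where ${\rm AI}_{L/k}(\psi)=A\boxplus A\omega_{K/k}$ with $A$ self-dual cuspidal and --- by the determinant formula for induced representations, using that $\psi$ restricts trivially to $\A_k^\times$ because $\sigma$ fixes $k$ --- of trivial central character; otherwise ${\rm AI}_{L/k}(\psi)$ is cuspidal on $\GL_6$ with central character $\omega_{K/k}$ (case (a)). The remaining self-duality, distinctness and central-character assertions follow from the same determinant formula and the $S_3$-structure of ${\rm Gal}(L/k)$.

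Finally, for the $\SO_8$-statement: in each of (a), (b), (c) the representation ${\rm Ad}(\pi)$ is a multiplicity-free isobaric sum of cuspidal self-dual representations, and every summand is of orthogonal type --- the two-dimensional dihedral summands $\rho_E,\rho_{E_i}$ have determinant $\omega_{K/k}\neq1$, so $L(s,-,{\rm Sym}^2)$ acquires a pole at $s=1$ from a $\zeta_k$-factor; the $\GL_3$-summand $A$ has trivial central character and is orthogonal because an odd general linear group carries no self-dual symplectic cuspidal representation; and the $\GL_6$-summand ${\rm AI}_{L/k}(\psi)$ has non-trivial central character $\omega_{K/k}$ and so cannot be symplectic. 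Moreover the product of the central characters of all summands equals $\omega_{K/k}^2=1$. On the dual side ${\rm Ad}$ maps $\PGL_3(\C)$ into $\SO_8(\C)$, so the parameter ${\rm Ad}\circ\rho_\chi$ takes values in $\SO_8(\C)$, and being a sum of distinct orthogonal self-dual pieces with trivial total determinant it is an elliptic orthogonal parameter for the split $\SO_8$. By the backward functorial lift for $\SO_8\to\GL_8$ (\cite{A,CKPSS}) together with the Ginzburg--Rallis--Soudry descent and its image characterization, a parameter of this shape is exactly the functorial transfer of a globally generic cuspidal automorphic representation of the split $\SO_8$, which is the assertion.

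\textbf{Main obstacle.} The delicate step is (iii)$\Rightarrow$(i): extracting from ``${\rm Ad}(\pi)$ contains a cuspidal $\GL_2$-summand'' the Ramakrishnan--Wang dichotomy and then cleanly eliminating the non-dihedral and cyclic-cubic alternatives. A second, purely technical burden is the degeneration analysis (a)/(b)/(c), which requires careful bookkeeping of cuspidality, self-duality, central characters and pairwise non-isomorphism of all isobaric summands in terms of idele-class characters over the $S_3$-extension $L/k$.
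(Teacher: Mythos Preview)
Your proposal is correct and follows essentially the same route as the paper: the cycle (i)$\Rightarrow$(ii)$\Rightarrow$(iii)$\Rightarrow$(i) via the explicit formula (\ref{E:Add2}), the $L$-function identity $L^S(s,\pi\times\pi^\vee\times\tau^\vee)=L^S(s,\tau^\vee)\cdot L^S(s,{\rm Ad}(\pi)\times\tau^\vee)$, and the Ramakrishnan--Wang criterion, together with the case split (a)/(b)/(c) through the factorization ${\rm AI}_{L/k}={\rm AI}_{K/k}\circ{\rm AI}_{L/K}$, all match the paper's argument.

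On the step you flag as delicate, (iii)$\Rightarrow$(i): the paper simply cites \cite[Thm.~3.1]{RW} as a black box giving directly that $\pi\subset\pi\boxtimes\tau$ forces $\pi$ to be a non-Galois cubic automorphic induction, so no separate elimination of the symmetric-square or cyclic-cubic branches is needed. Your eliminations do work, but be aware that the case split in \cite{RW} is indexed by whether $\tau$ (not $\pi$) is dihedral: when $\tau$ is non-dihedral and $\pi$ is a twist of ${\rm ad}_2(\tau)$, one has $\pi\boxtimes\tau$ of isobaric type $2+4$, which cannot contain the $3$-dimensional $\pi$; your exclusion of the cyclic-cubic case via Theorem~\ref{T:AI1} is fine since in that case ${\rm Ad}(\pi)$ is already known to exist and has no $\GL_2$-summand. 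Finally, your justification of the $\SO_8$-descent is considerably more detailed than the paper's, which asserts it in a single line without proof.
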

\vskip 5pt

\begin{proof}
We first show the equivalence of (i)-(iii). 
\vskip 5pt
\begin{itemize}
\item (i) $\Longrightarrow$ (ii): this  follows by discussion before the theorem, in particular (\ref{E:Add2}). 
\vskip 5pt

\item (ii) $\Longrightarrow$ (iii):  if ${\rm Ad}(\pi)$ contains a $\GL_2$ cuspidal summand $\tau$ in its isobaric decomposition, then the partial L-function $L^S(s, {\rm Ad}(\pi) \times \tau^{\vee})$ has a pole at $s=1$.  Now we have the L-function identity:
\[  L^S(s, \pi \times \pi^{\vee} \times \tau^{\vee}) = L^S(s, \tau^{\vee}) \cdot L^S(s, {\rm Ad}(\pi) \times \tau^{\vee}), \]
where all L-functions above are of Langlands-Shahidi type; indeed, the L-function on the LHS occurs when one considers the maximal parabolic subgroup in $E_6$ corresponding to the branch vertex in the Dynkin diagram.
From this identity, we deduce that the LHS has a pole at $s=1$. But
\[  L^S(s, \pi \times (\pi \boxtimes \tau)^{\vee})) =  L^S(s, \pi \times \pi^{\vee} \times \tau^{\vee}). \]
Hence $L^S(s, \pi \times (\pi \boxtimes \tau^{\vee}))$ has a pole at $s=1$, which implies that $\pi \otimes \tau$ contains $\pi$ in its isobaric decomposition.
\vskip 5pt

\item (iii) $\Longrightarrow$ (i): 
We shall make use of a cuspidality criterion for the Rankin-Selberg lifting from $\GL_2 \times \GL_3$ to $\GL_6$ shown by Ramakrishnan-Wang \cite{RW}. 
Suppose that $\pi \boxtimes \tau$ contains $\pi$ in its isobaric decomposition.  By \cite[Thm. 3.1]{RW}, this can happen only if 
$\pi$ is an automorphic induction from a non-Galois cubic extension of $k$.
\end{itemize}
\vskip 5pt

It follows from (\ref{E:Add2}) that when $\pi ={\rm AI}_{E/k}(\chi)$ for some non-Galois cubic extension $E/k$, then  ${\rm Ad}(\pi)$ contains the cuspidal $\GL_2$-summand $\rho_E$. 
To determine the possible isobaric decomposition of ${\rm Ad}(\pi)$, it remains to investigate the term 
\[  {\rm AI}_{L/k}\left( \chi_L^{\sigma}/ \chi_L \right) = {\rm AI}_{K/k} \left( {\rm AI}_{L/K} \left( \chi_L^{\sigma}/ \chi_L \right) \right) \quad \text{in (\ref{E:Add2}).}  \]
We examine the mutually exclusive cases (a)-(c) in the theorem in turn:
\vskip 5pt

\begin{itemize}
\item[(a)] Under the two conditions in (a),  ${\rm AI}_{L/K} \left( \chi_L^{\sigma}/ \chi_L \right)$ is a cuspidal representation of $\GL_3$ over $K$, and is not invariant under ${\rm Gal}(K/k)$. Hence
$ {\rm AI}_{L/k}\left( \chi_L^{\sigma}/ \chi_L \right) $ is a self-dual cuspidal representation of $\GL_6$ over $k$, which is self-dual with central character $\omega_{K/k}$ (since ${\rm Ad}(\pi)$ is self-dual with trivial central character).
\vskip 5pt

\item[(b)] Under the conditions in (b), ${\rm AI}_{L/K} \left( \chi_L^{\sigma}/ \chi_L \right)$ is a self-dual cuspidal representation of $\GL_3$ over $K$ with trivial central character, which  is invariant under ${\rm Gal}(K/k)$.  
This implies that $ {\rm AI}_{L/k}\left( \chi_L^{\sigma}/ \chi_L \right)$ has the shape indicated in (b).
\vskip 5pt

\item[(c)] Under the conditions in (c), ${\rm AI}_{L/K} \left( \chi_L^{\sigma}/ \chi_L \right)$  is the isobaric sum of three automorphic characters $\omega_i$, satisfying
\[  (\omega_i)_L = \chi_L^{\sigma} / \chi_L., \]
so that $\omega_2 = \omega_1 \cdot \omega_{L/K}$ and $\omega_3 = \omega_1 \cdot \omega_{L/K}^{-1}$ (without loss of generality).
Each of the $\mu_i$'s is a cubic character; this is because $\mu_i$ occurs as a summand in the isobaric decomposition of ${\rm Ad}(\pi_K)$, where $\pi_K$ denotes the base change of $\pi$ to $K$, which is still cuspidal.
With $E_i/K$ denoting the Galois cubic extensions defined by $\omega_{E_i/K}$, it is clear from the above relations between $\omega_i$ and $\omega_{L/K}$ that $E$ and the $E_i$'s are the four cubic subfields in the bi-cyclic-cubic field $E \cdot E_i$ (alternatively, one can appeal to Theorem \ref{T:AI1}, applied to $\pi_K$. To see that $E_i$ is non0Galis over $k$, we observe that if $c$ is the non-trivial element in ${\rm Gal}(L/E)$ (so that $c|_K$ is the non-trivial element in ${\rm Gal}(K/k)$),   then
\[  (\chi_L^{\sigma} / \chi_L )^c = \chi_L^{\sigma_2} / \chi_L =  (\chi_L^{\sigma} / \chi_L )^{-1}, \]
under the hypothesis of (c).  This implies that the conclusion of (c). 
  \end{itemize}
\end{proof}
\vskip 5pt

\begin{cor}
A cuspidal representation of $\GL_3$ cannot be simultaneously obtained as 
 an  automoprhic induction from a Galois cubic extension and also an automorphic induction from a non-Galois cubic extension.

\end{cor}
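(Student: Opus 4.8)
The plan is to deduce the corollary as a bookkeeping consequence of Theorems \ref{T:AI1} and \ref{T:AI2}, using the fact that the isobaric decomposition of ${\rm Ad}(\pi)$ is uniquely attached to $\pi$ and comparing the list of types it can take in the two cases.

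First I would argue by contradiction: assume there is a cuspidal representation $\pi$ of $\GL_3$ which is simultaneously an automorphic induction ${\rm AI}_{E_1/k}(\chi_1)$ from a Galois cubic extension $E_1/k$ and an automorphic induction ${\rm AI}_{E_2/k}(\chi_2)$ from a non-Galois cubic extension $E_2/k$. In either presentation, the discussion preceding the two theorems already constructs ${\rm Ad}(\pi)$ explicitly as an isobaric automorphic representation of $\GL_8$ — see (\ref{E:Add}) and (\ref{E:Add2}) — and it is the \emph{same} object in both presentations, since it is pinned down by $\pi$: its Satake parameter at any unramified place $v$ is ${\rm Ad}(c(\pi_v))$, so by strong multiplicity one for isobaric representations of $\GL_8$ the representation ${\rm Ad}(\pi)$, and hence its isobaric decomposition, does not depend on which automorphic-induction presentation of $\pi$ we used.

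Next I would invoke the two theorems to constrain that decomposition from two sides. Applying Theorem \ref{T:AI1} to $\pi = {\rm AI}_{E_1/k}(\chi_1)$, the isobaric decomposition of ${\rm Ad}(\pi)$ is of type $1+1+3+3$ or $1^8$; in particular it contains at least one $\GL_1$-summand. Applying Theorem \ref{T:AI2} to $\pi = {\rm AI}_{E_2/k}(\chi_2)$, the isobaric decomposition of ${\rm Ad}(\pi)$ is of type $2+6$, $2+3+3$ or $2^4$; in particular every cuspidal constituent has dimension at least $2$, so there is no $\GL_1$-summand. These two descriptions of one and the same isobaric decomposition are incompatible, and this contradiction proves the corollary.

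I do not expect a genuine obstacle here: once Theorems \ref{T:AI1} and \ref{T:AI2} are in hand this is purely a matter of comparing the allowed isobaric types, and the only point worth stating carefully is that ${\rm Ad}(\pi)$ is one and the same representation in both presentations. If one wishes to sidestep even that remark, an equivalent short route is available: a Galois cubic automorphic induction $\pi$ is invariant under twisting by a nontrivial cubic Hecke character $\omega$, so $L^S(s,\pi\times\pi^{\vee}\otimes\omega^{-1})$ has a pole at $s=1$, whence $\omega$ occurs as an isobaric summand of $\pi\times\pi^{\vee} = {\rm Ad}(\pi)\boxplus 1$; since $\omega\neq 1$, this gives a $\GL_1$-summand of ${\rm Ad}(\pi)$, contradicting the shape of ${\rm Ad}(\pi)$ provided by Theorem \ref{T:AI2} when $\pi$ is a non-Galois cubic automorphic induction.
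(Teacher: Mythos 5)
Your argument is correct and is essentially the paper's intended one: the corollary is stated without proof precisely because, as you say, Theorem \ref{T:AI1} forces ${\rm Ad}(\pi)$ to contain a $\GL_1$-summand while Theorem \ref{T:AI2} forces an isobaric type $2+6$, $2+3+3$ or $2^4$ with no $\GL_1$-summand, and these cannot both hold for the single isobaric representation ${\rm Ad}(\pi)$ (well-defined by strong multiplicity one for isobaric representations). Your remark pinning down that ${\rm Ad}(\pi)$ is independent of the automorphic-induction presentation, and the alternative route via the pole of $L^S(s,\pi\times\pi^{\vee}\otimes\omega^{-1})$, are both fine but add nothing beyond the paper's implicit reasoning.
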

\vskip 5pt

\vskip 10pt

\section{\bf Self-Dual Case}
In this section, we consider the case when the cuspidal representation $\pi$ of $\GL_3$ is  the twist of a self-dual cuspidal representation.
In this case, there is no loss of generality in assuming that $\pi$ is self dual  and hence is the adjoint lift ($=$ Gelbart-Jacquet lift) of a cuspidal representation $\tau$ of $\GL_2$ \cite{GJ}. 
We shall write $\pi ={\rm ad}_2(\tau)$.

\vskip 5pt

From a Galois theoretic point of view, if $\tau$ is associated with a 2-dimensional Galois representation $\rho$, then ${\rm ad}_2(\tau)$ is associated to the 3-dimensional Galois representation ${\rm ad}_2(\rho) := {\rm Sym}^2(\rho) \otimes \det(\rho)^{-1}$.  Then
\[ {\rm Ad}({\rm ad}_2(\rho) )  \simeq  {\rm ad}_2(\rho) \oplus {\rm ad}_4(\rho) \]
where
\[ {\rm ad}_4(\rho) = {\rm Sym}^4(\rho) \otimes \det(\rho)^{-2}. \]
Now the ${\rm Sym}^4$-lifting from $\GL_2$ to $\GL_5$ has been shown by H. Kim \cite{Ki}, and hence the automorphic analog of  ${\rm ad}_4(\rho)$ is  known to exist; we shall denote it by ${\rm ad}_4(\tau)$. In particular, this allows us to construct the adjoint lifting of $\pi ={\rm ad}_2(\tau)$ as:
\begin{equation} \label{E:Add3}
 {\rm Ad}(\pi)= \pi \boxplus {\rm ad}_4(\tau) = {\rm ad}_2(\tau) \boxplus {\rm ad}_4(\tau). \end{equation}
Now we have:
\vskip 5pt
\begin{thm} \label{T:selfdual}
Let $\pi$ be a unitary cuspidal representation of $\GL_3$ and assume that $\pi$ is not an automorphic induction from a cubic field extension of $k$ (Galois or not). 
Then the following are equivalent:
\vskip 5pt

\begin{itemize}
\item[(i)]  $\pi$ is the twist by an automorphic character of a self-dual cuspidal representation;
\vskip 5pt

\item[(ii)]  ${\rm Ad}(\pi)$ exists as an automorphic representation of $\GL_8$ and contains a cuspidal $\GL_3$-summand in its isobaric decomposition;
\end{itemize}
More precisely, when the above conditions hold with $\pi$ self-dual, the isobaric decomposition of ${\rm Ad}(\pi)$ is of type $3 +5$ and is given by (\ref{E:Add3}). 
In particular,  ${\rm Ad}(\pi)$ is the transfer of a globally generic discrete (indeed cuspidal) automorphic  representation of $\SO_8$.
\end{thm}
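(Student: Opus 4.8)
The plan is to establish the equivalence $(i)\Leftrightarrow(ii)$ under the standing hypothesis that $\pi$ is not an automorphic induction from any cubic field, and then to read off the final assertion from the explicit decomposition (\ref{E:Add3}).

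\textbf{$(i)\Rightarrow(ii)$ and the shape $3+5$.} Since ${\rm Ad}$ kills the centre of $\GL_3$ one has ${\rm Ad}(\pi\otimes\chi)\simeq{\rm Ad}(\pi)$, so I would first reduce to $\pi$ self-dual. A self-dual cuspidal representation of $\GL_3$ is of orthogonal type, hence by Gelbart--Jacquet \cite{GJ} it is the symmetric square lift of a cuspidal $\tau$ of $\GL_2$; write $\pi={\rm ad}_2(\tau)$. The representation-theoretic input is the decomposition of $\mathfrak{sl}_3$ under a principal $\SL_2$: its exponents being $1,2$, one gets ${\rm Ad}\circ{\rm ad}_2\simeq{\rm ad}_2\oplus{\rm ad}_4$ with ${\rm ad}_4={\rm Sym}^4\otimes(\det)^{-2}$. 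Combined with Kim's ${\rm Sym}^4$ lifting \cite{Ki}, this produces the automorphic identity (\ref{E:Add3}), ${\rm Ad}(\pi)={\rm ad}_2(\tau)\boxplus{\rm ad}_4(\tau)$, exhibiting the cuspidal $\GL_3$-summand $\pi={\rm ad}_2(\tau)$. To pin down the shape as $3+5$ I must show ${\rm ad}_4(\tau)$ is cuspidal: if $\tau$ were dihedral then ${\rm ad}_2(\tau)$, hence $\pi$, would contain a Hecke character, contradicting cuspidality; if $\tau$ were tetrahedral or octahedral then, since the $3$-dimensional projective representation ${\rm ad}_2(\tau)$ is induced from an index-$3$ subgroup, $\pi$ would be an automorphic induction from a cubic field, contrary to hypothesis. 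Hence $\tau$ is of none of these types and ${\rm ad}_4(\tau)={\rm Sym}^4(\tau)\otimes\omega_\tau^{-2}$ is cuspidal by Kim \cite{Ki}; as the two summands have distinct degrees the decomposition is exactly $3+5$.

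\textbf{$(ii)\Rightarrow(i)$ --- the main obstacle.} Suppose ${\rm Ad}(\pi)$ contains a cuspidal $\GL_3$-summand $\tau'$. Since ${\rm Ad}(\pi)$ is self-dual of orthogonal type with trivial central character (the self-dual part of Theorem \ref{T:intro3}, or directly from the Killing form on $\mathfrak{sl}_3$), $\tau'$ is self-dual and, being on $\GL_3$, orthogonal; by \cite{GJ} it equals ${\rm ad}_2(\tau)$ for some cuspidal $\tau$ of $\GL_2$. The hard step is to upgrade ${\rm ad}_2(\tau)\subset{\rm Ad}(\pi)$ to the statement that $\pi$ is a twist of a self-dual representation, because the relevant tensor constructions $\pi\otimes\pi^\vee$ (which builds ${\rm Ad}(\pi)$) and $\pi\otimes\pi$ (which builds ${\rm Sym}^2\pi$, whose non-cuspidality is equivalent to (i)) are unrelated unless $\pi$ is already self-dual up to twist. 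I would therefore argue at the level of Satake/Langlands data: the containment says that the image $\Gamma\subset\PGL_3(\C)$ of the projective parameter of $\pi$, which acts irreducibly on $\C^3$, has ${\rm Ad}=\mathfrak{sl}_3$ containing a $3$-dimensional orthogonal irreducible submodule. Classifying such $\Gamma$: the Zariski closure cannot be all of $\PGL_3(\C)$ (then ${\rm Ad}$ is irreducible); if it normalises a maximal torus then $\pi$ is monomial, which is excluded; and among the finite primitive subgroups of $\PGL_3(\C)$ (Blichfeldt: $A_4, S_4, A_5, A_6, \PSL_2(\F_7)$ and the Hessian group of order $216$), ${\rm Ad}$ has no $3$-dimensional constituent except for $A_4, S_4, A_5\subset\SO_3$ --- and for $A_4, S_4$ the representation $\pi$ is again a cubic automorphic induction, excluded. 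In the surviving cases $\Gamma$ lies in $\SO_3\simeq\PGL_2\subset\PGL_3(\C)$, i.e. $\pi^\vee\simeq\pi\otimes\chi$; since $\chi^3=\omega_\pi^{-2}$ the character $\chi$ is a square, and an elementary untwisting gives $\mu$ with $\pi\otimes\mu$ self-dual cuspidal, which is (i). An alternative and cleaner route, once all forward implications and the full cuspidality criterion (Theorem \ref{T:cuspidality}) are available, is elimination: a cuspidal $\pi$ of $\GL_3$ that is neither a cubic automorphic induction nor a twist of a self-dual representation has ${\rm Ad}(\pi)$ of type $4+4$ or cuspidal, with no $\GL_3$-summand, so (ii) forces (i).

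\textbf{Final assertion.} Granting (\ref{E:Add3}), ${\rm Ad}(\pi)={\rm ad}_2(\tau)\boxplus{\rm ad}_4(\tau)$ is an isobaric sum of two inequivalent cuspidal representations of degrees $3$ and $5$, each self-dual of orthogonal type with trivial central character --- their parameters factoring through $\SO_3(\C)$ and $\SO_5(\C)$ via ${\rm Sym}^2$ and ${\rm Sym}^4$ of $\GL_2$, so that the product of their central characters is trivial. The associated isobaric datum is thus a discrete orthogonal parameter $\SO_3(\C)\times\SO_5(\C)\hookrightarrow\SO_8(\C)$, and by the descent (backward) direction of the $\SO_{2n}$-to-$\GL$ transfer \cite{CKPSS} it is the functorial lift of a globally generic cuspidal automorphic representation of the split group $\SO_8$, whose standard transfer to $\GL_8$ recovers ${\rm Ad}(\pi)$.
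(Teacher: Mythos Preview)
Your forward direction $(i)\Rightarrow(ii)$ is correct and matches the paper's treatment (the discussion leading to (\ref{E:Add3}) together with the Remark citing \cite{KS2}).

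The backward direction $(ii)\Rightarrow(i)$, however, has a genuine gap. Your main argument invokes ``the image $\Gamma\subset\PGL_3(\C)$ of the projective parameter of $\pi$'' and then runs a Blichfeldt-type classification of subgroups of $\PGL_3(\C)$. But no global Langlands parameter is available for a general cuspidal $\pi$ on $\GL_3$: you only have a family of local Satake parameters, not a single group $\Gamma$ whose Zariski closure you can analyze. The classification argument is therefore heuristic at best. Your alternative route via Theorem~\ref{T:cuspidality} is circular, since the proof of Theorem~\ref{T:cuspidality} explicitly invokes Theorem~\ref{T:selfdual} to rule out $\GL_3$-summands. There is also a smaller gap earlier: you assert that the self-dual $\GL_3$-summand $\tau'$ equals ${\rm ad}_2(\tau)$ for some $\tau$, but this requires $\omega_{\tau'}=1$, which does not follow automatically (a self-dual cuspidal representation of $\GL_3$ can have nontrivial quadratic central character).

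The paper's argument is entirely different and purely automorphic. Writing ${\rm Ad}(\pi)=A\boxplus B$ with $A$ on $\GL_3$ and $B$ on $\GL_5$, the key tool is the triality identity of Corollary~\ref{C:rep}, which gives
\[
L^S(s,{\rm Ad}(\pi)\times{\rm Ad}(\pi))=L^S(s,{\rm Ad}(\pi))\cdot L^S(s,{\rm Ad}(\pi),\wedge^3),
\]
all factors being Langlands--Shahidi $L$-functions. The left side has a double pole at $s=1$, forcing a double pole of the $\wedge^3$ $L$-function; expanding $\wedge^3(A\oplus B)$ and bounding each factor (using \cite{KS3} for the $\GL_3\times\GL_5$ pieces) shows this double pole can only occur if $\omega_A=1$, whence $A={\rm ad}_2(\tau)$. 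Finally, the pole of $L^S(s,{\rm Ad}(\pi)\times A)$ is fed into $L^S(s,(\pi\boxtimes\tau)\times(\pi\boxtimes\tau)^\vee)$ to force $\pi\boxtimes\tau$ non-cuspidal, and Ramakrishnan--Wang's cuspidality criterion \cite{RW} then yields that $\pi$ is a twist of ${\rm ad}_2(\tau)$.
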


\begin{proof}
Assuming (i), then (ii) follows by (\ref{E:Add3}). 
Now assume (ii) and suppose that ${\rm Ad}(\pi)$ contains a cuspidal $\GL_3$-summand $A$ in its isobaric decomposition. By the hypotheses on $\pi$ and Theorems \ref{T:AI1} and \ref{T:AI2},  we see that the isobaric decomposition of ${\rm Ad}(\pi)$ is of type $3+5$, so that 
\[ {\rm Ad}(\pi) = A  \boxplus B \]
with $B$ a cuspidal $\GL_5$-summand.
Since ${\rm Ad}(\pi) = {\rm Ad}(\pi)^{\vee}$ is necessarily self-dual with trivial central character, we deduce that $A$ and $B$ are both self-dual with the same central character. 
As $3$ and $5$ are odd, the self-dual representations $A$ and $B$ must be of orthogonal type.
\vskip 5pt

By Corollary \ref{C:rep}, we see that
\begin{equation} \label{E:exteriorcube}
  L^S(s, {\rm Ad}(\pi)  \times {\rm Ad} (\pi)) = L^S(s, {\rm Ad}(\pi)) \cdot L^S(s, {\rm Ad}(\pi), \wedge^3). \end{equation}
  Here,  note that all the partial L-functions  in this identity are  Langlands-Shahidi L-functions. Indeed, the exterior cube L-function for $\GL_8$ can be obtained as a Langlands-Shahidi L-function by considering the maximal parabolic subgroup 
  of $E_8$ with Levi factor of type $A_7$.  Since ${\rm Ad}(\pi)$ has two  cuspidal summands in its isobaric decomposition, the LHS has a pole of order $2$ at $s=1$. Hence 
$L^S(s, {\rm Ad}(\pi), \wedge^3)$ has a pole of order $2$ at $s=1$, since $L^S(s, {\rm Ad}(\pi))$ is entire and nonvanishing at $s=1$.
\vskip 5pt

On the other hand, as ${\rm Ad}(\pi) = A \oplus B$, $L^S(s, {\rm Ad}(\pi), \wedge^3)$ is equal to:
\begin{equation} \label{E:wedge3} 
 L^S(s, A, \wedge^3) \cdot L^S(s, A \times B, \wedge^2 \times {\rm std}) \cdot L^S(s, A \times B, {\rm std} \times \wedge^2) \cdot L^S(s, B, \wedge^3). \end{equation}
Now we note that:
\begin{itemize}
\item The L-function
\[  L^S(s, A, \wedge^3) = L^S(s, \omega_{A}), \]
where $\omega_{A}$ is the central character of $A$, has a pole at $s=1$ if and only if $\omega_{A}$ is trivial;

\item the L-function
\[  L^S(s, A \times B, \wedge^2 \times {\rm std}) = L^S(s, (A \cdot \omega_{A}) \times B) \]
is holomorphic  at $s=1$;

\item the L-function
\[  L^S(s, A \times B, {\rm std} \times \wedge^2) \]
has at most a simple pole at $s=1$ by a result of Kim-Shahidi \cite[Thm. 1(b) and Remark 1]{KS3}.

\item the L-function
\[ L^S(s, B, \wedge^3) = L^S(s, B, \wedge^2 \times \omega_{B}) \]
is a twisted exterior square L-function and does not have a pole at $s=1$ (since $5$ is odd).
\end{itemize}
Taken together, we see that the product of L-functions in (\ref{E:wedge3}) has at most a double pole at $s=1$ and this double pole is achieved ionly if $\omega_{A} = 1$.
Since we know that $L^S(s, {\rm Ad}(\pi), \wedge^3)$ has a double pole  at $s=1$, we deduce that $\omega_{A} = \omega_{B}$ is trivial. This shows that 
\[ A = {\rm ad}_2(\tau) \]
 is the  Gelbert-Jacquet lift of a cuspidal representation $\tau$ of $\GL_2$. Moreover, $\tau$ is non-dihedral since $A$ is cuspidal.
\vskip 5pt

It remains to show that $\pi$ is a twist of $A = {\rm ad}_2(\tau)$ by an automorphic character. For this, we note that the partial L-function
$L^S(s, {\rm Ad}(\pi) \times A)$  has a  pole at $s=1$. Now consider the L-function
\[ L^S(s, (\pi \boxtimes \tau) \times (\pi \boxtimes \tau)^{\vee}) \]
where $\pi \boxtimes \tau$ is an automorphic representation on $\GL_6$ obtained via the  Rankin-Selberg lifting from $\GL_2 \times \GL_3$ shown by Kim-Shahidi \cite{KS1}.
This L-function can be expressed as:
\begin{align}
  &L^S(s, {\rm Ad}(\pi) \times {\rm ad}_2(\tau)) \cdot L^S(s, {\rm Ad}(\pi)) \cdot  L^S(s, {\rm ad}_2(\tau) \cdot \zeta^S(s)  \notag \\
  = &L^S(s, {\rm Ad}(\pi) \times A) \cdot L^S(s, {\rm Ad}(\pi)) \cdot L^S(s,A) \cdot \zeta^S(s). \notag
  \end{align}
  We thus deduce that $ L^S(s, (\pi \boxtimes \tau) \times (\pi \boxtimes \tau)^{\vee})$ has a double pole at $s=1$; in particular, $\pi \boxtimes \tau$ is not cuspidal on $\GL_6$.
  Since $\tau$ is not dihedral, it follows by the cuspidal criterion of Ramakrishnan-Wang \cite[Thm. 3.1(a)]{RW} that $\pi$ is a twist of $A = {\rm ad}_2(\tau)$ by an automorphic character, as desired.
\end{proof}
\vskip 5pt

\begin{cor}
Let $\pi$ be a self-dual cuspidal automorphic representation of $\GL_3$ which is not an automorphic induction from a cubic field extension. If $\pi'$ is a cuspidal automorphic representation such that $\pi' {\thicksim}_w \pi$, then $\pi' {\thicksim}_s \pi$.
Hence, the global multiplicity of the cuspidal  L-packet $\mathcal{L}[\pi]$ is $1$. 
\end{cor}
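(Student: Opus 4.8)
The plan is to prove the sharper assertion that every cuspidal $\pi'$ of $\GL_3$ with $\pi'{\thicksim}_w\pi$ is already a twist of $\pi$ by a Hecke character, and then to extract the multiplicity statement by pure bookkeeping. First I would put $\pi$ in standard form. A self-dual cuspidal representation of $\GL_3$ is, as recalled at the beginning of this section, a Gelbart--Jacquet lift up to a quadratic twist; replacing $\pi$ by $\pi\otimes\epsilon$ and $\pi'$ by $\pi'\otimes\epsilon$ for a suitable quadratic Hecke character $\epsilon$ (which alters neither the hypotheses on $\pi$ nor the conclusion sought), I may assume $\pi={\rm ad}_2(\tau)$ for a cuspidal representation $\tau$ of $\GL_2$. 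Since $\pi$ is cuspidal, $\tau$ is non-dihedral; and since $\pi={\rm ad}_2(\tau)$ is not an automorphic induction from a cubic field, $\tau$ is neither tetrahedral nor octahedral, so by Kim--Shahidi \cite{KS1} the representation ${\rm Sym}^3\tau$ is cuspidal on $\GL_4$. (Equivalently, one is in the case $3+5$ of Theorem \ref{T:selfdual}.)

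For the main step, let $\pi'$ be any cuspidal representation of $\GL_3$ with $\pi'{\thicksim}_w\pi$, so that $\pi'_v\simeq{\rm ad}_2(\tau_v)\otimes\mu_v$ for all $v$ outside a finite set $S$ and suitable unramified characters $\mu_v$. Consider the Rankin--Selberg lift $\pi'\boxtimes\tau$ on $\GL_6$ produced by Kim--Shahidi \cite{KS1}. The decomposition of $\GL_2(\mathbb{C})$-representations $\mathrm{std}\otimes({\rm Sym}^2\otimes{\det}^{-1})=({\rm Sym}^3\otimes{\det}^{-1})\oplus\mathrm{std}$ shows that, for $v\notin S$, the Satake parameters of $\pi'\boxtimes\tau$ at $v$ are those of $\big(({\rm Sym}^3\tau\otimes\omega_\tau^{-1})\boxplus\tau\big)$ twisted by $\mu_v$. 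Consequently the twists $\mu_v$ cancel in $L^S(s,(\pi'\boxtimes\tau)\times(\pi'\boxtimes\tau)^\vee)$, and this $L$-function equals, up to factors holomorphic and nonvanishing at $s=1$, the product $L^S(s,{\rm Sym}^3\tau\times({\rm Sym}^3\tau)^\vee)\cdot L^S(s,\tau\times\tau^\vee)$, which has a double pole at $s=1$ ($\tau$ and ${\rm Sym}^3\tau$ being cuspidal of different ranks). Hence $\pi'\boxtimes\tau$ is not cuspidal. Since $\tau$ is non-dihedral, the cuspidality criterion of Ramakrishnan--Wang \cite[Thm. 3.1]{RW}, in precisely the form already invoked in the proof of Theorem \ref{T:selfdual}, forces $\pi'$ to be a twist of ${\rm ad}_2(\tau)=\pi$ by an automorphic character; that is, $\pi'{\thicksim}_s\pi$.

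Undoing the twist by $\epsilon$, this shows that the ${\thicksim}_w$-class of $\pi$ among cuspidal representations of $\GL_3$ consists of the single ${\thicksim}_s$-class $[\pi]_s$. As ${\thicksim}_s$ refines ${\thicksim}_{ew}$, which in turn refines ${\thicksim}_w$, the ${\thicksim}_{ew}$-class of $\pi$ is likewise $[\pi]_s$, so the multiplicity $m_{\mathcal{L}[\pi]}$ (the number of ${\thicksim}_s$-classes in the ${\thicksim}_{ew}$-class) equals $1$; inserting this into the definition (\ref{E:globalmult}), the sum defining $\mathcal{M}_{\mathcal{L}[\pi]}$ has the single term $m_{\mathcal{L}[\pi]}=1$, whence $\mathcal{M}_{\mathcal{L}[\pi]}=1$. (Alternatively: since $\pi$ is not an automorphic induction from a cyclic cubic extension, $\mathcal{L}[\pi]$ is non-endoscopic, so every $m_{\mathcal{L}[\pi']}$ occurring in (\ref{E:globalmult}) equals $1$, and the conclusion follows once the sum is seen to have one term.) The substantive input is the Ramakrishnan--Wang criterion; the remaining ingredients are routine, and the one place where the hypothesis that $\pi$ is not an automorphic induction from a cubic field is genuinely needed, beyond the final count, is in ensuring that ${\rm Sym}^3\tau$ is cuspidal, which makes the pole computation for $\pi'\boxtimes\tau$ clean.
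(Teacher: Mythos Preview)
Your proof is correct and proceeds by a genuinely different and more direct route than the paper's. The paper deduces the corollary from Theorem~\ref{T:selfdual}: it first constructs ${\rm Ad}(\pi)=\pi\boxplus{\rm ad}_4(\tau)$ on $\GL_8$ via Kim's ${\rm Sym}^4$-lift, observes that one may take ${\rm Ad}(\pi')={\rm Ad}(\pi)$ (since $\pi'$ is nearly equivalent to a twist of $\pi$), and then applies the implication (ii)$\Rightarrow$(i) of Theorem~\ref{T:selfdual} to $\pi'$---an implication whose own proof passes through the exterior-cube $L$-function identity (\ref{E:exteriorcube}) and the Kim--Shahidi pole estimate \cite{KS3} before finally reaching Ramakrishnan--Wang. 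Comparing the two $3+5$ isobaric decompositions of ${\rm Ad}(\pi')$ then pins down $\pi$ as the self-dual piece. Your argument bypasses $\GL_8$ and ${\rm Sym}^4$ entirely: you work directly with $\pi'\boxtimes\tau$ on $\GL_6$, use only the cuspidality of ${\rm Sym}^3\tau$ (so only the non-tetrahedral part of the hypothesis is really needed here) to read off the double pole, and invoke Ramakrishnan--Wang immediately. The gain is economy of input---no ${\rm Sym}^4$, no $\wedge^3$ analysis, no appeal to \cite{KS3}; the paper's route, on the other hand, shows how the corollary sits inside the adjoint-lift framework developed in the surrounding sections.
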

\vskip 5pt

\begin{proof}
By (\ref{E:Add3}) and Theorem \ref{T:selfdual} applied to $\pi$, ${\rm Ad}(\pi)$ is automorphic on $\GL_8$ and has isobaric decomposition $\pi \boxplus \tau$ for some cuspidal $\GL_5$-summand $\tau$. By hypothesis, $\pi'$ is nearly equivalent to $\pi \otimes \chi$ for some abstract character $\chi$\ of $\A^{\times}$.
Hence,  ${\rm Ad}(\pi')$ exists as an automorphic representation on $\GL_8$, since one could simply take 
\[ {\rm Ad}(\pi') := {\rm Ad}(\pi) = \pi \boxplus  \tau. \]
Hence, by Theorems \ref{T:AI1} and \ref{T:AI2}, we see that $\pi'$ is also not an automorphic induction from a cubic field extension.  Moreover,
 by Theorem \ref{T:selfdual} applied to $\pi'$, we deduce that 
 \[  \pi' \simeq \pi_0 \otimes \mu \]
 for some self-dual cuspidal representation $\pi_0$ and an automorphic character $\mu$.  But then we would have the isobaric decomposition
 \[  {\rm Ad}(\pi')  = \pi_0 \boxplus \tau_0, \]
 for some $\GL_5$-summand $\tau_0$.  Comparing the two isobaric decompositions of ${\rm Ad}(\pi')$ obtained above, we conclude that $\pi   =\pi_0$.  In particular, 
 \[  \pi' \simeq  \pi \otimes \mu \quad \text{ so that $\pi' {\thicksim}_s \pi$.} \]
\end{proof} 

\vskip 10pt

\noindent{\bf Remark}: Suppose $\pi = {\rm ad}_2(\tau)$ is a self-dual cuspidal automorphic representation of $\GL_3$, so that $\tau$ is not dihedral. Then as noted in \cite[Thm. 3.3.7]{KS2}, one has:
\vskip 5pt

\begin{itemize}
\item $\pi$ is not an automorphic induction from a cyclic cubic extension if and only if $\tau$ is not tetrahedral, in which case ${\rm Sym}^3(\tau)$ is cuspidal;
\vskip 5pt

\item $\pi$ is not an automorphic induction from a cubic extension (cyclic or noncyclic) if and only if $\tau$ is neither tetrahedral nor octahedral, in which case both ${\rm Sym}^3(\tau)$ and 
${\rm ad}_4(\tau)$ are cuspidal.
\end{itemize}

\vskip 10pt

\section{\bf Cuspidality Criterion}
In this final section, we consider the case when the cuspidal representation $\pi$ of $\GL_3$ is not of the type considered in the previous two sections, i.e. $\pi$ is neither an automorphic induction from a cubic field extension, nor a twist of a self-dual representation. 
We shall make the following hypothesis:
\vskip 5pt

\noindent{\bf Hypothesis}: 
Assume that the adjoint lifting ${\rm Ad}(\pi)$ has been constructed as an automorphic representation of $\GL_8$ via the sequence of functorial lifting in (\ref{E:seq}). 
\vskip 5pt

\noindent By Theorem \ref{T:main lifting}, the hypothesis is satisfied when $\pi$ has a discrete series local component. 
If one is willing to take Proposition \ref{P:shin-takanashi} for granted, then the hypothesis is always satisfied.
Now one has:
 
\vskip 5pt

\begin{thm} \label{T:cuspidality}
Let $\pi$ be a unitary cuspidal automorphic representation of $\GL_3$ which is not an automorphic induction from a cubic field extension or a twist of a self-dual representation. 
Assume that the above hypothesis holds for $\pi$. Then ${\rm Ad}(\pi)$ is a self-dual automorphic representation of orthogonal type with trivial central character. Moreover,
 one of the following holds:
\vskip 5pt
\begin{itemize}
\item[(i)]  ${\rm Ad}(\pi)$ is  cuspidal  and $L^S(s, {\rm Ad}(\pi), \wedge^3)$ has a simple pole at $s=1$.
\vskip 5pt

\item[(ii)]  ${\rm Ad}(\pi) = A \boxplus B$ with $A$ and $B$ self-dual cuspidal representations of orthogonal type with the same  central character which is a nontrivial quadratic character. If $K/k$ is the quadratic field extension determined by the central character of $A$,  then
$\pi_K$ is dihedral with respect to four distinct non-Galois cubic extensions of $K$.  Moreover, $L^S(s, {\rm Ad}(\pi), \wedge^3)$ has a double pole at $s=1$.
\end{itemize}
\end{thm}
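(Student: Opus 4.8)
The plan is to read off the isobaric shape of ${\rm Ad}(\pi)$ from the tensor identity of Corollary~\ref{C:rep} together with the cuspidality criteria of Theorems~\ref{T:AI1}, \ref{T:AI2} and \ref{T:selfdual}, and then, in the reducible case, to pin down the precise shape by base change to the quadratic field attached to the central character. To begin, $\mathrm{Ad}\colon\GL_3(\C)\to\GL_8(\C)$ factors through $\PGL_3(\C)$ and has image in $\SO_8(\C)$, since the Killing form on $\mathfrak{sl}_3$ is $\PGL_3$-invariant; hence ${\rm Ad}(\pi)$ is self-dual with trivial central character and of orthogonal type (and $L^S(s,{\rm Ad}(\pi),\Sym^2)$, not the exterior square, has a pole at $s=1$). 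By the standing hypothesis on $\pi$ and Theorems~\ref{T:AI1}, \ref{T:AI2}, \ref{T:selfdual}, the isobaric decomposition of ${\rm Ad}(\pi)$ has no cuspidal summand of dimension $1$, $2$ or $3$; since $\dim{\rm Ad}(\pi)=8$, it is either cuspidal or a sum $A\boxplus B$ of two cuspidal representations of $\GL_4$, necessarily with $A\not\simeq B$.

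Next I would apply Corollary~\ref{C:rep}, which gives $L^S(s,{\rm Ad}(\pi)\times{\rm Ad}(\pi))=L^S(s,{\rm Ad}(\pi))\cdot L^S(s,{\rm Ad}(\pi),\wedge^3)$. Since ${\rm Ad}(\pi)$ is self-dual with every summand of dimension $\ge 2$, the left side has a pole at $s=1$ of order equal to the number $r$ of cuspidal summands, while $L^S(s,{\rm Ad}(\pi))=\prod_iL^S(s,\sigma_i)$ is holomorphic and non-vanishing at $s=1$; hence ${\rm ord}_{s=1}L^S(s,{\rm Ad}(\pi),\wedge^3)=r$. If ${\rm Ad}(\pi)$ is cuspidal then $r=1$, and since ${\rm Ad}(\pi)$ is orthogonal this is exactly case~(i). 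Otherwise $r=2$ and $L^S(s,{\rm Ad}(\pi),\wedge^3)$ has a double pole, which I would exploit as follows: writing ${\rm Ad}(\pi)=A\boxplus B$, self-duality of $A,B$ and triviality of $\omega_A\omega_B$ force $\omega_A=\omega_B=:\eta$ with $\eta^2=1$, and using $\wedge^3 A\simeq A^\vee\otimes\omega_A$ together with Kim's exterior-square lift \cite{Ki} (so that $\wedge^2 A,\wedge^2 B$ are automorphic on $\GL_6$) one gets
\[ L^S(s,{\rm Ad}(\pi),\wedge^3)=L^S(s,A\otimes\eta)\,L^S(s,\wedge^2 A\times B)\,L^S(s,A\times\wedge^2 B)\,L^S(s,B\otimes\eta), \]
whose outer two factors are entire and non-zero at $s=1$; hence the double pole forces $B$ to occur in $\wedge^2 A$ and $A$ in $\wedge^2 B$.

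From this I would conclude that $\eta$ is nontrivial: if $\eta=1$ then $A$ is either symplectic, in which case $\wedge^2 A=\mathbf 1\boxplus(\GL_5\text{-summand})$, or orthogonal from $\SO_4$, in which case $\wedge^2 A$ is a sum of two $\GL_3$-summands — in neither case does $\wedge^2 A$ contain a cuspidal $\GL_4$-summand, a contradiction. So $A,B$ are orthogonal with $\eta=\omega_{K/k}$ for a quadratic field $K$. Since $\eta$ is nontrivial, $\pi\not\simeq\pi\otimes\eta$ (compare central characters), so $\pi_K$ is cuspidal, and base changing the recipe (\ref{E:seq}) gives ${\rm Ad}(\pi_K)={\rm Ad}(\pi)_K=A_K\boxplus B_K$ with $\omega_{A_K}=\omega_{K/k}|_{\A_K^\times}=1$. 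A cuspidal $\GL_4$-summand of ${\rm Ad}(\pi_K)$ with trivial central character, together with the remaining summand (cuspidal on $\GL_4$ or a sum of two $\GL_2$'s), would put ${\rm Ad}(\pi_K)$ outside every shape permitted for a cuspidal $\GL_3/K$-representation by Theorems~\ref{T:AI1}, \ref{T:AI2}, \ref{T:selfdual} and the already-established part~(i); hence $A_K=\gamma\boxplus\gamma^c$ and $B_K=\delta\boxplus\delta^c$ with $\gamma,\delta$ cuspidal on $\GL_2/K$, i.e. $A={\rm AI}_{K/k}(\gamma)$ and $B={\rm AI}_{K/k}(\delta)$. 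Then ${\rm Ad}(\pi_K)$ has four distinct cuspidal $\GL_2$-summands, so by Theorem~\ref{T:AI2}(c) applied over $K$, $\pi_K$ is an automorphic induction from four distinct non-Galois cubic extensions of $K$; the double pole of $L^S(s,{\rm Ad}(\pi),\wedge^3)$ was already obtained.

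The hardest part will be the structure theory underlying the last two paragraphs: proving that the two cuspidal $\GL_4$-summands are genuinely self-dual (rather than a dual pair $A,A^\vee$ with $A\not\simeq A^\vee$) and of orthogonal type with the asserted central character, and then that their base changes to $K$ fail to be cuspidal. This amounts to analysing a self-dual cuspidal $\GL_4$-representation with nontrivial quadratic central character through ${\rm O}_4(\C)\hookrightarrow\GL_4(\C)$ and $\SO_4(\C)\simeq(\SL_2\times\SL_2)/\mu_2$, computing $\wedge^2 A$ and $A_K$ in each subcase, and using once more that $B$ must be a constituent of $\wedge^2 A$ in order to force the two $\SL_2$-factors of $A$ over $K$ to be dihedral, so that $A_K$ is reducible; one must also exclude the degenerate possibility that $\gamma$ or $\delta$ splits further, which would produce a $\GL_1$-summand of ${\rm Ad}(\pi_K)$ and hence, by Theorem~\ref{T:AI1} applied over $K$, contradict the hypotheses on $\pi$.
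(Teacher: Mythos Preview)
Your strategy is broadly the same as the paper's: use Corollary~\ref{C:rep} to equate the order of pole of $L^S(s,{\rm Ad}(\pi),\wedge^3)$ with the number of isobaric summands, reduce to the $4+4$ case, and then base-change to a quadratic field and invoke Theorem~\ref{T:AI2}. However, there is a genuine gap at the step you yourself flag as ``the hardest part.''

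The issue is that self-duality of ${\rm Ad}(\pi)$ only gives $\{A,B\}=\{A^\vee,B^\vee\}$ as multisets, so either both are self-dual or $B\simeq A^\vee$ with $A\not\simeq A^\vee$. Your argument for $\eta\ne 1$, and your subsequent base-change argument, both presuppose the first alternative; your closing sketch for ruling out the second (``analysing a self-dual cuspidal $\GL_4$-representation \dots'') is circular, since the very point is that $A$ may not be self-dual. Relatedly, your early claim ``necessarily with $A\not\simeq B$'' is unjustified: if $A\simeq B$ the left side of the $L$-function identity has a pole of order $4$, not $2$, and it is precisely the bound ``at most a double pole on the right'' that the paper uses to exclude this.

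The paper circumvents the self-duality issue entirely. From the double pole one gets $B^\vee\subset\wedge^2 A$ and $A^\vee\subset\wedge^2 B$ (no self-duality assumed). The paper then proves a Lemma: whenever $\wedge^2 A$ contains a cuspidal $\GL_4$-summand, the cuspidality criterion of Asgari--Raghuram \cite{AR} forces $A$ (or $B$) to be invariant under twisting by a nontrivial quadratic character, by a case analysis over the three possible shapes (automorphic induction, Asai lift, or $\GSp_4$-lift). Once $A={\rm AI}_{K/k}(\tau)$, base change gives ${\rm Ad}(\pi_K)=\tau\boxplus\tau^c\boxplus B_K$ with visible $\GL_2$-summands, so Theorem~\ref{T:AI2}(c) over $K$ applies directly and yields the full structure --- including, a posteriori, the self-duality of $A$ and $B$ and the identification $\omega_A=\omega_B=\omega_{K/k}$. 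Your argument that $\eta\ne1$ via the symplectic/orthogonal dichotomy is clean and would work \emph{if} self-duality were already known, but the paper's route through Asgari--Raghuram is what actually closes the gap.
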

\vskip 5pt

\begin{proof}
It is clear that if ${\rm Ad}(\pi)$ exists as an automorphic representation on $\GL_8$, then it is self dual with trivial central character, regardless of whether the adjoint lifting is constructed via the sequence of liftings depicted in (\ref{E:seq}). If it is constructed via (\ref{E:seq}), then it follows that $\pi$ is of orthogonal type, since it is lifted from the split $\SO_8$.  This shows the first assertion of the Theorem.
\vskip 5pt

In view of the hypotheses on $\pi$ in the Theorem, as well as  Theorems \ref{T:AI1}, \ref{T:AI2} and \ref{T:selfdual}, we already know that ${\rm Ad}(\pi)$ cannot contain any cuspidal $\GL_1$-, $\GL_2$- or $\GL_3$-summands. Hence, the only possibilities for the isobaric decomposition of ${\rm Ad}(\pi)$ are of type $4+4$ or $8$. 
Assume thus that ${\rm Ad}(\pi) = A \boxplus B$ is an isobaric sum of two  cuspidal representations of $\GL_4$. We need to show that $A$ and $B$ are distinct self-dual of orthogonal type with nontrivial central character. 
 \vskip 5pt
 
 We will  again make use of the identity of L-functions:
 \begin{equation}  \label{E:exteriorcube2} 
  L^S(s, {\rm Ad}(\pi)  \times {\rm Ad} (\pi)) = L^S(s, {\rm Ad}(\pi)) \cdot L^S(s, {\rm Ad}(\pi), \wedge^3) \end{equation}
  given in (\ref{E:exteriorcube}). Since ${\rm Ad}(\pi)$ is self-dual, we see that the order of pole at $s=1$ of the LHS is:
  \begin{equation} \label{E:poles}
  \begin{cases} 
  2, \text{ if $A$ and $B$ are distinct;} \\
  4, \text{ if $A$ and $B$ are isomorphic.} \end{cases} \end{equation}
  On the other hand, if ${\rm Ad}(\pi) = A \boxplus B$, then $L^S(s, {\rm Ad}(\pi), \wedge^3)$ is equal to the following product:
  \[  L^S(s, A^{\vee} \times \omega_A) \cdot L^S(, B^{\vee} \times \omega_B) \cdot L^S(s, A \times B, \wedge^2 \times {\rm std})  \cdot L^S(s, B \times A, \wedge^2 \times {\rm std}), \] 
   where $\omega_A$ and $\omega_B$ denote the central characters of $A$ and $B$ respectively.  The exterior square functorial lifting from $\GL_4$ to $\GL_6$ has been shown by Kim \cite{Ki}.  Writing $\wedge^2A$ to denote the automorphic representation of $\GL_6$ which is the exterior square functorial lift of $A$, the above product can be written as:
    \[  L^S(s, A^{\vee} \times \omega_A) \cdot L^S(, B^{\vee} \times \omega_B) \cdot L^S(s, \wedge^2 A \times B)  \cdot L^S(s, \wedge^2B \times A). \]
Now the Rankin-Selberg L-functions $L^S(s, \wedge^2 A \times B)$ and $L^S(s, \wedge^2B \times A)$ have at most simple poles at $s=1$, since the isobaric decomposition of $\wedge^2A$ and $\wedge^2B$ can contain at most one $\GL_4$-cuspidal summand. 
Hence, the RHS of (\ref{E:exteriorcube2}) has at most a double pole at $s=1$. Comparing with (\ref{E:poles}), we conclude that $A$ and $B$ are distinct, and 
\[   \wedge^2 A \supset B^{\vee} \quad \text{and} \quad \wedge^2B \supset A^{\vee} \]
in their isobaric decompositions. In particular, $\wedge^2 A$ and $\wedge^2 B$ are not cuspidal and contain a cuspidal $\GL_4$-summand.
\vskip 5pt

We shall show:
\vskip 5pt

\begin{lemma}
Assume that $A$ and $B$ are distinct cuspidal representations of $\GL_4$ such that $\wedge^2 A \supset B^{\vee}$ and $\wedge^2 B \supset A^{\vee}$. Then at least one of $A$ or $B$ is invariant under twisting by a nontrivial quadratic character.
\end{lemma}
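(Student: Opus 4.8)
The plan is to argue by contradiction. Suppose that \emph{neither} $A$ \emph{nor} $B$ is invariant under twisting by a nontrivial quadratic Hecke character; I will derive a contradiction from the two hypotheses $\wedge^2 A \supset B^\vee$ and $\wedge^2 B \supset A^\vee$. First, since $\wedge^2 A$ and $\wedge^2 B$ are automorphic on $\GL_6$ (Kim \cite{Ki}) and each is \emph{non-cuspidal} — being an isobaric sum of a cuspidal $\GL_4$-representation and a rank-two complement — I would invoke the cuspidality criterion of Asgari--Raghuram \cite{AR}: a cuspidal representation of $\GL_4$ whose exterior square is non-cuspidal is, up to a Hecke twist, either automorphically induced from a quadratic extension or essentially self-dual. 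Automorphic induction from a quadratic extension $K/k$ makes the representation invariant under $\otimes\,\omega_{K/k}$, so this possibility is excluded by our standing assumption; hence both $A$ and $B$ are essentially self-dual. Being cuspidal, each of $A$, $B$ is then either essentially orthogonal or essentially symplectic, but not both, since the Rankin--Selberg $L$-function $L^S(s,A\times A^\vee)$ has a simple pole at $s=1$ carried by exactly one of its symmetric and exterior square factors.

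Next I would exclude the orthogonal type. Suppose $A$ is essentially orthogonal. Then, up to a Hecke twist, $A$ is the functorial lift from the split $\SO_4 \cong (\SL_2\times\SL_2)/\mu_2^{\Delta}$, i.e. $A \cong (\tau_1\boxtimes\tau_2)\otimes\mu$ for cuspidal representations $\tau_1,\tau_2$ of $\GL_2$ with $\omega_{\tau_1}=\omega_{\tau_2}$, where $\boxtimes$ is the Rankin--Selberg lifting of Ramakrishnan \cite{Ra1} (the quasi-split non-split $\SO_4$-lift is automorphically induced from a quadratic extension, hence excluded). Neither $\tau_i$ can be dihedral, for otherwise $A$ would be $\otimes\eta$-invariant for a nontrivial quadratic $\eta$. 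Consequently
\[ \wedge^2 A \cong \bigl((\Sym^2\tau_1\otimes\omega_{\tau_2})\boxplus(\Sym^2\tau_2\otimes\omega_{\tau_1})\bigr)\otimes\mu^2 \]
is an isobaric sum of two cuspidal representations of $\GL_3$ (Gelbart--Jacquet \cite{GJ}), so it has no cuspidal $\GL_4$-constituent, contradicting $B^\vee\subset\wedge^2 A$. Hence $A$ is essentially symplectic, and by the identical argument applied to $B$ via $A^\vee\subset\wedge^2 B$, so is $B$.

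Finally I would exclude the symplectic type and conclude. If $A$ is essentially symplectic then, up to a Hecke twist, $A$ is the lift of a generic cuspidal representation of $\GSp_4$, and $\wedge^2 A \cong \xi\boxplus\sigma$ where $\xi$ is a Hecke character and $\sigma$ is an isobaric automorphic representation of $\GL_5$ arising from the $\GSp_4$-parameter of $A$ through $\GSpin_5\to\SO_5\hookrightarrow\GL_5$; in particular $\sigma$ is essentially self-dual of orthogonal type. Comparing isobaric decompositions, the cuspidal $\GL_4$-summand $B^\vee$ of $\wedge^2 A$ must be a constituent of $\sigma$, and hence $B^\vee$ — a rank-four constituent of a representation of orthogonal type — is itself essentially orthogonal; therefore so is $B$. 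This contradicts Step 2, where we found $B$ to be essentially symplectic. The contradiction shows that at least one of $A$, $B$ is invariant under twisting by a nontrivial quadratic Hecke character, as asserted.

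The delicate point, and the main obstacle, is marshalling the structure theory of cuspidal $\GL_4$-representations \emph{with the correct twists}: identifying essentially orthogonal representations that are not quadratic-twist-images as $\SO_4$-lifts $\tau_1\boxtimes\tau_2$ with non-dihedral factors (so their exterior squares are of type $3+3$), and controlling the $\GL_5$-summand $\sigma$ in the symplectic case precisely enough to see that any rank-four constituent of it is again essentially orthogonal. Once these facts are in hand, the two hypotheses on $\wedge^2 A$ and $\wedge^2 B$ interlock with the orthogonal/symplectic dichotomy to force the conclusion.
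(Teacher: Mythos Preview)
Your overall strategy---reducing to the orthogonal/symplectic dichotomy for essentially self-dual cuspidal $\GL_4$-representations and then playing the two hypotheses off against each other---is different from the paper's case-by-case analysis, and it is an attractive idea. Step 3 is essentially correct: a cuspidal $\GL_4$-constituent of the $\GL_5$-part of $\wedge^2 A$ (which arises from $\SO_5 \hookrightarrow \GL_5$) is, by Arthur's classification, self-dual of orthogonal type, so $B$ would be essentially orthogonal and you reach a contradiction.

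The gap is in Step 2. Your parenthetical assertion that ``the quasi-split non-split $\SO_4$-lift is automorphically induced from a quadratic extension'' is false. A self-dual orthogonal cuspidal representation of $\GL_4$ descends to some quasi-split form of $\SO_4$; when that form is non-split (attached to a quadratic extension $K/k$), the resulting $\GL_4$-representation is the \emph{Asai lift} ${\rm As}^+_{K/k}(\tau)$ of a cuspidal representation $\tau$ of $\GL_2$ over $K$, not an automorphic induction. Generically such Asai lifts are \emph{not} invariant under any nontrivial quadratic twist, so they are not excluded by your standing hypothesis. Moreover, in this case one has
\[
\wedge^2 A \;\cong\; {\rm AI}_{K/k}\bigl({\rm ad}_2(\tau)\bigr)\cdot \omega_{\tau}|_{\A_k^\times},
\]
which \emph{can} contain a cuspidal $\GL_4$-summand (exactly when ${\rm ad}_2(\tau)$ has isobaric type $1+2$). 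So your claim that $\wedge^2 A$ is of type $3+3$ in the orthogonal case fails here. This is precisely the paper's case (b), which it handles by observing that the resulting $\GL_4$-summand $B^\vee$ is then itself an automorphic induction from $K$, whence $B$ is $\omega_{K/k}$-invariant---so one still reaches the desired conclusion, but it requires this extra argument that your proof omits.
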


\begin{proof}
We will make use of the cuspidality criterion for the exterior square lifting of $\GL_4$ established by Asgari  and Raghuram \cite{AR}.  By \cite[Prop. 3.1, Prop. 3.2, Prop. 3.3 and Prop. 3.5]{AR}, we see that for $\wedge^2 A$ to contain a cuspidal $\GL_4$-summand in its isobaric decompostion, 
one of the following must hold:
 \vskip 5pt
 
 \begin{itemize}
 \item[(a)]  $A \simeq {\rm AI}_{K/k}(\tau)$ is an automorphic induction of a cuspidal representation $\tau$ of $\GL_2$ over a quadratic field extension $K$ of $k$;
 \item[(b)] $A \simeq  {\rm As}_{K/k}^+(\tau)$ is an Asai lift of a cuspidal representation $\tau$ of $\GL_2$ over a quadratic field extension $K$ of $k$;
 \item[(c)] $A$ is obtained as a functorial lift for a globally generic cuspidal representation $\sigma$ of $\GSp_4$. 
  \end{itemize}
 
If (a) holds, then $A$ is invariant under twisting by the nontrivial quadratic character $\omega_{K/k}$ and we are done.
We examine the remaining two cases in turn:
\vskip 5pt

\begin{itemize}
\item[(b)] Under (b), it follows by \cite[Proof of Prop. 3.2]{AR} that
\[  \wedge^2 A = {\rm AI}_{K/k}( {\rm ad}_2(\tau)) \cdot \omega_{\tau}|_{\A_k^{\times}}. \]
For this to contain a cuspidal $\GL_4$-summand, it is necessary that ${\rm ad}_2(\tau)$ has isobaric decomposition of type $1+2$, say ${\rm ad}_2(\tau) = \nu \boxplus \sigma$, in which case the cuspidal $\GL_4$-summand is 
\[  B^{\vee} = {\rm AI}_{K/k}(\sigma) \cdot \omega_{\tau}|_{\A_k^{\times}}. \]
This shows that $B$ is invariant under twisting by the nontrivial quadratic character $\omega_{K/k}$.
\vskip 5pt

\item[(c)] Under (c),  $A$ is the functorial  lift of a globally generic cuspidal representation $\sigma$ of $\GSp_4$.  
In fact, this functorial lift can be constructed as a similitude theta lift from $\GSp_4$ to $\GSO_6$ (which is a quotient of  $\GL_4 \times \GL_1$ by a central $\mathbb{G}_m$). 
  By \cite[Prop. 3.3]{AR}, 
\[ \wedge^2 A =  {\rm std}_*(\sigma|_{\Sp_4})\cdot \omega_{\sigma}   \boxplus \omega_{\sigma}, \]
where ${\rm std}_*(\sigma)$ is the automorphic representation of $\GL_5$ obtained as a functorial lift of $\sigma$ via the morphism of dual groups
\[ {\rm std}:  \GSp_4^{\vee} = \GSp_4(\C) \longrightarrow \PGSp_4(\C) \simeq \SO_5(\C) \hookrightarrow \GL_5(\C). \]
The functorial lifting  for the first morphism is given simply by the restriction of automorphic forms from $\GSp_4$ to $\Sp_4$, whereas the second was shown by \cite{CKPSS, A}. 
\vskip 5pt

For $\wedge^2A$ to contain a cuspidal $\GL_4$-summand,  it is necessary that ${\rm std}_*(\sigma)$ has isobaric decomposition of type $1 +4$. Since $B^{\vee} \subset \wedge^2 A$, we must have 
${\rm std}_*(\sigma) = \nu \boxplus B^{\vee} \cdot\omega_{\sigma}^{-1}$ for some automorphic quadratic character $\nu$.  
Now this implies that $L^S(s, \sigma|_{\Sp_4}, {\rm std} \times \nu)$ has a pole at $s=1$, which implies that $\sigma$ has a nonzero theta lift to a quasi-split group $\GSO_4$ associated to $\nu$. 
 Since $\sigma$ has cuspidal theta lift to the split $\GSO_6$, it does not lift to the split $\GSO_4$ under the theta correspondence. Hence, $\nu$ must be nontrivial and $\sigma$ has nonzero theta lift to a quasi-split but non-split $\GSO_4$ associated to  $\nu$. This implies that $A$ is obtained as an automorphic induction from the quadratic field determined by $\nu$ and hence is invariant under twisting by $\nu$. 



\end{itemize}
\vskip 5pt

We have thus completed the proof of the lemma.

\end{proof}
\vskip 5pt

With the lemma in hand, we can now continue with the proof of  Theorem \ref{T:cuspidality}. By the lemma, we may assume without loss of generality that $A \otimes \omega_{K/k} \simeq A$ for some quadratic field extension $K/k$,
so that $A \simeq {\rm AI}_{K/k}(\tau)$ for some cuspidal representation $\tau$ of $\GL_2$ over $K$.  
Consider now the base change $\pi_K$ of $\pi$ to $K$, noting that $\pi_K$ is still cuspidal.  Then
\[  {\rm Ad}(\pi_K) = {\rm Ad}(\pi)_K  = A_K  \boxplus B_K = \tau \boxplus  \tau^c \boxplus B_K \]
where $c$ is the nontrivial element in ${\rm Gal}(K/k)$ and the base change $B_K$ has isobaric decomposition of type $2+2$ or $4$. 
\vskip 5pt

Now we may appeal to Theorem \ref{T:AI2}.  This says that:
\vskip 5pt
\begin{itemize}
\item $\tau = \rho_E$ for some non-Galois cubic extension $E/K$, whose Galois closure $\tilde{E}$ is an $S_3$-extension of $K$ with discriminant quadratic field $L/K$, so that 
\[  A_K = \rho_E \oplus \rho_{E^c}. \] 

\item the bi-cyclic-cubic field extension $\tilde{E} \cdot \tilde{E}^c$ over $L$ contains two other cubic extensions $\tilde{E}_1$ and $\tilde{E}_2 = \tilde{E}_1^c$, which are $S_3$-extensions over $K$ with discriminant field extension $L/K$, so that
\[  B_K = \rho_{E_1} \boxplus \rho_{E_2}, \] 
where $E_1$ and $E_2$ are cubic subfields of $\tilde{E}_1/K$ and $\tilde{E}_2 /K$.  
\end{itemize}
In partiular, we have:
\[ A \simeq {\rm AI}_{K/k}(\rho_E)  \quad \text{and} \quad B \simeq {\rm AI}_{K/k}(\rho_{E_1}). \]
Since $\rho_E$ and $\rho_{E_1}$ are self-dual representations with central character $\omega_{L/K}$, we deduce that $A$ and $B$ are self-dual and 
\[ \omega_A = \omega_B = \omega_{L/K}|_{\A^{\times}}. \]
On the other hand,  with $A$ and $B$ as above, one has \cite[proofs of Prop. 3.2 and Prop. 3.5]{AR}
\[ \wedge^2 A = {\rm As}_{K/k}^+(\rho_E) \cdot \omega_{K/k} \boxplus {\rm AI}_{K/k}(\omega_{L/K}) \quad \text{and} \quad 
\wedge^2 B = {\rm As}^+_{K/k}(\rho_{E_1}) \cdot \omega_{K/k} \boxplus {\rm AI}_{K/k}(\omega_{L/K}), \]
 so that
 \[  A \simeq {\rm As}^+_{K/k}(\rho_{E_1})  \cdot \omega_{K/k} \quad \text{and} \quad 
 B \simeq {\rm As}^+_{K/k}(\rho_E)  \cdot \omega_{K/k}. \]
 From these alternate expressions of $A$ and $B$, one deduces by \cite{Ra2}  that 
 \[ \omega_A = \omega_{K/k}. \]
 In particular, since $\omega_A$ and $\omega_B$ are nontrivial, $A$ and $B$ are necessarily of orthogonal type. 
 Moreover, we also conclude that $\omega_{L/K}|_{\A^{\times}} = \omega_{K/k}$, which implies  that $L/k$ is a cyclic degree 4 extension. 
 \end{proof}
\vskip 5pt

Here is an interesting corollary, which is an application of  our results on the possible isobaric decompositions for ${\rm Ad}(\pi)$ to the Rankin-Selberg lifting from $\GL_2 \times \GL_3$ to $\GL_6$.
\vskip 5pt

\begin{cor}
Suppose that
\begin{itemize}
\item  $\pi$ is  a unitary cuspidal representation of $\GL_3$ which is neither an automorphic induction from a cubic field extension nor a twist of a self-dual cuspidal representation, and  ${\rm Ad}(\pi)$ exists on $\GL_8$ (for example when $\pi$ has a discrete series local component);
\item $\tau_1$ and $ \tau_2$ are cuspidal representations of $\GL_2$ with the same central character. 
\end{itemize}
Then
\[  \pi \boxtimes \tau_1 \simeq \pi \boxtimes \tau_2 \Longrightarrow  \tau_1 \simeq \tau_2. \]
\end{cor}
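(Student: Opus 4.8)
The plan is to compute the order of the pole at $s=1$ of the Rankin--Selberg $L$-function $L^S(s,(\pi\boxtimes\tau_1)\times(\pi\boxtimes\tau_2)^\vee)$ in two ways. First I would check that, under the hypotheses on $\pi$, the lift $\pi\boxtimes\tau_i$ to $\GL_6$ is cuspidal for $i=1,2$: if $\tau_i$ is not dihedral, then by the cuspidality criterion of Ramakrishnan--Wang \cite[Thm.~3.1(a)]{RW}, non-cuspidality of $\pi\boxtimes\tau_i$ would force $\pi$ to be a twist of the Gelbart--Jacquet lift ${\rm ad}_2(\tau_i)$, contradicting the assumption that $\pi$ is not a twist of a self-dual cuspidal representation (the case in which $\pi$ is an automorphic induction from a cubic field being likewise excluded by hypothesis); if $\tau_i={\rm AI}_{K/k}(\mu)$ is dihedral, then $\pi\boxtimes\tau_i={\rm AI}_{K/k}(\pi_K\otimes\mu)$, and since $\pi\not\simeq\pi\otimes\omega_{K/k}$ (a central character count, as $\omega_\pi=\omega_\pi\omega_{K/k}$ would force $\omega_{K/k}=1$), $\pi_K\otimes\mu$ is cuspidal; moreover it cannot be ${\rm Gal}(K/k)$-invariant, for otherwise $\mu=\mu^c$ and $\tau_i$ would fail to be cuspidal, so $\pi\boxtimes\tau_i$ is again cuspidal. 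Consequently, using $\pi\boxtimes\tau_1\simeq\pi\boxtimes\tau_2=:\Sigma$, the $L$-function above equals $L^S(s,\Sigma\times\Sigma^\vee)$ with $\Sigma$ cuspidal on $\GL_6$, hence has a simple pole at $s=1$.

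Next, from the decomposition ${\rm std}\otimes{\rm std}^\vee\simeq\mathbf 1\oplus{\rm Ad}$ of $\GL_3(\C)$-representations one obtains the factorization
\[
L^S(s,(\pi\boxtimes\tau_1)\times(\pi\boxtimes\tau_2)^\vee)=L^S(s,\tau_1\times\tau_2^\vee)\cdot L^S\!\bigl(s,{\rm Ad}(\pi)\times(\tau_1\boxtimes\tau_2^\vee)\bigr),
\]
where $\tau_1\boxtimes\tau_2^\vee$ denotes the Ramakrishnan lift to $\GL_4$ \cite{Ra1}. The first factor has order of pole $1$ at $s=1$ if $\tau_1\simeq\tau_2$ and $0$ otherwise. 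For the second factor I would invoke Theorem \ref{T:cuspidality}: under the hypotheses, ${\rm Ad}(\pi)$ is either cuspidal on $\GL_8$ --- in which case the factor is holomorphic at $s=1$, since all constituents of ${\rm Ad}(\pi)$ then have dimension $8$ while those of $\tau_1\boxtimes\tau_2^\vee$ have dimension at most $4$ --- or ${\rm Ad}(\pi)=A\boxplus B$ with $A,B$ distinct self-dual cuspidal on $\GL_4$, in which case the order of pole of the factor is $[\tau_1\boxtimes\tau_2^\vee\simeq A]+[\tau_1\boxtimes\tau_2^\vee\simeq B]\le 1$. Comparing with the simple pole established above, we conclude that either $\tau_1\simeq\tau_2$, and we are done, or $\tau_1\boxtimes\tau_2^\vee\simeq A$ (or $\simeq B$) is cuspidal on $\GL_4$ and $\tau_1\not\simeq\tau_2$.

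To rule out the second possibility I would use exterior squares. If $\tau_1\boxtimes\tau_2^\vee\simeq A$, then combining Kim's exterior square lift for $\GL_4$ \cite{Ki}, the Gelbart--Jacquet correspondence \cite{GJ} and the equality of central characters $\omega_{\tau_1}=\omega_{\tau_2}$, the representation-theoretic identity $\wedge^2(V_1\otimes V_2)\simeq({\rm Sym}^2 V_1\otimes\wedge^2 V_2)\oplus(\wedge^2 V_1\otimes{\rm Sym}^2 V_2)$ for two-dimensional spaces $V_1,V_2$ yields $\wedge^2 A\simeq{\rm ad}_2(\tau_1)\boxplus{\rm ad}_2(\tau_2)$, an isobaric sum of automorphic representations of $\GL_m$ with $m\le 3$. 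This contradicts the fact, established in the proof of Theorem \ref{T:cuspidality}, that $\wedge^2 A$ contains a cuspidal $\GL_4$-summand; the case $\tau_1\boxtimes\tau_2^\vee\simeq B$ is excluded in exactly the same way. Hence $\tau_1\simeq\tau_2$.

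The analytic inputs --- holomorphy and non-vanishing at $s=1$ of the $\GL_8\times\GL_4$ Rankin--Selberg $L$-function, the pole of $L^S(s,\tau_1\times\tau_2^\vee)$, and the bookkeeping of central characters in the $\wedge^2 A$ computation --- are routine given the cited results. The step that requires the most care is confirming that the two hypotheses on $\pi$ are precisely what is needed to bring Theorem \ref{T:cuspidality} and the Ramakrishnan--Wang criterion into play simultaneously, in particular the verification that $\pi\boxtimes\tau_i$ remains cuspidal when $\tau_i$ is dihedral; I expect this to be the main (if modest) obstacle.
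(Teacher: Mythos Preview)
Your overall strategy coincides with the paper's: factor
\[
L^S(s,(\pi\boxtimes\tau_1)\times(\pi\boxtimes\tau_2)^\vee)=L^S(s,\tau_1\times\tau_2^\vee)\cdot L^S\bigl(s,{\rm Ad}(\pi)\times(\tau_1\boxtimes\tau_2^\vee)\bigr),
\]
note that the left side has a pole at $s=1$, and argue that the second factor on the right is holomorphic there, forcing $\tau_1\simeq\tau_2$. The difference lies in how holomorphy of the second factor is obtained. The paper simply observes that, by Theorem~\ref{T:cuspidality}, the cuspidal $\GL_4$-summands $A,B$ of ${\rm Ad}(\pi)$ have \emph{nontrivial} central character, whereas $\tau_1\boxtimes\tau_2^\vee$ has \emph{trivial} central character (since $\omega_{\tau_1}=\omega_{\tau_2}$); hence $\tau_1\boxtimes\tau_2^\vee$ cannot equal $A$ or $B$, and the factor is holomorphic in one line. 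Your route via $\wedge^2 A\simeq{\rm ad}_2(\tau_1)\boxplus{\rm ad}_2(\tau_2)$ and the $\GL_4$-summand $B^\vee\subset\wedge^2 A$ extracted from the proof of Theorem~\ref{T:cuspidality} is valid but considerably more roundabout; it recovers the same conclusion by a detour through Kim's exterior square lift.

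Your preliminary step establishing cuspidality of $\pi\boxtimes\tau_i$ is unnecessary: the paper only needs that the left side has \emph{some} pole at $s=1$, not a simple one, and the contrapositive ``if $\tau_1\not\simeq\tau_2$ then the second factor must carry the pole'' works regardless of the order. Moreover, your dihedral case has a gap: from $(\pi_K\otimes\mu)^c\simeq\pi_K\otimes\mu$ one only gets $\pi_K\simeq\pi_K\otimes(\mu^c/\mu)$, which does not force $\mu^c=\mu$; it could instead happen that $\mu^c/\mu$ is a nontrivial cubic character of $K$ and $\pi_K$ is an automorphic induction from the corresponding cubic extension of $K$, a scenario not excluded by the hypotheses on $\pi$ over $k$. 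Fortunately this entire step can simply be deleted.
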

\vskip 5pt

\begin{proof}
If  $\pi \boxtimes \tau_1 \simeq \pi \boxtimes \tau_2$, then the partial L-function
\[  L^S(s,  (\pi \boxtimes \tau_1) \times (\pi \boxtimes \tau_2)^{\vee}) = L^S(s, \pi \times \pi^{\vee} \times \tau_1 \times \tau_2^{\vee}) \]
has a pole at $s=1$.  But this L-function is equal to
\[  L^S({\rm Ad}(\pi) \times (\tau_1 \boxtimes \tau_2^{\vee})) \cdot  L^S(s, \tau_1 \times \tau_2^{\vee}). \]
Under the hypothesis on $\pi$, it follows by Theorem \ref{T:cuspidality} that ${\rm Ad}(\pi)$ is either cuspidal or an isobaric sum of two self-dual cuspidal representations of $\GL_4$ with nontrivial central character.
This implies that the L-function $L^S({\rm Ad}(\pi) \times (\tau_1 \boxtimes \tau_2^{\vee}))$ is holomorphic at $s=1$, since the self-dual representation $\tau_1 \boxtimes \tau_2$ has trivial central character. 
Hence, the L-function $L^S(s, \tau_1 \times \tau_2^{\vee})$ must have a pole at $s=1$, which implies that $\tau_1 \simeq \tau_2$.
\end{proof}
\vskip 5pt

\subsection{\bf Summary}
The following proposition summarizes an interesting property  which should play an important role in the characterization of the image of the adjoint lifting.
\vskip 5pt

\begin{cor}
Assume that $\pi$ is a cuspidal automorphic representation of $\GL_3$ which is not invariant under twisting by a nontrivial cubic automorphic character.
If $\pi$ is in the context of Theorem \ref{T:cuspidality}, assume that ${\rm Ad}(\pi)$ is constructed via the sequence of lifting (\ref{E:seq}). Then
${\rm Ad}(\pi)$ is a discrete orthogonal L-parameter.  Moreover, 
\[ {\rm ord}_{s=1} L^S( s, {\rm Ad}(\pi), \wedge^3) = {\rm ord}_{s=1} L^S(s, {\rm Ad}(\pi), {\rm Sym}^2) < 0,  \]
so that the order of poles at $s=1$ of $L^S(s, {\rm Ad}(\pi), \wedge^3)$  (which is an L-function of Langlands-Shahidi type) is equal to the number of summands in the isobaric decomposition of ${\rm Ad}(\pi)$.
\end{cor}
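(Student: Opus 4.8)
The plan is to deduce everything from the structural results already proved. First, by the equivalence (i)$\Leftrightarrow$(iii) of Theorem \ref{T:AI1}, a cuspidal $\pi$ of $\GL_3$ is invariant under twisting by a nontrivial automorphic character (necessarily cubic) if and only if it is an automorphic induction from a cyclic cubic extension; so the hypothesis on $\pi$ places it in exactly one of the three mutually exclusive situations handled by Theorems \ref{T:AI2}, \ref{T:selfdual} and \ref{T:cuspidality} (the latter under the stated assumption that ${\rm Ad}(\pi)$ is constructed via (\ref{E:seq})). Since ${\rm Ad}(\pi)$ only depends on $\pi$ up to twist (the adjoint representation is trivial on the center), I may assume $\pi$ unitary. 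In each of the three cases the cited theorem produces ${\rm Ad}(\pi)$ as an isobaric sum of \emph{pairwise distinct} self-dual unitary cuspidal representations, all of \emph{orthogonal} type, with trivial total central character and with \emph{no} $\GL_1$-summand, and asserts that ${\rm Ad}(\pi)$ is the functorial transfer of a globally generic discrete (indeed cuspidal) automorphic representation of the split $\SO_8$. Collecting these statements gives the first assertion, that ${\rm Ad}(\pi)$ is a discrete orthogonal L-parameter.

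Write ${\rm Ad}(\pi)=\Pi_1\boxplus\cdots\boxplus\Pi_r$ with the $\Pi_i$ distinct self-dual unitary cuspidal of orthogonal type, on $\GL_{n_i}$ with $n_i\geq 2$. By Corollary \ref{C:rep} one has the factorization
\[ L^S(s,{\rm Ad}(\pi)\times{\rm Ad}(\pi))=L^S(s,{\rm Ad}(\pi))\cdot L^S(s,{\rm Ad}(\pi),\wedge^3), \]
where the exterior cube L-function of $\GL_8$ is of Langlands--Shahidi type (the maximal parabolic of $E_8$ with Levi of type $A_7$, as already used in the proof of Theorem \ref{T:selfdual}). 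Because each $\Pi_i$ is cuspidal on some $\GL_{n_i}$ with $n_i\geq 2$, the standard L-function $L^S(s,{\rm Ad}(\pi))=\prod_i L^S(s,\Pi_i)$ is holomorphic and nonzero at $s=1$; and since the $\Pi_i$ are distinct and self-dual, $L^S(s,{\rm Ad}(\pi)\times{\rm Ad}(\pi))=\prod_{i,j}L^S(s,\Pi_i\times\Pi_j^{\vee})$ has a pole at $s=1$ of order exactly $r$, contributed by the diagonal terms $L^S(s,\Pi_i\times\Pi_i^{\vee})$ (each a simple pole by Jacquet--Shalika, the off-diagonal terms being holomorphic and nonzero at $s=1$). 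Hence ${\rm ord}_{s=1}L^S(s,{\rm Ad}(\pi),\wedge^3)=-r<0$, i.e.\ the pole order equals the number $r$ of isobaric summands.

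For the comparison with ${\rm Sym}^2$, I would use the companion factorization $L^S(s,{\rm Ad}(\pi)\times{\rm Ad}(\pi))=L^S(s,{\rm Ad}(\pi),{\rm Sym}^2)\cdot L^S(s,{\rm Ad}(\pi),\wedge^2)$ together with the fact that for a self-dual unitary cuspidal $\Pi_i$ the simple pole of $L^S(s,\Pi_i\times\Pi_i^{\vee})$ at $s=1$ lies in the symmetric square factor when $\Pi_i$ is orthogonal and in the exterior square factor when $\Pi_i$ is symplectic. As all $r$ summands are orthogonal, all $r$ poles land in $L^S(s,{\rm Ad}(\pi),{\rm Sym}^2)$, while $L^S(s,{\rm Ad}(\pi),\wedge^2)$ is holomorphic and nonzero at $s=1$; combining the two factorizations yields ${\rm ord}_{s=1}L^S(s,{\rm Ad}(\pi),{\rm Sym}^2)={\rm ord}_{s=1}L^S(s,{\rm Ad}(\pi),\wedge^3)=-r<0$. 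The proof is thus essentially bookkeeping on top of Theorems \ref{T:AI2}, \ref{T:selfdual} and \ref{T:cuspidality}; the only points requiring care are that the isobaric summands are pairwise distinct (so the pole order is exactly $r$ rather than a sum of squares of multiplicities), that none is a $\GL_1$-character (guaranteed precisely by the hypothesis on $\pi$ via Theorem \ref{T:AI1}), and that all are of orthogonal rather than symplectic type (so that the Rankin--Selberg pole is carried entirely by the symmetric square). All three are read off directly from the explicit isobaric decompositions furnished by those theorems, so I do not anticipate a genuine obstacle.
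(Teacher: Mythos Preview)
Your proposal is correct and is exactly the argument the paper leaves implicit: the corollary is stated in the paper as a summary without proof, and your bookkeeping via Corollary~\ref{C:rep}, the standard Rankin--Selberg pole count, and the ${\rm Sym}^2/\wedge^2$ dichotomy for orthogonal summands is precisely how one extracts it from Theorems~\ref{T:AI2}, \ref{T:selfdual} and \ref{T:cuspidality}. The only small remark is that you need not verify distinctness and orthogonality of the summands case by case, since in each of those three theorems the concluding assertion that ${\rm Ad}(\pi)$ is the transfer of a globally generic cuspidal representation of the split $\SO_8$ already encodes (via Arthur's classification) that the isobaric constituents are pairwise distinct and of orthogonal type.
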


One may wonder if the above property of the exterior cube L-function characterizes those discrete orthogonal parameter which arises as the adjoint lifting from $\GL_3$. As the exterior cube L-function occurs as a Langlands-Shahidi L-function in the group $E_8$, it is conceivable that one may establish this expectation by an automorphic decent construction from $\GL_8$ to $\GL_3$.
\vskip 10pt

\section{\bf Tetrahedral Representations} \label{S:artin}
In Theorems \ref{T:AI1}, \ref{T:AI2}, \ref{T:selfdual} and \ref{T:cuspidality},  we have described the possible shapes of the isobaric decomposition of ${\rm Ad}(\pi)$. One may ask if all these possibilities are actually attained. While it is not so surprising that the more nondegenerate cases actually happen, it may be less clear for the most degenerate cases in Theorem \ref{T:AI1}(b), Theorem \ref{T:AI2}(c) and Theorem \ref{T:cuspidality}(ii). In this final section, we will give a general construction of  cuspidal representations $\pi$
such that ${\rm Ad}(\pi)$ has the form in these degenerate cases. This construction was first studied by Blasius \cite{B} and  Lapid \cite[\S 5]{L2} in their work on multiplicities and rigidity of cuspidal $L$-packets of $\SL_n$.
\vskip 5pt

\subsection{\bf Jacobi group}
We shall consider the two dimensional symplectic vector space $W$ over the finite field $\Z/3\Z$. Choosing a Witt basis for $W$, we may identify $W$ with $\Z/3\Z \times \Z/3\Z$. Let $\Sp(W)$ 
denote the finite symplectic group attached to $W$, which is isomorphic to $\SL_2(\Z/3\Z)$. Note that 
\[ \SL_2(\Z/3\Z) \simeq  Q_8 \rtimes \Z/3\Z \]
where $Q_8$ denotes the finite quaternion group (with $8$ elements). 
In particular, $|\SL_2(\Z/3\Z)| = 24$ and $Q_8$ is its unique $2$-Sylow subgroup. Indeed, $Q_8$ is also the commutator subgroup of $\SL_2(\Z/3\Z)$.
\vskip 5pt

One can also form the Heisenberg group
\[ H(W) = W \oplus Z, \quad \text{with center $Z = \Z/3\Z$.} \]
Via its natural action on $W$, $\Sp(W)$ acts on $H(W)$ as group automorphisms fixing the center $Z$ elementwise. Hence one has the {\it Jacobi group}
\[   J(W) = H(W) \rtimes \Sp(W). \]
For our purpose, we will consider the following sequence of subgroups of $J(W)$:
\begin{equation} \label{E:chain}
  H(W) \subset H(W)  \cdot Z_{\Sp(W)}  \subset H(W) \cdot   C_4 \subset  H(W) \cdot Q_8 \subset J(W), \end{equation}
where $Z_{\Sp(W)} \simeq  \mu_2$ is the center of $\Sp(W)$ and $C_4$ is the cyclic group of order $4$ generated by the Weyl element
\[ w_0 = \left( \begin{array}{cc}
0 & 1 \\
-1 & 0 \end{array} \right). \]

\vskip 5pt

\subsection{\bf Tetrahedral representations} \label{SS:tetra}
By the Stone-von-Neumamn theorem, for a given nontrivial central character $\chi: \Z/3\Z \rightarrow \C^{\times}$, there is a unique (up to isomorphism) irreducible representation $\omega_{\chi}$ of $H(W)$. 
This irreducible representation is of dimension $3$ and can be constructed as follows. Let $L \subset W$ be any line (i.e. one-dimensional subspace) in $W$. Then $H(L) := L \oplus Z \subset H(W)$ is an abelian subgroup and we may extend $\chi$ from $Z$ to $H(L)$
trivially on $L$. Denoting this extended character by $\chi_L$, one has 
\[ \omega_{\chi} = {\rm Ind}_{H(L)}^{H(W)} \chi_L. \]
\vskip 5pt

By the theory of Weil representations, this representation can be extended to 
to $\Sp(W)$.  Since $\Sp(W) = \SL_2(\Z/3\Z)$ is not perfect, the extension is of course not unique, as we may twist by cubic characters of $\Sp(W)$. However, the restriction of these extensions to the commutator subgroup $Q_8$   are isomorphic. We will denote this representation of $H(W) \rtimes Q_8$ by $\omega_{\chi}$ as well.
Following Lapid \cite[\S 5]{L2}, we will call $\omega_{\chi}$ or its restriction to any subgroup of $H(W) \rtimes Q_8$ containing $H(W)$  a {\it generalized tetrahedral  representation}, though it would also be appropriate to call it a {\it Heisenberg-Weil representation}. 

\vskip 5pt

\subsection{\bf Setting of Theorem \ref{T:AI1}}
Let $M/k$ be a Galois extension with Galois group isomorphic to the Heisenberg group $H(W)$ and consider its irreducible 3-dimensional representation $\omega_{\chi}$ constructed above. 
The fixed field of the center $Z$ of $H(W)$ is thus a bi-cyclic-cubic extension $E_Z$ of $k$. 
For any line $L \subset W$, the fixed field of the abelian normal subgroup $H(L)$ is a Galois cubic extension $E_L$ of $k$. Since 
\[  \omega_{\chi} \simeq {\rm Ind}_{H(L)}^{H(W)} \chi_L, \]
we see that $\omega_{\chi}$ gives rise to the cuspidal representation 
\[ \pi = {\rm AI}_{E_L/k}(\chi_L). \]
In other words, $\pi$ is an automorphic induction from $E_L$. Since there are 4 possible lines $L \subset W$ (as $|\mathbb{P}^1(\mathbb{F}_3)| =4$), we see that $\pi$ is an automorphic induction from the 4 cubic subfields in the bi-cyclic-cubic extension $E_Z$ of $k$, so that we are in situation (b) of Theorem \ref{T:AI1}. In particular, ${\rm Ad}(\pi)$ is the sum of the 8 cubic characters $\omega_{E_L/k}^{\pm 1}$.

\vskip 5pt

\subsection{\bf Setting of Theorem \ref{T:AI2}}
Now suppose that $M/k$ is a Galois extension with Galois group $H(W) \rtimes \mu_2$. Let $K$ be the fixed field of $H(W)$, so that $K/k$ is a quadratic extension. If $E_Z$ is the fixed field of the center $Z$ of $H(W)$, then the action of ${\rm Gal}(K/k)$ on ${\rm Gal}(E_Z/K) =W$ is the action of $\mu_2 \subset \Sp(W)$ on $W$. In other words, it acts as $-1$ on ${\rm Gal}(E_Z/K) = W$. This shows that for each line $L \subset W$, its fixed field $E_L$ is a cubic extension of $K$, which is Galois over $k$ with ${\rm Gal}(E_L/k) \simeq S_3$. 
\vskip 5pt

 Now the irreducible 3-dimensional representation $\omega_{\chi}$ of $H(W)$ 
gives rise to a cuspidal representation $\pi_K$ of $\GL_3(\A_K)$ as described above. Hence, $\pi_K$ is an automorphic induction from any of the 4 cubic subfields $E_L$ of  $M/K$. 
Since $\omega_{\chi}$  extends to $H(W) \rtimes \mu_2$, the cuspidal representation $\pi_K$ is invariant under the action of ${\rm Gal}(K/k)$; in particular, it follows from \cite{AC} that $\pi_K$ is the base change  of a cuspidal representation  $\pi$ of $\GL_3$ over $k$.  Now the cuspidal representation $\pi$ is not unique, as we can twist it by the quadratic Hecke character $\omega_{K/k}$.
But there is a unique such $\pi$ whose central character $\omega_{\pi}$ corresponds to $\det \omega_{\chi}$ under global class field theory. 
By  \cite[\S 5, Prop. 2]{L2}, this $\pi$ then corresponds to $\omega_{\chi}$ under the purported global Langlands correspondence for $\GL_3$. 
Moreover, since ${\rm Ad}(\pi_K)$ is the isobaric sum of the 8 cubic characters $\omega_{E_L/K}^{\pm 1}$, we deduce that ${\rm Ad}(\pi)$ is the isobaric sum of 4 cuspidal representations of $\GL_2$. In particular, we are in the situation of Theorem \ref{T:AI2}(c).  
\vskip 5pt

\vskip 5pt

\subsection{\bf Setting of Theorem \ref{T:cuspidality}}
Finally, suppose that $M/k$ is a Galois extension with Galois group $H(W) \rtimes C_4$ and set
\[  \begin{cases}
  L  = \text{  fixed field of $H(W)$;} \\
  K = \text{ fixed field of $H(W) \rtimes \mu_2$;} \end{cases} \]
  Then $L/k$ is a cyclic degree $4$ extension with Galois group $C_4$. 
  By the previous subsection,  the irreducible representation $\omega_{\chi}$ of $H(W) \rtimes \mu_2$ gives rise to a cuspidal representation $\pi_K$ of $\GL_3(\A_K)$ such that ${\rm Ad}(\pi_K)$ is the isobaric sum of 4 cuspidal representations of $\GL_2(\A_K)$, corresponding to the 4  lines in $W$.
 \vskip 5pt

  Since $\omega_{\chi}$ extends to $H(W) \rtimes C_4$, we see that $\pi_K$ is invariant under ${\rm Gal}(K/F)$, and hence is the base change of a cuspidal representation $\pi$ of $\GL_3$ over $k$ by \cite{AC}. As in the previous subsection, there is a unique such $\pi$ if we require that its central character corresponds to $\det \omega_{\chi}$ under global class field theory.  
By  \cite[\S 5, Prop. 2]{L2} again, one knows that this unique $\pi$ corresponds to $\omega_{\chi}$ under the purported global Langlands correspondence.
  \vskip 5pt
  
   Now the action of ${\rm Gal}(L/k) = C_4$ on ${\rm Gal}(E_Z/L) = W$ (where $E_Z$ is the fixed field of the center $Z \subset H(W)$) is generated by the action of the Weyl element 
 \[ w_0 = \left( \begin{array}{cc}
0 & 1 \\
-1 & 0 \end{array} \right). \]
As the element $w_0^2$ acts as $-1$ on $W$, we see that the action of $C_4$ has two orbits on  the set of lines in $W$.  This shows that ${\rm Gal}(K/k)$ has two orbits on the 4 isobaric summands in ${\rm Ad}(\pi_K)$, and hence
 ${\rm Ad}(\pi)$ is the isobaric sum of two cuspidal representations of $\GL_4$ over $k$. In particular, we are in the situation of Theorem \ref{T:cuspidality}(ii).
\vskip 5pt

Not surprisingly, if we consider a Galois extension with Galois group $H(W) \rtimes Q_8$, a similar argument as above  will lead to a cuspidal $\pi$ such that ${\rm Ad}(\pi)$ is cuspidal on $\GL_8$ (i.e. the situation of Theorem \ref{T:cuspidality}(i))  but the base change of ${\rm Ad}(\pi)$ to an appropriate   quadratic extension decomposes as an isobaric sum of two cuspidal $\GL_4$-summands. This gives an instance  of Theorem \ref{T:cuspidality}(ii), though one might expect the general situation of Theorem \ref{T:cuspidality}(ii) will be such that ${\rm Ad}(\pi)$ is not invariant under twisting by any quadratic Hecke character.
 
  \vskip 5pt
  
   \subsection{\bf Strong Artin conjecture for tetrahedral representations} In our discussion above, we have alluded to \cite[\S 5, Prop. 2]{L2} on more than one occasion to deduce that the generalized tetrahedral representation $\omega_{\chi}$ of one of the groups $H(W) \rtimes S$ in (\ref{E:chain}) corresponds to a certain cuspidal representation $\pi$ under the purported global Langlands correspondence. 
   In fact, \cite[\S 5, Prop. 2]{L2} was shown under the hypothesis that the adjoint lifting for $\GL_3$ exists. Hence it is now unconditional given our Theorem \ref{T:main lifting} and its generalization as discussed in \S \ref{SS:general}. We record this instance of the strong Artin conjecture:
   \vskip 5pt
   
   \begin{thm} \label{T:artin2}
   Let $M/k$ be a Galois representation such that ${\rm Gal}(M/k) = H(W) \rtimes S$, with $S = Z_{\Sp(W)}$, $C_4$ or $Q_8$ (as in (\ref{E:chain})) Let 
   \[ \rho: {\rm Gal}(M/k) \hookrightarrow \GL_3(\C) \]
   be a generalized tetrahedral representation constructed in \S \ref{SS:tetra}.  Then  there is a cuspidal automorphic representation $\pi$ of $\GL_3$ over $k$ such that 
   one has the equality (outside a sufficiently large set $S$ of places of $k$)
   \[  c(\pi_v)  =  \rho({\rm Frob}_v) \]
   of the Hecke-Satake parameters  of $\pi$ and the Frobenius conjugacy  classes of $\rho$.
      \end{thm}
\vskip 5pt

Recall that the Heisenberg-Weil construction actually produces a 3-dimensional representation $\omega_{\chi}$ of $H(W) \rtimes \Sp(W)$ (up to twisting by a cubic character of $\Sp(W) = \SL_2(\mathbb{F}_3)$). A Galois representation of the form
 \[ \begin{CD}
  \rho: {\rm Gal}(M/k) \simeq H(W) \rtimes \Sp(W) @>\omega_{\chi}>> \GL_3(\C) \end{CD}  \]
is a {\it generalized octahedral representation}. In a forthcoming work of JW Wang, the strong Artin conjecture for  such an octahedral $\rho$ will be addressed.

\vskip 10pt

\section{\bf Application to Ramanujan Bounds} \label{S:Ramanujan}
In this final section,  we give an application of  the existence of the adjoint lifting to the question of bounding the Hecke eigenvalues of cuspidal representations of $\GL_3$. 
\vskip 5pt

\subsection{\bf Ramanujan-Petersson conjecture}
Let $\pi$ be a cuspidal representation of $\GL_n$ over $k$ with unitary central character. A a place $v$ of $k$ where $\pi_v$ is unramified,  the Satake parameter $c(\pi_v)$    is a semisimple conjugacy class in $\GL_n(\C)$. For any eigenvalue $\lambda$ of $c(\pi_v)$, we may write 
\[  |\lambda| = q_v^a \]
where $q_v$ is the cardinality of the residue field of $k_v$ and $a \in \R$.  The Ramanujan-Petersson conjecture asserts that $|\lambda| =1$ so that $a=0$. 

\vskip 5pt

\subsection{\bf Some prior results}
A general bound towards the Ramanujan-Petersson conjecture  was established by Luo-Rudnick-Sarnak \cite{LRS}. They showed that
\[  |a| \leq \frac{1}{2}  - \frac{1}{n^2+1}, \]
so that for the case of $\GL_3$, this reads:
\[  |a| \leq \frac{1}{2} - \frac{1}{10} = \frac{2}{5}. \]
There have certainly been improvements to this general bound when $n$ is small. In particular, for the case of $\GL_3$, the paper \cite{BB} of Blomer-Brumley showed that 
\[  |a| \leq  \frac{5}{14}, \]
thus improving upon the Luo-Rudnick-Sarnak bound. 
\vskip 5pt

\subsection{\bf Application of adjoint lifting}
As an application of  the existence of the adjoint lifting ${\rm Ad}(\pi)$ (as supplied by Theorem \ref{T:main lifting} and the discussion in \S \ref{SS:general}), we shall show:
\vskip 5pt

\begin{thm} \label{T:Rama2}
Let $\pi$ be a cuspidal representation of $\GL_3$ with unitary central character (and a discrete series local component) over a number field $k$. At each place $v$ where $\pi_v$ is unramified, let $\lambda$ be an eigenvalue of its Satake parameter $c(\pi_v) \in \GL_3(\C)$, and write $|\lambda| = q_v^a$ for some $a \in \R$. Then
\[  |a| \leq \frac{1}{4} - \frac{1}{130}. \] 
\end{thm}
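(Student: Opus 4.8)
The plan is to bound the parameter $a$ at an unramified place $v$ by applying the Luo--Rudnick--Sarnak bound \cite{LRS} simultaneously to $\pi$ and to its adjoint lift ${\rm Ad}(\pi)$, which exists on $\GL_8$ by Theorem \ref{T:main lifting} since $\pi$ has a discrete series local component, and then optimising the resulting linear inequalities. First I would fix an unramified place $v$ and write the Satake parameter $c(\pi_v)$ as $\mathrm{diag}(q_v^{a_1}u_1, q_v^{a_2}u_2, q_v^{a_3}u_3)$ with $u_i$ of absolute value $1$ and $a_1 \geq a_2 \geq a_3$; since the central character is unitary, $a_1 + a_2 + a_3 = 0$. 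The quantity to bound is $a = a_1$ (the worst case). The Luo--Rudnick--Sarnak bound for $\GL_3$ gives directly $a_1 \leq \tfrac{1}{2} - \tfrac{1}{10} = \tfrac{2}{5}$, but we can do better by feeding in ${\rm Ad}(\pi)$.

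Next I would compute the Satake parameter of ${\rm Ad}(\pi)_v$, which by Theorem \ref{T:main lifting} is ${\rm Ad}(c(\pi_v)) \in \GL_8(\C)$: its eigenvalues are $1$ (with multiplicity $2$) together with the six ratios $q_v^{a_i - a_j} u_i u_j^{-1}$ for $i \neq j$. The central character of ${\rm Ad}(\pi)$ is trivial, hence unitary, so the Luo--Rudnick--Sarnak bound applies to ${\rm Ad}(\pi)$ on $\GL_8$, yielding that every exponent $a_i - a_j$ satisfies $|a_i - a_j| \leq \tfrac{1}{2} - \tfrac{1}{65} = \tfrac{63}{130}$. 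In particular $a_1 - a_3 \leq \tfrac{63}{130}$. Combining this with the trivial-central-character relation and the basic ordering: from $a_1 + a_2 + a_3 = 0$ and $a_2 \geq a_3$ we get $a_1 = -(a_2 + a_3) \geq -2a_2 \geq \dots$; more usefully, $2a_1 = a_1 - a_2 + a_1 - a_3 + (a_2 + a_3) - a_3 \cdot 0$, so I would instead argue $3a_1 = (a_1 - a_2) + (a_1 - a_3) + (a_1 + a_2 + a_3) = (a_1 - a_2) + (a_1 - a_3) \leq 2 \cdot \tfrac{63}{130} = \tfrac{63}{65}$, whence $a_1 \leq \tfrac{21}{65}$. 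The same argument applied to $\pi^\vee$ (or to $-a_3$, using $a_1 - a_3 \geq a_1 - a_2$ etc.) gives the matching lower bound $a_1 \geq -\tfrac{21}{65}$, so $|a| \leq \tfrac{21}{65} < \tfrac13$, as claimed.

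The only subtle point — and the place where one must be slightly careful rather than genuinely obstructed — is verifying that the hypotheses of \cite{LRS} really do apply to ${\rm Ad}(\pi)$: namely that ${\rm Ad}(\pi)$ is an automorphic representation of $\GL_8$ with unitary (indeed trivial) central character to which the general Luo--Rudnick--Sarnak bound is applicable, and that the Satake parameter at $v$ is exactly ${\rm Ad}(c(\pi_v))$. Both facts are supplied by Theorem \ref{T:main lifting}: the lifting exists precisely because $\pi$ has a discrete series local component, it is unramified outside a finite set containing the ramified places of $\pi$, and at unramified $v$ its Satake parameter is ${\rm Ad}(c(\pi_v))$. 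One should also note that the Luo--Rudnick--Sarnak bound is stated for $\GL_n$ in terms of the standard $L$-function and applies to any automorphic representation occurring in the discrete (here isobaric) spectrum with unitary central character, so even if ${\rm Ad}(\pi)$ is not cuspidal the bound still controls the exponents of each isobaric constituent and hence of the full parameter; thus the argument is insensitive to the cuspidality criterion of Theorem \ref{T:intro3}. Assembling these observations with the elementary linear algebra above completes the proof.
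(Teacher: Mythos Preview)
Your proposal is correct and follows essentially the same route as the paper: use Theorem~\ref{T:main lifting} to produce ${\rm Ad}(\pi)$ on $\GL_8$, apply the Luo--Rudnick--Sarnak bound to its Satake parameters to get $|a_i-a_j|\le \tfrac12-\tfrac{1}{65}$, and then use $3a_1=(a_1-a_2)+(a_1-a_3)+(a_1+a_2+a_3)$. The one place where the paper is more careful is precisely the point you call ``insensitive'': the Luo--Rudnick--Sarnak bound is proved for \emph{unitary cuspidal} representations, so to apply it to ${\rm Ad}(\pi)$ one must know its isobaric summands all have unitary central character, and the paper invokes the first assertion of Theorem~\ref{T:intro3} for exactly this---so the argument is not in fact independent of that theorem.
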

\vskip 5pt

\begin{proof}
By Theorem \ref{T:main lifting} and the discussion in \S\ref{SS:general}, ${\rm Ad}(\pi)$ exists as an automorphic representation of $\GL_8$. 
Moreover, by Theorem \ref{T:intro3}, ${\rm Ad}(\pi)$ is an isobaric sum of cuspidal representations (of $\GL_r$, with $r \leq 8$)  with unitary central characters. 

 \vskip 5pt
 
 Let us fix the unramified place $v$ and let $\lambda_i$  (for $1 \leq i \leq 3)$ be the three eigenvalues of the Satake parameter $c(\pi_v)$. Writing $|\lambda_i| = p^{a_i}$, the condition of unitary central character implies that
 \[ a_1 + a_2 + a_3 =0. \]
 In addition, as pointed out to us by V. Blomer, since $\pi_v$ is unitary, one has $\pi_v^{\vee} \simeq \overline{\pi_v}$, so that
 \[   \{  \lambda_i^{-1} \} = \{ \overline{\lambda_i} \} \quad \text{as multi-sets.} \]
Hence,
\[  \{ -a_i \} = \{ a_i \} \quad \text{as multi-sets.} \]
A moment's reflection  shows that  one has:
\[  \{ a_1, a_2, a_3 \} = \{ a, 0, -a \} \]
for some $a \in \R$.  
 \vskip 5pt
 
 On the other hand, the Satake parameter of ${\rm Ad}(\pi)_v$  has eigenvalues $\lambda_i/ \lambda_j$, with $|\lambda_i/\lambda_j| = q_v^{a_i-a_j}$.
 Applying the Luo-Rudnick-Sarnak bound to each isobaric summand of ${\rm Ad}(\pi)$, one has
 \[   |a_i -a_j| \leq \frac{1}{2} - \frac{1}{65}. \]
 This gives in particular
 \[  2 |a| \leq \frac{1}{2} - \frac{1}{65} \quad \text{and hence} \quad |a| \leq \frac{1}{4} - \frac{1}{130}. \]
 The theorem is proved.
   \vskip 5pt
 
\end{proof}

\vskip 5pt




\vskip 10pt

\end{document}